\documentclass[sn-mathphys,Numbered]{sn-jnl}


\usepackage{graphicx}%
\usepackage{multirow}%
\usepackage{amsmath,amssymb,amsfonts}%
\usepackage{amsthm}%
\usepackage{mathrsfs}%
\usepackage[title]{appendix}%
\usepackage{xcolor}%
\usepackage{textcomp}%
\usepackage{manyfoot}%
\usepackage{booktabs}%
\usepackage{algpseudocode}%
\usepackage{listings}%
\usepackage{amsmath}
\usepackage{cases}%




\newtheorem{theorem}{Theorem}
\newtheorem{proposition}[theorem]{Proposition}%

\newtheorem{example}{Example}%
\newtheorem{lemma}[theorem]{Lemma}
\raggedbottom

\begin{document}


\title{Values and recurrence relations for integrals of powers of arctan and logarithm and associated Euler-like sums}

\author[1]{\fnm{Xiaoyu} \sur{Liu}}\email{xiaoyuliu1999@hotmail.com}
\equalcont{These authors contributed equally to this work.}
\author*[2]{\fnm{Xinhua} \sur{Xiong}}\email{xinhuax@foxmail.com}

\affil*[1,2]{\orgdiv{Department of Mathematics \& Three Gorges Mathematics Research Center}, \orgname{China Three Gorges University}, \orgaddress{\street{Daxue Road}, \city{Hubei}, \postcode{443002}, \state{Yichang}, \country{China}}}


\abstract{
In this paper, we give evaluations of integrals involving the arctan and the logarithm functions, and present several new summation identities for odd harmonic numbers and  Milgram constants.  These summation identities can be  expressed  as  finite sums of special constants such as $\pi$, the Catalan constant, the values of Riemann zeta function at the positive odd numbers and $\ln2$ etc.. Some examples are detailed  to illustrate the theorems.}
\keywords{Integral representation, Logarithmic integral, Arctan integral, Milgram constant, Euler-like sum, Harmonic number, Polygamma function}

\maketitle
\section{Introduction}
\label{intro}
Recently, there has been extensive research conducted by numerous scholars on the representations of integrals that involve various special functions. For example,  the authors \cite{EG19,EG17} demonstrated a connection between the values of integrals involving the log-tangent function and the Riemann zeta function. Additionally, the authors \cite{JC,CH,XYS,FP,LBJ} used integrals to evaluate different Euler sums,
while the authors \cite{SN20} used integrals involving the arctan and the logarithm functions to represent Euler-like sums as  finite closed forms of special constants.
 In \cite{SN20}, the integral
 $$
 X(a, \delta,p,q)=\int_{0}^{1} x^{a}\ln^{q}(x)(\arctan (\delta x))^{p}dx
 $$
 was considered and the results can be used to show
 \begin{align*}
 X(1,1,2,2)=&\int_{0}^{1}x^{2}\ln^{2}(x)(\arctan(x))^{2}dx=
 \sum_{n\geq1}\frac{(-1)^{n+1}h_{n}}{n(n+1)^{3}}\\
 =&\frac{1}{2}\pi G+\frac{151}{64}\zeta(4)-\frac{7}{4}\zeta(3)\ln2 +\frac{1}{2}\zeta(2)\ln^{2}2
 -2Li_{4}\left(\frac{1}{2}\right)
+\frac{9}{8}\zeta(2)\\
 &+\frac{7}{2}\ln2-\frac{1}{2}\zeta(3)-\frac{3}{4}\pi
 -\frac{1}{12}\ln^{4}2,
 \\
 X(-2, 1, 5,0)=&\int_{0}^{1}\frac{1}{x^{2}}\arctan^{5}(x)dx=\frac{3}{2}
 \sum_{n\geq1}\sum_{j\geq1}\sum_{k=1}^{j}\frac{(-1)^{n+j}h_{n}h_{k}}{n(2j+1)(n+j)k}\\
 =&\frac{5}{32}\pi^{3}G-\frac{\pi^{5}}{1024}-\frac{135}{256}\zeta(2)\zeta(3)-
 \frac{7905}{1024}\zeta(5)+\frac{225}{256}\zeta(4)\ln2-\frac{15\pi}{4}\beta(4).
 \end{align*}
Where $G$ is the Catalan Constant and $\zeta(n)$ is the value of the Riemann zeta function at $z=n$, and\[H_{n}=\sum_{k=1}^{n}\frac{1}{k}, h_{n}=H_{2n}-\frac{1}{2}H_{n}=\sum_{k=1}^{n}\frac{1}{2k-1}.\]
In this note, we investigate integrals involving arctan and logarithmic functions  in a more complex form.  Define
\[I(a,p,q,r)=\int_{0}^{1}\frac{x^{a-p}\ln^{q}(x)(\arctan (x))^{p}}{(1+x^{2})^{r}}dx,\]
where $a, p, q\in  \mathbb{N}, a \geq p$. We provide recurrence relations for  values of  the integrals, and  finite closed forms are given for some special values of $a,p,q,$ and $r$.  As applications,  many new Euler-like sums involving $h_{n}$  and the Milgram constants are derived.  For example, in Section 3, we prove
\begin{align*}
\sum_{n=1}^{\infty}\frac{(-1)^{n+1}h_{n}}{n(2n+4)}=&
\sum_{n=0}^{\infty}\frac{(-1)^{n}t_{n}(1)}{2(n+1)(n+2)}
=\frac{1}{12}(1+\pi-4\ln 2),\\
\sum_{n=1}^{\infty}\frac{(-1)^{n}h_{n}}{n(2n+4)^{2}}=&
\sum_{n=0}^{\infty}\frac{(-1)^{n+1}t_{n}(1)}{(n+1)(2n+2)^{2}}\\
=&\frac{1}{288}(-26+2\pi(-19+18G+2\pi)+104\ln 2-63\zeta (3)).
\end{align*}
We introduce some notations that are used in the subsequent sections.
The Milgram constant $t_{k}(n)$ is defined in  \cite{MA} as
\[t_{k}(n)=\sum_{m_{n}=0}^{k}\frac{1}{(m_{n}+\frac{n}{2})}
\sum_{m_{n-1}=0}^{m_{n}}\frac{1}{(m_{n-1}+\frac{n-1}{2})}\cdot\cdot\cdot
\sum_{m_{2}=0}^{m_{3}}\frac{1}{(m_{2}+1)}\sum_{m_{1}=0}^{m_{2}}\frac{1}{(m_{1}+\frac{1}{2})}.\]
The Hurwitz  zeta  function is defined by
	\[ \zeta (p,a)=\sum_{n\geq0}\frac{1}{(n+a)^{p}}, (\Re p >1 ).\]
Moreover, the Hurwitz  zeta function has the following relation with the polygamma function,
$$\psi^{(n)}(z)=(-1)^{n+1}n!\sum_{k=0}^{\infty}\frac{1}{(z+k)^{n+1}}=(-1)^{n+1}n!\zeta (n+1,z).$$
For $  |z|\leq1$, the polylogarithm function is defined by \[Li_{n}(z)=\sum_{k=1}^{\infty}\frac{z^{k}}{k^{n}},n\in  N\setminus\{1\}.\]
 When $z = \frac{1+i}{2}$, from \cite{BN},
\begin{align*}
W(3):=&\Im\left(Li_{3}\left(\frac{1\pm i}{2}\right)\right)
=\sum_{n\geq1}\frac{\sin(\frac{n\pi}{4})}{2^{\frac{n}{2}}n^{3}}\\
=&\sum_{n\geq1}\frac{(-1)^{n+1}}{2^{2n}}\left(\frac{2}{(4n-3)^{3}} +\frac{2}{(4n-2)^{3}}+\frac{1}{(4n-1)^{3}}  \right).
\end{align*}
\\ The Catalan Constant $ G=\beta(2)$ is a special case of the Dirichlet beta function \cite{IM},  where
\[\beta(z)=\sum_{n=1}^{\infty}\frac{(-1)^{n+1}}{(2n-1)^{z}},\Re(z)>0.\]
 From \cite{KK},  we note that
\[\psi^{(2q-1)}\left(\frac{1}{4}\right)-\psi^{(2q-1)}\left(\frac{3}{4}\right)=2^{4q}(2q-1)!\beta(2q).\]

\section{ Analysis of  $ I(a,p,q,r)$}

\begin{theorem}\label{CX1.1}
Let $ a,p,q,r $ be non-negative integers and $a\geq1$ , then
\begin{align}\label{JF1}
I(a,p,q,r)=&\int_{0}^{1}\frac{x^{a}\ln^{q}(x)(\frac{\arctan x}{x})^{p}}{(1+x^{2})^{r}}dx
=\frac{(-1)^{q}q!\Gamma(p+1)}{2^{p}}\nonumber\\
&\times\sum_{k=0}^{\infty}\frac{(-1)^{k}\Gamma(r+k)}{\Gamma(k+1)\Gamma(r)}
\sum_{n=0}^{\infty}\frac{(-1)^{n}t_{n}(p-1)}{(n+\frac{p}{2})(a+2k+2n+1)^{q+1}}.
\end{align}
\end{theorem}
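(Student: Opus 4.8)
The plan is to expand the two transcendental factors of the integrand, $(1+x^{2})^{-r}$ and $(\arctan x/x)^{p}$, as power series in $x^{2}$, take their Cauchy product against $x^{a}\ln^{q}x$, and integrate term by term; the statement then falls out after collecting constants. Note first that the integrand in \eqref{JF1} is exactly the one defining $I(a,p,q,r)$, since $x^{a-p}(\arctan x)^{p}=x^{a}(\arctan x/x)^{p}$.

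Two expansions are needed. The easy one is the binomial series
\[
\frac{1}{(1+x^{2})^{r}}=\sum_{k=0}^{\infty}(-1)^{k}\binom{r+k-1}{k}x^{2k}
=\sum_{k=0}^{\infty}\frac{(-1)^{k}\Gamma(r+k)}{\Gamma(k+1)\Gamma(r)}\,x^{2k},\qquad 0\le x<1,
\]
which supplies the $k$-indexed factor of \eqref{JF1} verbatim. The real content is the lemma
\[
\left(\frac{\arctan x}{x}\right)^{p}=\frac{\Gamma(p+1)}{2^{p}}\sum_{n=0}^{\infty}\frac{(-1)^{n}\,t_{n}(p-1)}{\,n+\tfrac{p}{2}\,}\,x^{2n},\qquad 0\le x\le1,
\]
which I would prove by induction on $p\ge1$. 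For $p=1$ it is just $\arctan x/x=\sum_{n\ge0}(-1)^{n}x^{2n}/(2n+1)$, using $t_{n}(0)=1$ (the empty nested sum). For the step, write $(\arctan x/x)^{p}=\sum_{n\ge0}b_{n}^{(p)}x^{2n}$; differentiating $(\arctan x)^{p}=x^{p}\sum_{n}b_{n}^{(p)}x^{2n}$ and feeding in the identity $(1+x^{2})\frac{d}{dx}(\arctan x)^{p}=p(\arctan x)^{p-1}$, then comparing coefficients of $x^{2n}$, yields the three-term recurrence $(2n+p)b_{n}^{(p)}+(2n+p-2)b_{n-1}^{(p)}=p\,b_{n}^{(p-1)}$ with $b_{0}^{(p)}=1$ and $b_{-1}^{(p)}=0$. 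Substituting the conjectured value $b_{n}^{(p)}=\frac{\Gamma(p+1)}{2^{p}}\cdot\frac{(-1)^{n}t_{n}(p-1)}{n+p/2}$ and cancelling common factors collapses this to
\[
t_{n}(p-1)=t_{n-1}(p-1)+\frac{t_{n}(p-2)}{\,n+\tfrac{p-1}{2}\,},
\]
which is immediate from the definition of the Milgram constant: stripping the outermost summation in $t_{n}(p-1)$ gives $t_{n}(p-1)=\sum_{m=0}^{n}t_{m}(p-2)/(m+\tfrac{p-1}{2})$, and subtracting the same identity at $n-1$ is exactly the displayed recurrence. This closes the induction; the degenerate case $p=0$ (with the corresponding convention for $t_{n}(-1)$) reduces to the elementary integral $\int_{0}^{1}x^{a}\ln^{q}x\,(1+x^{2})^{-r}\,dx$.

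With both series in hand, I would multiply them, multiply by $x^{a}\ln^{q}x$, and integrate over $[0,1]$ using $\int_{0}^{1}x^{m}\ln^{q}x\,dx=(-1)^{q}q!/(m+1)^{q+1}$ with $m=a+2k+2n$; pulling out $\Gamma(p+1)/2^{p}$ and $(-1)^{q}q!$ produces \eqref{JF1}. The step I expect to be the main obstacle is the rigorous justification of term-by-term integration up to the endpoint $x=1$, where the binomial series for $(1+x^{2})^{-r}$ fails to converge. The clean route: the $\arctan$-series above converges uniformly on all of $[0,1]$ (Leibniz test, since $t_{n}(p-1)/(n+\tfrac p2)$ decreases to $0$), so after integrating it term by term one is left with the one-dimensional integrals $J_{n}=\int_{0}^{1}x^{a+2n}\ln^{q}x\,(1+x^{2})^{-r}\,dx$; writing $J_{n}=\lim_{\delta\to0}\int_{0}^{1-\delta}$ and expanding $(1+x^{2})^{-r}$ on $[0,1-\delta]$, where it converges uniformly, gives $J_{n}=(-1)^{q}q!\sum_{k\ge0}(-1)^{k}\binom{r+k-1}{k}/(a+2n+2k+1)^{q+1}$, the passage $\delta\to0$ being justified by dominated convergence when the $k$-series converges absolutely (e.g.\ when $q\ge r$) and by an Abel-summation estimate on the alternating tail otherwise. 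Feeding these $J_{n}$ back into the $n$-sum gives \eqref{JF1}.
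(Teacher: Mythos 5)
Your proposal follows essentially the same route as the paper: the paper's proof consists precisely of quoting Milgram's expansion of $(\arctan x/x)^{p}$ from \cite{MA}, writing the binomial series for $(1+x^{2})^{-r}$, multiplying the factors and integrating term by term with $\int_{0}^{1}x^{m}\ln^{q}x\,dx=(-1)^{q}q!/(m+1)^{q+1}$. What you add is genuine, though. First, you give a self-contained inductive proof of the expansion $\left(\frac{\arctan x}{x}\right)^{p}=\frac{\Gamma(p+1)}{2^{p}}\sum_{n\ge0}\frac{(-1)^{n}t_{n}(p-1)}{n+p/2}x^{2n}$ via $(1+x^{2})\frac{d}{dx}(\arctan x)^{p}=p(\arctan x)^{p-1}$ and the telescoping relation $t_{n}(p-1)-t_{n-1}(p-1)=t_{n}(p-2)/(n+\tfrac{p-1}{2})$; this is correct (to close it one should also record $t_{0}(p-1)=2^{p-1}/(p-1)!$, so that the conjectured coefficient at $n=0$ is indeed $1$), and it replaces the paper's citation. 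Second, you try to justify the interchange of summation and integration, which the paper does not address at all; your first interchange, using uniform convergence of the arctan series on $[0,1]$ against the integrable weight $x^{a}\ln^{q}x\,(1+x^{2})^{-r}$, is fine.

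The caveat concerns your treatment of $J_{n}=\int_{0}^{1}x^{a+2n}\ln^{q}x\,(1+x^{2})^{-r}dx$. The $k$-th term of the series you assign to $J_{n}$ has size $\sim k^{r-1}/k^{q+1}=k^{r-q-2}$, so it converges absolutely for $q\ge r$, conditionally for $q=r-1$ (where your summation-by-parts estimate works), but diverges outright for $r\ge q+2$: no Abel-type tail estimate can yield classical convergence there. Note, however, that in that same regime the outer $k$-terms of the theorem's own iterated sum do not tend to zero either (the inner $n$-sum behaves like $c\,(2k)^{-(q+1)}$ with $c=(\pi/2)^{p}/p!\ne0$), so the stated identity can only be read in an Abel-summability sense for $r\ge q+2$; this defect is inherited from the paper's purely formal proof rather than introduced by you. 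A small further point: your derivation produces the $n$-sum outside and the $k$-sum inside, the reverse of the stated order; for $q\ge r$ Fubini makes this immaterial, while in the boundary case $q=r-1$ the reordering needs one more routine estimate. Apart from these convergence caveats, your argument is correct and, where it applies, more rigorous than the paper's.
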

\begin{proof}From \cite{MA},
\[\left(\frac{\arctan x}{x}\right)^{p}=\frac{\Gamma(p+1)}{2^{p}}
\sum_{n=0}^{\infty}\frac{(-x^{2})^{n}t_{n}(p-1)}{n+\frac{p}{2}},\]
\vspace{-0.3cm}
\[t_{n}(p-1)=\sum_{k_{p-1}=0}^{n}\frac{1}{k_{p-1}+\frac{p-1}{2}}
\sum_{k_{p-2}=0}^{k_{p-1}}\frac{1}{k_{p-2}+\frac{p-2}{2}}\cdot\cdot\cdot
\sum_{k_{2}=0}^{k_{3}}\frac{1}{k_{2}+1}\sum_{k_{1}=0}^{k_{2}}\frac{1}{k_{1}+\frac{1}{2}},\]
and \[\frac{1}{(1+x^{2})^{r}}=\sum_{k=0}^{\infty}\frac{-r(-r-1)\cdot\cdot\cdot(-r-k+1)}{k!}x^{2k}
=\sum_{k=0}^{\infty}\frac{(-1)^{k}\Gamma(r+k)}{\Gamma(k+1)\Gamma(r)}x^{2k},\]
then
\begin{align*}
I(a,p,q,r)=&\int_{0}^{1}\frac{x^{a}\ln^{q}(x)(\frac{\arctan x}{x})^{p}}{(1+x^{2})^{r}}dx=\int_{0}^{1}\frac{\Gamma(p+1)}{2^{p}}
\sum_{k=0}^{\infty}\frac{(-1)^{k}\Gamma(r+k)}{\Gamma(k+1)\Gamma(r)}
\\
&\times\sum_{n=0}^{\infty}\frac{(-1)^{n}t_{n}(p-1)}{(n+\frac{p}{2})} x^{2n+2k+a}\ln^{q}(x)dx=\frac{(-1)^{q}q!\Gamma(p+1)}{2^{p}}
\\&\times
\sum_{k=0}^{\infty}\frac{(-1)^{k}\Gamma(r+k)}{\Gamma(k+1)\Gamma(r)}
\sum_{n=0}^{\infty}\frac{(-1)^{n}t_{n}(p-1)}{(n+\frac{p}{2})(2n+2k+a+1)^{q+1}}.
\end{align*}\end{proof}
In the following of this section, we first deal with the cases of $r = 0$ and $1$, showing  closed forms for $I(a,p,q,r)$ with small $p$ and $q$.  When $r > 1$, the recurrence relations of the integral and its initial values are given.

\begin{lemma}\label{PTH1}
For $ k\in \mathbb{ N}, $ then
\begin{align}
 I(2k+2,2,0,0)=&\int_{0}^{1}x^{2k}(\arctan (x))^2dx=\frac{1}{2(2k+1)}\left[ \frac{\pi^{2}}{8}-\sum_{j=1}^{k}(-1)^{j+1}\right.\nonumber
\\&\times\frac{\pi+H_{\frac{2k-2j-1}{4}}-H_{\frac{2k-2j+1}{4}}}{2k-2j+2}
\left.-(-1)^{k}2G+\frac{(-1)^{k}\pi\ln2}{2}\right]\label{EQPTH1.1},
\\
I(2k+3,2,0,0)=&\int_{0}^{1}x^{2k+1}(\arctan (x))^2dx =\frac{1}{4(k+1)}\left[ \frac{\pi^{2}}{8}-\sum_{j=1}^{k+1}(-1)^{j+1}\right.\nonumber \\
&\times\left.\frac{\pi+H_{\frac{k-j}{2}}-H_{\frac{k-j+1}{2}}}{2k-2j+3}
-\frac{\pi^{2}(-1)^{k+1}}{8}\right]\label{EQPTH1.2}.
\end{align}
\end{lemma}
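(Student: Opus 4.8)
The plan is to evaluate the integrals $\int_0^1 x^{2k}(\arctan x)^2\,dx$ and $\int_0^1 x^{2k+1}(\arctan x)^2\,dx$ by integration by parts, reducing each to an integral of $\arctan x$ against a rational function, and then resolving the resulting rational integrals by partial fractions. First I would write $\int_0^1 x^{m}(\arctan x)^2\,dx = \frac{x^{m+1}}{m+1}(\arctan x)^2\big|_0^1 - \frac{2}{m+1}\int_0^1 \frac{x^{m+1}\arctan x}{1+x^2}\,dx = \frac{\pi^2}{16(m+1)} - \frac{2}{m+1}\int_0^1 \frac{x^{m+1}\arctan x}{1+x^2}\,dx$. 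So the problem reduces to computing $J_m := \int_0^1 \frac{x^{m+1}\arctan x}{1+x^2}\,dx$ for $m = 2k$ and $m = 2k+1$.

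For $J_m$, the idea is to use the polynomial division $\frac{x^{m+1}}{1+x^2} = Q(x) + \frac{\varepsilon x + \delta}{1+x^2}$ where $Q$ is a polynomial of degree $m-1$ with the explicit coefficients coming from the geometric-type expansion, and $\varepsilon x + \delta$ is the remainder (so $\delta=0$ and $\varepsilon=(-1)^{k}$ or $\varepsilon=0$ and $\delta = (-1)^{k+1}$ depending on parity of $m+1$). The polynomial part contributes $\int_0^1 Q(x)\arctan x\,dx$, which after another integration by parts becomes a sum of elementary terms; the $\frac{x}{1+x^2}$ piece gives $\int_0^1 \frac{x\arctan x}{1+x^2}\,dx$, and the $\frac{1}{1+x^2}$ piece gives $\int_0^1 \frac{\arctan x}{1+x^2}\,dx = \frac{(\arctan x)^2}{2}\big|_0^1 = \frac{\pi^2}{32}$. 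The key auxiliary evaluations are $\int_0^1 \frac{x\arctan x}{1+x^2}\,dx = \frac{\pi\ln 2}{8} - \frac{G}{2} + \tfrac{?}{}$ (obtained via $\int_0^1 \arctan x\, d(\tfrac12\ln(1+x^2))$ and then the known $\int_0^1 \frac{\ln(1+x^2)}{1+x^2}\,dx = \frac{\pi\ln 2}{2} - G$), and the antiderivatives $\int x^{j}\arctan x\,dx$ which for $j$ of each parity produce the harmonic-number differences $H_{(k-j)/2}-H_{(k-j+1)/2}$ seen in the statement (these arise from $\int_0^1 \frac{x^{2\ell}}{1+x^2}dx$ and $\int_0^1 \frac{x^{2\ell+1}}{1+x^2}dx$, which telescope into such half-integer harmonic differences together with a $\pi/4$ or $\frac{\ln 2}{2}$ tail).

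Concretely, for \eqref{EQPTH1.1} with $m=2k$: after integration by parts we need $J_{2k} = \int_0^1 \frac{x^{2k+1}\arctan x}{1+x^2}\,dx$; dividing, $\frac{x^{2k+1}}{1+x^2} = \sum_{j=1}^{k}(-1)^{j+1}x^{2k-2j+1} + \frac{(-1)^{k}x}{1+x^2}$, so $J_{2k} = \sum_{j=1}^{k}(-1)^{j+1}\int_0^1 x^{2k-2j+1}\arctan x\,dx + (-1)^{k}\int_0^1 \frac{x\arctan x}{1+x^2}\,dx$. Each $\int_0^1 x^{2\ell+1}\arctan x\,dx$ is evaluated by parts as $\frac{1}{2\ell+2}\big[\frac{\pi}{4} - \int_0^1 \frac{x^{2\ell+2}}{1+x^2}dx\big]$, and $\int_0^1\frac{x^{2\ell+2}}{1+x^2}dx = \int_0^1\big(\sum_{i=0}^{\ell}(-1)^i x^{2(\ell-i)} + \frac{(-1)^{\ell+1}}{1+x^2}\big)dx$, whose elementary sum collapses (using $\sum \frac{(-1)^i}{2(\ell-i)+1}$) precisely to a difference of the form $\tfrac14\big(H_{(2\ell-1)/4}-H_{(2\ell+1)/4}\big)$ plus $(-1)^{\ell+1}\frac{\pi}{4}$; matching indices $\ell \leftrightarrow k-j$ reproduces the stated formula. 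The case \eqref{EQPTH1.2} with $m=2k+1$ is parallel: now the remainder after division is $\frac{(-1)^{k+1}}{1+x^2}$ (no $x$ term), so $J_{2k+1} = \sum_{j=1}^{k+1}(-1)^{j+1}\int_0^1 x^{2k-2j+2}\arctan x\,dx + (-1)^{k+1}\cdot\frac{\pi^2}{32}$, and the integrals $\int_0^1 x^{2\ell}\arctan x\,dx$ are handled the same way, yielding the $H_{(k-j)/2}-H_{(k-j+1)/2}$ terms and the $\frac{\pi^2(-1)^{k+1}}{8}$ tail after dividing through by $4(k+1)$.

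The main obstacle will be bookkeeping: getting the telescoping of $\int_0^1 \frac{x^{2\ell}}{1+x^2}\,dx$ into exactly the half-integer harmonic-number differences written in the lemma (the indices $\frac{2k-2j-1}{4}$ vs. $\frac{2k-2j+1}{4}$ and the parity-dependent shift), and correctly tracking all the alternating signs $(-1)^{j+1}$ and the overall factors $\frac{1}{2(2k+1)}$ and $\frac{1}{4(k+1)}$ coming from the two successive integrations by parts. The actual integral identities involved ($\int_0^1 \frac{\arctan x}{1+x^2}dx = \frac{\pi^2}{32}$, $\int_0^1 \frac{\ln(1+x^2)}{1+x^2}dx = \frac{\pi\ln 2}{2}-G$, and elementary power integrals) are standard, so once the indexing is pinned down the rest is a routine if lengthy verification.
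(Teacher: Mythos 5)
Your proposal follows essentially the same route as the paper: integrate by parts once, split $\frac{x^{m+1}}{1+x^2}$ into a polynomial plus the parity-dependent remainder $\frac{(-1)^k x}{1+x^2}$ or $\frac{(-1)^{k+1}}{1+x^2}$, and finish with $\int_0^1\frac{\arctan x}{1+x^2}dx=\frac{\pi^2}{32}$, $\int_0^1\frac{x\arctan x}{1+x^2}dx=\frac18(4G-\pi\ln2)$, and the moment formula $\int_0^1 x^{m}\arctan x\,dx=\frac{\pi+H_{(m-2)/4}-H_{m/4}}{4(1+m)}$ (which the paper simply cites rather than re-derives as you sketch). The only blemish is your tentative value $\frac{\pi\ln2}{8}-\frac{G}{2}$ for the $x/(1+x^2)$ piece, which has the sign reversed; your own method (parts against $\tfrac12\ln(1+x^2)$) gives the correct $\frac{G}{2}-\frac{\pi\ln2}{8}$, so this is bookkeeping, not a gap.
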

\begin{proof}
Integrating by parts,
\begin{align*}
I(a+2,2,0,0)=&\int_{0}^{1}(\arctan (x))^{2}d\left(\frac{x^{a+1}}{a+1}\right)
\\
=&\frac{\pi^{2}}{16(a+1)}-\frac{2}{a+1}\int_{0}^{1}
\frac{x^{a+1}}{1+x^{2}}\arctan (x)dx.
\end{align*}
Since
\begin{align}\label{DXSCF}
\frac{x^{a+1}}{1+x^{2}}=
\begin{cases}
\sum_{j=1}^{k}(-1)^{j+1}x^{a+1-2j} +(-1)^{k}\frac{x}{1+x^{2}}, & a=2k,\\
 \sum_{j=1}^{k+1}(-1)^{j+1}x^{a+1-2j} +(-1)^{k+1}\frac{1}{1+x^{2}}, & a=2k+1,
\end{cases}
\end{align}
then
\begin{align*}\int_{0}^{1}\frac{x^{2k+1}\arctan (x)}{1+x^2}dx=&\int_{0}^{1}\sum_{j=1}^{k}(-1)^{j+1}x^{2k+1-2j}\arctan (x)dx\\&+\int_{0}^{1}\frac{(-1)^{k}x\arctan (x)}{1+x^{2}}dx,
	\\\int_{0}^{1}\frac{x^{2k+2}\arctan (x)}{1+x^2}dx=&\int_{0}^{1}\sum_{j=1}^{k+1}(-1)^{j+1}x^{2k+2-2j}\arctan (x)dx\\
&+\int_{0}^{1}\frac{(-1)^{k+1}\arctan (x)}{1+x^{2}}dx.\end{align*}
By the identity in \cite{SN19}
\[\int_{0}^{1}x^{m}\arctan (x)dx=\frac{\pi+H_{\frac{m-2}{4}}-H_{\frac{m}{4}}}{4(1+m)},\]
and
\[\int_{0}^{1}\frac{x\arctan (x)}{1+x^{2}}dx=\frac{1}{8}(4G-\pi\ln2),
	\int_{0}^{1}\frac{\arctan (x)}{1+x^{2}}dx=\frac{\pi^{2}}{32},\]
\eqref{EQPTH1.1} and \eqref{EQPTH1.2} are proved.
\end{proof}

\begin{lemma}\label{PTH2}
Let $ k\in\mathbb{N}, $ then
\begin{align}
I(2k+3,3,0,0)=&\int_{0}^{1}x^{2k}(\arctan (x))^3dx 	=\frac{\pi^{3}}{64(2k+1)}-\frac{3}{2k+1}\sum_{j=1}^{k}\frac{(-1)^{j+1}}{4(k+1-j)}
\nonumber\end{align}\begin{align}
&\times\left[ \frac{\pi^{2}}{8}-\sum_{j'=1}^{k+1-j}(-1)^{j'+1} \frac{\pi+H_{\frac{k-j-j'}{2}}-H_{\frac{k-j-j'+1}{2}}}{2k-2j-2j'+3}-(-1)^{k+1-j}\frac{\pi^{2}}{8}\right] \nonumber\\
&-\frac{(-1)^{k}3}{64(2k+1)}(16G\pi-\pi^{2}\ln4-21\zeta(3))\label{EQPTH2.1},
\end{align}\vspace{-0.5cm}\begin{align}
I(2k+4,3,0,0)=&\int_{0}^{1}x^{2k+1}(\arctan (x))^3dx=\frac{\pi^{3}}{128(k+1)}-\frac{3}{2k+2}\sum_{j=1}^{k+1}\frac{(-1)^{j+1}}{2(2k+3-2j)} \nonumber\\
&\times\left[ \frac{\pi^{2}}{8}-\sum_{j'=1}^{k+1-j}(-1)^{j'+1} \frac{\pi+H_{\frac{2k-2j-2j'+1}{4}}-H_{\frac{2k-2j-2j'+3}{4}}}{2k-2j-2j'+4} \right. \nonumber
\\
&\left.-(-1)^{k+1-j}2G+\frac{(-1)^{k+1-j}\pi\ln2}{2}\right]
-\frac{(-1)^{k+1}\pi^{3}}{128(k+1)}\label{EQPTH2.2}
.
\end{align}
\end{lemma}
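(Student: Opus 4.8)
The plan is to reuse, one degree higher, the mechanism of Lemma~\ref{PTH1}: strip one power of $\arctan x$ by integration by parts, and then divide out $1+x^{2}$ via \eqref{DXSCF} so that what remains is the already-known quadratic case together with a single explicit rational tail. Concretely, writing $I(2k+3,3,0,0)=\int_{0}^{1}(\arctan x)^{3}\,d\!\left(\frac{x^{2k+1}}{2k+1}\right)$ and $I(2k+4,3,0,0)=\int_{0}^{1}(\arctan x)^{3}\,d\!\left(\frac{x^{2k+2}}{2k+2}\right)$ and using $\arctan 1=\pi/4$, the boundary terms give $\frac{\pi^{3}}{64(2k+1)}$ and $\frac{\pi^{3}}{128(k+1)}$, while there remain $-\frac{3}{2k+1}\int_{0}^{1}\frac{x^{2k+1}(\arctan x)^{2}}{1+x^{2}}\,dx$ and $-\frac{3}{2k+2}\int_{0}^{1}\frac{x^{2k+2}(\arctan x)^{2}}{1+x^{2}}\,dx$; these already account for the leading constants of \eqref{EQPTH2.1} and \eqref{EQPTH2.2}.

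Next I would apply \eqref{DXSCF} with $a=2k$ to $x^{2k+1}/(1+x^{2})$ and with $a=2k+1$ to $x^{2k+2}/(1+x^{2})$. The first remaining integral then becomes $\sum_{j=1}^{k}(-1)^{j+1}\int_{0}^{1}x^{2(k-j)+1}(\arctan x)^{2}\,dx+(-1)^{k}\int_{0}^{1}\frac{x(\arctan x)^{2}}{1+x^{2}}\,dx$, and each term of the finite sum is $I(2(k-j)+3,2,0,0)$, given by \eqref{EQPTH1.2} with $k\mapsto k-j$. The second remaining integral becomes $\sum_{j=1}^{k+1}(-1)^{j+1}\int_{0}^{1}x^{2(k+1-j)}(\arctan x)^{2}\,dx+(-1)^{k+1}\int_{0}^{1}\frac{(\arctan x)^{2}}{1+x^{2}}\,dx$, and each term of the finite sum is $I(2(k+1-j)+2,2,0,0)$, given by \eqref{EQPTH1.1} with $k\mapsto k+1-j$. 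Inserting these closed forms and renaming the internal summation index as $j'$ reproduces exactly the double sums displayed in \eqref{EQPTH2.1} and \eqref{EQPTH2.2}.

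It then remains to evaluate the two ``tail'' integrals. The even one is immediate: $\int_{0}^{1}\frac{(\arctan x)^{2}}{1+x^{2}}\,dx=\left.\tfrac13(\arctan x)^{3}\right|_{0}^{1}=\frac{\pi^{3}}{192}$, and $-\frac{3}{2k+2}(-1)^{k+1}\cdot\frac{\pi^{3}}{192}=-\frac{(-1)^{k+1}\pi^{3}}{128(k+1)}$ is the closing term of \eqref{EQPTH2.2}. The odd one, $\int_{0}^{1}\frac{x(\arctan x)^{2}}{1+x^{2}}\,dx$, is the real work, exactly as $\int_{0}^{1}\frac{x\arctan x}{1+x^{2}}\,dx$ and $\int_{0}^{1}\frac{\arctan x}{1+x^{2}}\,dx$ had to be supplied in Lemma~\ref{PTH1}. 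Here I would substitute $x=\tan\theta$ to obtain $\int_{0}^{\pi/4}\theta^{2}\tan\theta\,d\theta$, integrate by parts using the antiderivative $-\ln\cos\theta$ of $\tan\theta$ to get $\frac{\pi^{2}\ln 2}{32}+2\int_{0}^{\pi/4}\theta\ln\cos\theta\,d\theta$, and then use the Fourier expansion $\ln\cos\theta=-\ln 2-\sum_{n\ge1}\frac{(-1)^{n}\cos(2n\theta)}{n}$ with termwise integration; this yields $\int_{0}^{\pi/4}\theta\ln\cos\theta\,d\theta=\frac{\pi G}{8}-\frac{\pi^{2}\ln 2}{32}-\frac{21\zeta(3)}{128}$, hence $\int_{0}^{1}\frac{x(\arctan x)^{2}}{1+x^{2}}\,dx=\frac{1}{64}(16\pi G-\pi^{2}\ln 4-21\zeta(3))$, and $-\frac{3}{2k+1}(-1)^{k}$ times this is the last line of \eqref{EQPTH2.1}.

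Assembling the pieces and simplifying, with care only for the signs $(-1)^{j+1}$ and $(-1)^{k}$, gives both identities. I expect every step except the evaluation of the log-cosine moment $\int_{0}^{\pi/4}\theta\ln\cos\theta\,d\theta$ (equivalently of $\int_{0}^{1}\frac{x(\arctan x)^{2}}{1+x^{2}}\,dx$) to be routine index bookkeeping in the spirit of Lemma~\ref{PTH1}; that single evaluation is the main obstacle, and it may alternatively be quoted from the literature on log-sine/log-cosine integrals.
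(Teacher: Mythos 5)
Your proposal follows essentially the same route as the paper: integrate by parts once to reduce $(\arctan x)^{3}$ to $(\arctan x)^{2}$, split $x^{a+1}/(1+x^{2})$ by \eqref{DXSCF}, feed the polynomial pieces into Lemma \ref{PTH1}, and close with the two tail integrals $\int_{0}^{1}\frac{(\arctan x)^{2}}{1+x^{2}}dx=\frac{\pi^{3}}{192}$ and $\int_{0}^{1}\frac{x(\arctan x)^{2}}{1+x^{2}}dx=\frac{1}{64}(16G\pi-\pi^{2}\ln4-21\zeta(3))$. The only difference is that the paper quotes the latter value while you derive it via $x=\tan\theta$ and the Fourier series of $\ln\cos\theta$, and your evaluation checks out, so the proof is correct.
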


\begin{proof}
\begin{align*}
I(a+3,3,0,0)&=\int_{0}^{1}x^{a}(\arctan (x))^3dx \\ &=\frac{\pi^{3}}{64(a+1)}-\frac{3}{a+1}\int_{0}^{1}\frac{x^{a+1}}{1+x^{2}}(\arctan (x))^2dx,
\end{align*}
by  \eqref{DXSCF}, then
\begin{align*}
\int_{0}^{1}\frac{x^{2k+1}(\arctan (x))^{2}}{1+x^2}dx=&\int_{0}^{1}\sum_{j=1}^{k}(-1)^{j+1}x^{2k+1-2j}(\arctan (x))^{2}dx
\\&+\int_{0}^{1}\frac{(-1)^{k}x(\arctan (x))^{2}}{1+x^{2}}dx,
\\
\int_{0}^{1}\frac{x^{2k+2}(\arctan (x))^{2}}{1+x^2}dx=&\int_{0}^{1}\sum_{j=1}^{k+1}(-1)^{j+1}x^{2k+2-2j}(\arctan (x))^{2}dx\\
&+\int_{0}^{1}\frac{(-1)^{k+1}(\arctan (x))^{2}}{1+x^{2}}dx.\end{align*}
By Lemma \ref{PTH1} and
\begin{align*}
\int_{0}^{1}\frac{x(\arctan (x))^{2}}{1+x^{2}}dx=&\frac{1}{64}(16G\pi-\pi^{2}\ln4-21\zeta(3)),\\
\int_{0}^{1}\frac{(\arctan (x))^{2}}{1+x^{2}}dx=&\frac{\pi^3}{192},
\end{align*}
  \eqref{EQPTH2.1} and \eqref{EQPTH2.2} are proved.
\end{proof}

\begin{lemma}\label{PTH4}Let $ k\in \mathbb{N},$ then
\begin{align}\label{EQPTH4.1}
I(2k+2,2,1,0)=&\int_{0}^{1}x^{2k}(\arctan (x))^{2}\ln (x)dx=\frac{1}{2k+1}\left\{-\frac{\pi^2}{16}+ \frac{k}{2k+1}\left( \frac{\pi^2}{8}\right. \right. \nonumber
\\
 &-\sum_{j=1}^{k}(-1)^{j+1}\frac{\pi+H_{\frac{2k-2j-1}{4}}-H_{\frac{2k-2j+1}{4}}}{2+2k-2j} -(-1)^{k}2G\nonumber\\
  &\left.+(-1)^{k}\frac{\pi \ln2}{2}\right)
-2\left[\sum_{j=1}^{k}\frac{(-1)^{j+1}}{64(k+1-j)^{2}}\left( -4\pi+4\right.\right.\nonumber\\
&\times\psi^{(0)}\left(\frac{2k+5-2j}{4}\right)-4\psi^{(0)}\left(\frac{2k+3-2j}{4}\right)
-2(k+1-j)\nonumber\\
&\times\left.\psi^{(1)}\left(\frac{2k+5-2j}{4}\right) +2(k+1-j)\psi^{(1)}\left(\frac{2k+3-2j}{4}\right)\right) \nonumber\\
&+\left.\frac{(-1)^{k}}{96}\left(3\pi^{3} +6\pi(\ln2)^2-192W(3)\right)\right]+2\left[ \sum_{j=1}^{k}(-1)^{j+1}\right.\nonumber
\\ 	&\times\left.\left.\frac{\pi+H_{\frac{2k-1-2j}{4}}-H_{\frac{2k+1-2j}{4}}}{8(k+1-j)}
+\frac{(-1)^{k}}{8}(4G-\pi\ln2)\right]\right\},
\\
\label{EQPTH4.2}
	I(2k+3,2,1,0)=&\int_{0}^{1}x^{2k+1}(\arctan (x))^{2}\ln (x)dx=\frac{1}{2(k+1)}\left\{-\frac{\pi^2}{16}  \right.\nonumber\\
 &+ \frac{2k+1}{4(k+1)}\left( \frac{\pi^2}{8}-\sum_{j=1}^{k+1}(-1)^{j+1}\frac{\pi+H_{\frac{k-j}{2}}-H_{\frac{k-j+1}{2}}}{2k-2j+3} \right.
 \nonumber \\
	&\left.-\frac{(-1)^{k+1}\pi^{2}}{8}\right)
-2\left[\sum_{j=1}^{k+1}\frac{(-1)^{j+1}}{16(2k+3-2j)^{2}} \right.\nonumber
\\
&\times\left( -4\pi+4\psi^{(0)}\left(\frac{k+3-j}{2}\right)-4\psi^{(0)}\left(\frac{k+2-j}{2}\right)
\right.\nonumber
\\&-(2k+3-2j)\psi^{(1)}\left(\frac{k+3-j}{2}\right)
+(2k+3-2j)\nonumber\\
&\times\left.\left.\psi^{(1)}\left(\frac{k+2-j}{2}\right)\right)+\frac{(-1)^{k+1}}{16}(-4G\pi+7\zeta(3))\right]\nonumber\\
&\left.+2\left[ \sum_{j=1}^{k+1}(-1)^{j+1}\frac{\pi+H_{\frac{k-j}{2}}-H_{\frac{k+1-j}{2}}}{4(2k+3-2j)}
+(-1)^{k+1}\frac{\pi^2}{32}\right]\right\}.
	\end{align}
\end{lemma}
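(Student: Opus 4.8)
\emph{Proof sketch}. The plan is to adapt the integration-by-parts machinery behind Lemmas~\ref{PTH1} and \ref{PTH2}, now carrying the extra logarithmic factor through the whole argument. The cleanest organization is to reinstate a real exponent and differentiate in it. From the proof of Lemma~\ref{PTH1} one has, for every $a>-1$,
\[
\int_{0}^{1}x^{a}(\arctan x)^{2}\,dx=\frac{\pi^{2}}{16(a+1)}-\frac{2}{a+1}\int_{0}^{1}\frac{x^{a+1}\arctan x}{1+x^{2}}\,dx ,
\]
so, since $\partial_{a}x^{a}=x^{a}\ln x$, differentiating under the integral sign (justified by dominated convergence on $[0,1]$) yields
\[
\int_{0}^{1}x^{a}(\arctan x)^{2}\ln x\,dx=-\frac{\pi^{2}}{16(a+1)^{2}}+\frac{2}{(a+1)^{2}}\int_{0}^{1}\frac{x^{a+1}\arctan x}{1+x^{2}}\,dx-\frac{2}{a+1}\int_{0}^{1}\frac{x^{a+1}\arctan x\ln x}{1+x^{2}}\,dx .
\]
Putting $a=2k$ (resp.\ $a=2k+1$) reduces $I(2k+2,2,1,0)$ (resp.\ $I(2k+3,2,1,0)$) to the two integrals $\int_{0}^{1}\tfrac{x^{2k+1}\arctan x}{1+x^{2}}\,dx$ and $\int_{0}^{1}\tfrac{x^{2k+1}\arctan x\ln x}{1+x^{2}}\,dx$ (with $x^{2k+2}$ in the odd case); the first is already handled via \eqref{EQPTH1.1}/\eqref{EQPTH1.2} through the displayed identity.

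For the second, more serious, integral I would strip off the denominator with \eqref{DXSCF}, so that, in the even case,
\[
\int_{0}^{1}\frac{x^{2k+1}\arctan x\ln x}{1+x^{2}}\,dx=\sum_{j=1}^{k}(-1)^{j+1}\int_{0}^{1}x^{2k+1-2j}\arctan x\ln x\,dx+(-1)^{k}\int_{0}^{1}\frac{x\arctan x\ln x}{1+x^{2}}\,dx ,
\]
with the tail replaced by $(-1)^{k+1}\int_{0}^{1}\tfrac{\arctan x\ln x}{1+x^{2}}\,dx$ in the odd case; the $\ln$-free version of this step reproduces the finite harmonic-number sums in \eqref{EQPTH1.1}/\eqref{EQPTH1.2} together with the recorded values $\int_{0}^{1}\tfrac{x\arctan x}{1+x^{2}}\,dx=\tfrac18(4G-\pi\ln2)$ and $\int_{0}^{1}\tfrac{\arctan x}{1+x^{2}}\,dx=\tfrac{\pi^{2}}{32}$. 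The generic term $\int_{0}^{1}x^{m}\arctan x\ln x\,dx$ I would obtain by differentiating $\int_{0}^{1}x^{m}\arctan x\,dx=\tfrac{\pi+H_{(m-2)/4}-H_{m/4}}{4(1+m)}$ (from \cite{SN19}) with respect to $m$: with $H_{z}=\psi^{(0)}(z+1)+\gamma$ and $\tfrac{d}{dz}H_{z}=\psi^{(1)}(z+1)$ the derivative is exactly the combination of $\psi^{(0)},\psi^{(1)}$ at the quarter-integer points $\tfrac{2k+3-2j}{4},\tfrac{2k+5-2j}{4}$ (and the odd-case analogues $\tfrac{k+2-j}{2},\tfrac{k+3-j}{2}$) appearing in \eqref{EQPTH4.1}--\eqref{EQPTH4.2}.

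It remains to evaluate the two base integrals carrying a logarithm. Under $x=\tan\theta$ they turn into $\int_{0}^{\pi/4}\theta\ln(\tan\theta)\,d\theta$ and $\int_{0}^{\pi/4}\theta\tan\theta\ln(\tan\theta)\,d\theta$. For the first I would insert the Fourier expansion $\ln\tan\theta=-2\sum_{n\ge0}\tfrac{\cos((4n+2)\theta)}{2n+1}$ and integrate termwise, which collapses to $\int_{0}^{1}\tfrac{\arctan x\ln x}{1+x^{2}}\,dx=\tfrac1{16}(7\zeta(3)-4\pi G)$; for the second I would integrate by parts via $\tfrac{x}{1+x^{2}}=\tfrac12\,(\ln(1+x^{2}))'$, reducing it to $\int_{0}^{1}\tfrac{\ln(1+x^{2})\ln x}{1+x^{2}}\,dx$ and $\int_{0}^{1}\tfrac{\ln(1+x^{2})\arctan x}{x}\,dx$, which is where $Li_{3}$ at $\tfrac{1+i}{2}$ enters and which produces $\int_{0}^{1}\tfrac{x\arctan x\ln x}{1+x^{2}}\,dx=\tfrac1{96}\big(3\pi^{3}+6\pi\ln^{2}2-192\,W(3)\big)$.

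Finally I would substitute every evaluated piece back into the differentiated identity, replace $\int_{0}^{1}\tfrac{x^{2k+1}\arctan x}{1+x^{2}}\,dx$ by $\tfrac{\pi^{2}}{32}-\tfrac{2k+1}{2}I(2k+2,2,0,0)$ (and similarly in the odd case), insert \eqref{EQPTH1.1}/\eqref{EQPTH1.2}, and regroup everything over the common factor $\tfrac1{2k+1}$ (resp.\ $\tfrac1{2(k+1)}$) to reach \eqref{EQPTH4.1} and \eqref{EQPTH4.2}. The main obstacle is the base integral $\int_{0}^{1}\tfrac{x\arctan x\ln x}{1+x^{2}}\,dx$: isolating the constant $W(3)=\Im Li_{3}(\tfrac{1+i}{2})$ forces one through reflection/duplication-type identities for $Li_{3}$ at a complex argument. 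A secondary, purely bookkeeping difficulty is keeping the half-integer shifts in the polygamma arguments, the signs $(-1)^{j+1}$, and the $\ln$-free $H$-sums all aligned so that the long final combination telescopes into the stated closed form.
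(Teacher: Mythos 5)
Your proposal is correct and follows essentially the same route as the paper: the parameter-differentiation of the Lemma~\ref{PTH1} identity is algebraically identical to the paper's integration by parts \eqref{LO1}, and the rest (splitting $\int_0^1\frac{x^{a+1}\arctan x\,\ln x}{1+x^2}dx$ with \eqref{DXSCF}, the polygamma moment formula for $\int_0^1 x^m\arctan x\,\ln x\,dx$, and the two base constants $\tfrac1{16}(7\zeta(3)-4\pi G)$ and $\tfrac1{96}(3\pi^3+6\pi\ln^2 2-192W(3))$) is exactly the paper's decomposition. The only difference is that you re-derive the auxiliary evaluations (differentiating the \cite{SN19} moment formula in $m$, Fourier series for $\ln\tan\theta$, the $Li_3$ reduction) where the paper simply cites \cite{SF22} and quotes the constants; your derivations check out.
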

\begin{proof}
Integrating by parts, then
\begin{align}\label{LO1}
(a+1)\int_{0}^{1}x^{a}(\arctan (x))^2\ln (x)dx=&-\frac{\pi^2}{16}+a\int_{0}^{1}x^a(\arctan (x))^2dx\nonumber
\end{align}
\begin{align}
-2\int_{0}^{1}\frac{x^{a+1}\ln (x) (\arctan (x))}{1+x^{2}}dx+2\int_{0}^{1}\frac{x^{a+1}\arctan (x)}{1+x^{2}}dx,
\end{align}
 for the last two integrals in \eqref{LO1}, using  \eqref{DXSCF},  then
	\begin{align*}
\int_{0}^{1}\frac{x^{2k+1}\arctan (x)\ln (x)}{1+x^2}dx=&\int_{0}^{1}\sum_{j=1}^{k}(-1)^{j+1}x^{2k+1-2j}\arctan (x)\ln (x)dx\\
&+\int_{0}^{1}\frac{(-1)^{k}x\arctan (x)\ln (x)}{1+x^{2}}dx,
\\
	\int_{0}^{1}\frac{x^{2k+2}\arctan (x)\ln (x)}{1+x^2}dx=&\int_{0}^{1}\sum_{j=1}^{k+1}(-1)^{j+1}x^{2k+2-2j}\arctan (x)\ln (x)dx
\\&+\int_{0}^{1}\frac{(-1)^{k+1}\arctan (x)\ln (x)}{1+x^{2}}dx.
\end{align*}
	At the same time, by Lemma \ref{PTH1} and
\begin{align*}\int_{0}^{1}x^{m}\ln (x)\arctan (x)dx=&\frac{1}{16(1+m)^{2}}\left[ -4\pi+4\psi^{(0)}\left(\frac{4+m}{4}\right)\right.\\	&-4\psi^{(0)}\left(\frac{2+m}{4}\right)-(1+m)\psi^{(1)}\left(\frac{4+m}{4}\right)
\\&\left.+(1+m)\psi^{(1)}\left(\frac{2+m}{4}\right)\right] ,\\
	\int_{0}^{1}\frac{x\ln (x)\arctan (x)}{1+x^{2}}dx=&\frac{1}{96}\left(3\pi^{3}+6\pi(\ln2)^2-192W(3)\right),\\
	\int_{0}^{1}\frac{\arctan (x)\ln (x)}{1+x^{2}}dx=&\frac{1}{16}(-4G\pi+7\zeta(3)),
\end{align*}
 Lemma \ref{PTH4} is proved.
\end{proof}

\begin{lemma}\label{PTH5}Let $ k\in \mathbb{N} ,$ then
\begin{align}\label{EQPTH5}
I(2k+2,2,1,1)=&\int_{0}^{1}\frac{x^{2k}\ln (x) (\arctan (x))^{2}}{1+x^{2}}dx=\sum_{j=1}^{k}\frac{(-1)^{j+1}}{2k-2j+1}\left\{ -\frac{\pi^{2}}{16}\right.\nonumber\\
&+\frac{k-j}{2k-2j+1}\left(\frac{\pi^{2}}{8}-(-1)^{k-j}2G+\frac{(-1)^{k-j}}{2}\pi\ln2
\right.\nonumber\\
&-\left.\sum_{j'=1}^{k-j}(-1)^{j'+1}
\frac{\pi+H_{\frac{2k-2j-2j'-1}{4}}-H_{\frac{2k-2j-2j'+1}{4}}}{2k+2-2j-2j'} \right) \nonumber\\
&-2\left[\sum_{j'=1}^{k-j}\frac{(-1)^{j'+1}}{64(k+1-j-j')^{2}}
\left(-4\pi+4\psi^{(0)}\left(\frac{2k+5-2j-2j'}{4}\right)\right.\right.\nonumber
\end{align}
\begin{align}
&-4\psi^{(0)}\left(\frac{2k+3-2j-2j'}{4}\right)
-2(k+1-j-j')\psi^{(1)}\left(\frac{2k+5-2j-2j'}{4}\right)\nonumber\\
&+\left.2(k+1-j-j')\psi^{(1)}\left(\frac{2k+3-2j-2j'}{4}\right)\right)\nonumber\\
&+\left.\frac{(-1)^{k-j}}{96}(3\pi^{3}
+6\pi(\ln2)^2-192W(3))\right]\nonumber\\
&+2\left[\sum_{j'=1}^{k-j}(-1)^{j'+1}\right.
\left.\times\frac{\pi+H_{\frac{2k-1-2j-2j'}{4}}
-H_{\frac{2k+1-2j-2j'}{4}}}{8(k+1-j-j')}\right.\nonumber
\\
&+\left.\left.(-1)^{k-j}\frac{4G-\pi\ln2}{8}\right]\right\}+(-1)^{k}\frac{-3G\pi^{2}+24\beta(4)}{48}.
\end{align}
\end{lemma}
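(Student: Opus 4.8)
The plan is to reduce $I(2k+2,2,1,1)$ to Lemma~\ref{PTH4} by polynomial division. Since $\dfrac{x^{2k}}{1+x^{2}}$ is the $a=2k-1$ (odd) branch of \eqref{DXSCF}, namely $\dfrac{x^{2k}}{1+x^{2}}=\sum_{j=1}^{k}(-1)^{j+1}x^{2k-2j}+(-1)^{k}\dfrac{1}{1+x^{2}}$, integrating $\ln(x)(\arctan x)^{2}$ against both sides gives
$$I(2k+2,2,1,1)=\sum_{j=1}^{k}(-1)^{j+1}\int_{0}^{1}x^{2(k-j)}\ln(x)(\arctan x)^{2}\,dx+(-1)^{k}\int_{0}^{1}\frac{\ln(x)(\arctan x)^{2}}{1+x^{2}}\,dx .$$
The integral under the finite sum is $I\bigl(2(k-j)+2,2,1,0\bigr)$, whose value is \eqref{EQPTH4.1} with $k$ replaced by $k-j$ and the summation index $j$ there renamed $j'$; substituting it reproduces exactly the braces in \eqref{EQPTH5}. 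So the lemma reduces to the single evaluation $\displaystyle\int_{0}^{1}\frac{\ln(x)(\arctan x)^{2}}{1+x^{2}}\,dx=\frac{-3G\pi^{2}+24\beta(4)}{48}$.

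To prove that, substitute $x=\tan\theta$ to get $\int_{0}^{\pi/4}\theta^{2}\ln(\tan\theta)\,d\theta$, then insert the Fourier expansion $\ln(\tan\theta)=-2\sum_{n\ge0}\dfrac{\cos((4n+2)\theta)}{2n+1}$, valid on $(0,\pi/2)$, and integrate term by term. Two integrations by parts give $\int_{0}^{\pi/4}\theta^{2}\cos((4n+2)\theta)\,d\theta$; because $\sin\bigl((2n+1)\tfrac{\pi}{2}\bigr)=(-1)^{n}$ and $\cos\bigl((2n+1)\tfrac{\pi}{2}\bigr)=0$, only two boundary terms survive and the integral equals $(-1)^{n}\bigl(\tfrac{\pi^{2}}{32(2n+1)}-\tfrac{1}{4(2n+1)^{3}}\bigr)$. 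Summing over $n$ yields $-\tfrac{\pi^{2}}{16}\beta(2)+\tfrac12\beta(4)=-\tfrac{\pi^{2}G}{16}+\tfrac{\beta(4)}{2}$, which is the claimed value. The interchange of summation and integration is justified since the partial sums of the series are uniformly bounded on each interval $[\varepsilon,\pi/4]$ while $\theta^{2}\ln(\tan\theta)$ is absolutely integrable near $0$; equivalently, one integrates against a truncation and passes to the limit. (Alternatively, an integration by parts with vanishing boundary terms rewrites the integral as $-\tfrac13\int_{0}^{1}\tfrac{(\arctan x)^{3}}{x}\,dx$, a classical arctangent integral with the same closed form.)

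The main obstacle is not a single hard identity but the bookkeeping in the first step: after inserting \eqref{EQPTH4.1} with $k\mapsto k-j$ into the finite sum one must verify that the constant $-\tfrac{\pi^{2}}{16}$ piece, the weighted piece with prefactor $\tfrac{k-j}{2k-2j+1}$, the polygamma block, the harmonic-number block, and all the signs $(-1)^{k-j}$ match \eqref{EQPTH5} term by term — routine but lengthy. A secondary point needing care is the term-by-term integration of the $\ln(\tan\theta)$ series at the endpoint $\theta=0$, handled by the uniform-boundedness (or truncation) argument above.
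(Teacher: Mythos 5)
Your proof follows the paper's own route: decompose $\tfrac{x^{2k}}{1+x^{2}}$ via \eqref{DXSCF}, evaluate the resulting polynomial terms $\int_{0}^{1}x^{2(k-j)}\ln(x)(\arctan x)^{2}dx$ by Lemma~\ref{PTH4} with $k\mapsto k-j$, and add the residual term $(-1)^{k}\int_{0}^{1}\tfrac{\ln(x)(\arctan x)^{2}}{1+x^{2}}dx=(-1)^{k}\bigl(-\tfrac{\pi^{2}G}{16}+\tfrac{\beta(4)}{2}\bigr)$, exactly as in the paper. The only difference is that you actually derive this last constant (via the Fourier expansion of $\ln\tan\theta$, correctly), whereas the paper merely quotes it; your argument is correct.
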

\begin{proof}
Using  \eqref{DXSCF}, then
\begin{align*}
\int_{0}^{1}\frac{x^{2k}\ln (x)(\arctan (x))^{2}}{1+x^{2}}dx=&\sum_{j=1}^{k}(-1)^{j+1}\int_{0}^{1}x^{2k-2j}\ln (x)(\arctan (x))^2dx
\\
&+\int_{0}^{1}\frac{(-1)^{k}\ln (x)(\arctan (x))^2}{1+x^2}dx,\\
\int_{0}^{1}\frac{\ln (x)(\arctan (x))^{2}}{1+x^{2}}dx=&\frac{-G\pi^{2}}{16}+\frac{\beta(4)}{2},
\end{align*}
by Lemma \ref{PTH4}, and Lemma \ref{PTH5} is proved.
\end{proof}

\begin{lemma}\label{PTH6}
Let $ k\in \mathbb{N},$ then
\begin{align}\label{EQPTH6}
I(2k+4,3,1,0)=&\int_{0}^{1}x^{2k+1}(\arctan (x))^{3}\ln (x)dx=-\frac{\pi^{3}}{128(k+1)}\nonumber\\
&+\frac{(2k+1)\pi^{3}}{256(k+1)^{2}}-\frac{3(2k+1)}{4(k+1)^{2}}\sum_{j=1}^{k+1}\frac{(-1)^{j+1}}{2(2k+3-2j)}\left(  \frac{\pi^{2}}{8}\right.\nonumber
\\
&-\sum_{j'=1}^{k+1-j}(-1)^{j'+1}
\frac{\pi+H_{\frac{2k-2j-2j'+1}{4}}-H_{\frac{2k-2j-2j'+3}{4}}}{2k-2j-2j'+4}\nonumber\\
& -(-1)^{k+1-j}2G\left.+\frac{(-1)^{k+1-j}\pi\ln2}{2}\right)-\frac{(2k+1)\pi^{3}}{256(k+1)^{2}}
\nonumber\\& \times(-1)^{k+1}-\frac{3}{2(k+1)}\sum_{j=1}^{k+1}\frac{(-1)^{j+1}}{2k+3-2j}\left[-\frac{\pi^2}{16} \right.\nonumber
\\
&+ \frac{k+1-j}{2k+3-2j}\left( \frac{\pi^2}{8}-(-1)^{k+1-j}2G+\frac{(-1)^{k+1-j}\pi \log2}{2} \right. \nonumber\end{align}
\begin{align}
& \left.-\sum_{j'=1}^{k+1-j}(-1)^{j'+1}
\frac{\pi+H_{\frac{2k-2j-2j'+1}{4}}-H_{\frac{2k-2j-2j'+3}{4}}}{2(k-j-j'+2)}\right)
-2\sum_{j'=1}^{k+1-j}
\frac{(-1)^{j'+1}}{64(k+2-j-j')^{2}}\nonumber
\\
&\times \left(-4\pi \right.+4\psi^{(0)}\left(\frac{2k+7-2j-2j'}{4}\right)
-4\psi^{(0)}\left(\frac{2k+5-2j-2j'}{4}\right)\nonumber\\
&-2(k+2-j-j')\left(\psi^{(1)}\left(\frac{2k+7-2j-2j'}{4}\right)\right.
-\left.\left.\psi^{(1)}\left(\frac{2k+5-2j-2j'}{4}\right)\right)\right)\nonumber\\
&
+\frac{(-1)^{k+1-j}}{48}\times(3\pi^{3}
+6\pi(\ln2)^2-192W(3))+2\left(  \sum_{j'=1}^{k+1-j}(-1)^{j'+1}\right.\nonumber
\\
&\times
\frac{\pi+H_{\frac{2k+1-2j-2j'}{4}}-H_{\frac{2k+3-2j-2j'}{4}}}{8(k+2-j-j')}
+(-1)^{k+1-j}\left.\left.\frac{(4G-\pi\ln2)}{8}\right)\right]\nonumber\\
&-(-1)^{k+1}
\frac{-3G\pi^{2}+24\beta(4)}{32(k+1)}
+\frac{3}{2(k+1)}\left[\sum_{j=1}^{k+1}\frac{(-1)^{j+1}}{2(2k+3-2j)}\right.
\left(\frac{\pi^{2}}{8}\right.\nonumber\\
&-(-1)^{k+1-j}2G
-\sum_{j'=1}^{k+1-j}(-1)^{j'+1}
\frac{\pi+H_{\frac{2k+1-2j-2j'}{4}}-H_{\frac{2k+3-2j-2j'}{4}}}{2(k+2-j-j')}
\nonumber
\\
&+\left.\left.\frac{(-1)^{k+1-j}}{2}\pi\ln2\right)
+(-1)^{k+1}\frac{\pi^{3}}{192}\right].
\end{align}
\end{lemma}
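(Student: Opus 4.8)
The plan is to follow exactly the pattern of Lemmas~\ref{PTH2} and~\ref{PTH4}: one integration by parts lowers the exponent of $\arctan x$ from $3$ to $2$, after which every surviving integral is a shifted instance of a formula already established in this section.

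I would start from
\[
I(2k+4,3,1,0)=\int_{0}^{1}x^{2k+1}(\arctan x)^{3}\ln x\,dx=\int_{0}^{1}(\arctan x)^{3}\,d\!\left(\frac{x^{2k+2}\ln x}{2k+2}-\frac{x^{2k+2}}{(2k+2)^{2}}\right),
\]
integrating by parts with $u=(\arctan x)^{3}$, $dv=x^{2k+1}\ln x\,dx$. The antiderivative $v$ tends to $0$ as $x\to0^{+}$ and equals $-\frac1{(2k+2)^{2}}$ at $x=1$, while $\arctan 1=\pi/4$, so the boundary term is $-\frac{\pi^{3}}{256(k+1)^{2}}$ and
\[
I(2k+4,3,1,0)=-\frac{\pi^{3}}{256(k+1)^{2}}-\frac{3}{2(k+1)}\int_{0}^{1}\frac{x^{2k+2}\ln x\,(\arctan x)^{2}}{1+x^{2}}\,dx+\frac{3}{4(k+1)^{2}}\int_{0}^{1}\frac{x^{2k+2}(\arctan x)^{2}}{1+x^{2}}\,dx .
\]
(Equivalently, taking $u=(\arctan x)^{3}\ln x$, $dv=x^{2k+1}dx$ gives $I(2k+4,3,1,0)=-\frac{3}{2(k+1)}\int_{0}^{1}\frac{x^{2k+2}\ln x(\arctan x)^{2}}{1+x^{2}}dx-\frac1{2(k+1)}I(2k+4,3,0,0)$; the two routes coincide because $I(2k+4,3,0,0)=\frac{\pi^{3}}{128(k+1)}-\frac{3}{2(k+1)}\int_{0}^{1}\frac{x^{2k+2}(\arctan x)^{2}}{1+x^{2}}dx$, an identity read off from the proof of Lemma~\ref{PTH2}.)

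Next I would identify the two remaining integrals. The first, $\int_{0}^{1}\frac{x^{2k+2}\ln x\,(\arctan x)^{2}}{1+x^{2}}dx$, is precisely $I\bigl(2(k+1)+2,2,1,1\bigr)$, i.e. the case ``$k\mapsto k+1$'' of $I(2k+2,2,1,1)$, so I substitute the closed form \eqref{EQPTH5} of Lemma~\ref{PTH5} with $k$ replaced by $k+1$. For the second, I expand $\frac{x^{2k+2}}{1+x^{2}}$ by \eqref{DXSCF} with $a=2k+1$, obtaining
\[
\int_{0}^{1}\frac{x^{2k+2}(\arctan x)^{2}}{1+x^{2}}\,dx=\sum_{j=1}^{k+1}(-1)^{j+1}\int_{0}^{1}x^{2(k+1-j)}(\arctan x)^{2}\,dx+(-1)^{k+1}\int_{0}^{1}\frac{(\arctan x)^{2}}{1+x^{2}}\,dx,
\]
where $\int_{0}^{1}x^{2(k+1-j)}(\arctan x)^{2}dx=I\bigl(2(k+1-j)+2,2,0,0\bigr)$ is \eqref{EQPTH1.1} of Lemma~\ref{PTH1} with $k\mapsto k+1-j$, and $\int_{0}^{1}\frac{(\arctan x)^{2}}{1+x^{2}}dx=\frac{\pi^{3}}{192}$ was computed in the proof of Lemma~\ref{PTH2} (one may alternatively feed $I(2k+4,3,0,0)$ in through \eqref{EQPTH2.2} directly). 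Inserting everything and collecting: the leading $\pi^{3}/(k+1)$ and $\pi^{3}/(k+1)^{2}$ contributions (boundary term, leading term of \eqref{EQPTH2.2}, the $\frac{\pi^{3}}{192}$-remnants) combine; the single-nested sums carrying the Lemma~\ref{PTH1} bracket fuse with the matching remnant inside \eqref{EQPTH5}; and the $\bigl(-3G\pi^{2}+24\beta(4)\bigr)$-tail of \eqref{EQPTH5} at $k+1$ produces the term $-(-1)^{k+1}\frac{-3G\pi^{2}+24\beta(4)}{32(k+1)}$. Reorganising these pieces yields \eqref{EQPTH6}.

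The single real obstacle is the bookkeeping, not the ideas. Formula \eqref{EQPTH5} already carries a triple layer of finite summation — an outer sum over $j$, an inner sum over $j'$, plus harmonic-number and polygamma pieces — and substituting its ``$k\mapsto k+1$'' version, together with the ``$k\mapsto k+1-j$'' shift of \eqref{EQPTH1.1}, compounds this. The effort lies in re-indexing the nested sums so that the arguments of the $H_{\,\cdot\,}$ and $\psi^{(\cdot)}$ terms line up between the blocks one wants to merge, in checking that the many isolated occurrences of $\pi^{3}$, $G\pi$, $\zeta(3)$, $W(3)$, $\beta(4)$ and $\ln 2$ collapse into exactly the displayed coefficients, and in verifying that the inner sums degenerate correctly at the extreme index $j=k+1$, where the $j'$-sums are empty.
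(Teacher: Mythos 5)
Your proposal is correct and follows essentially the same route as the paper: a single integration by parts reducing $(\arctan x)^{3}\ln x$ to integrals of $(\arctan x)^{2}$ against $x^{m}$, $x^{m}\ln x$ and $\tfrac{1}{1+x^{2}}$, which are then evaluated by the decomposition \eqref{DXSCF} together with Lemmas \ref{PTH1}, \ref{PTH2} and \ref{PTH4}. The only cosmetic difference is that you invoke Lemma \ref{PTH5} with $k\mapsto k+1$ (and fold the $\tfrac{a}{a+1}I(2k+4,3,0,0)$ contribution into a combined coefficient $\tfrac{3}{4(k+1)^{2}}$), whereas the paper keeps that term and redoes the \ref{PTH5}-type decomposition inline via Lemma \ref{PTH4}; the two forms are algebraically identical.
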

\begin{proof}
Integrating by parts, then
\begin{align*}
	(a+1)\int_{0}^{1}x^{a}\ln (x)(\arctan(x))^3dx=&a\int_{0}^{1}x^{a}(\arctan(x))^3dx-\frac{\pi^3}{64}
\\&+3\int_{0}^{1}\frac{x^{a+1}(\arctan(x))^2}{1+x^2}dx
\\
&-3\int_{0}^{1}\frac{x^{a+1}\ln (x)(\arctan(x))^{2}}{1+x^{2}}dx,\end{align*}
by Lemma \ref{PTH1}, Lemma \ref{PTH2} and Lemma \ref{PTH4}, Lemma \ref{PTH6} is proved.
\end{proof}
\begin{lemma}\label{PTH7}Let $ k\in \mathbb{N}, $ then
\begin{align}\label{EQPTH7.1}
I(2k+1,1,2,1)=&\int_{0}^{1}\frac{x^{2k}\arctan (x)(\ln (x))^{2}}{1+x^2}dx
=\sum_{j=1}^{k}\frac{(-1)^{j+1}}{64(1+2k-2j)^{3}}\nonumber\\
&\times\left(32\pi-32\psi^{(0)}\left(\frac{k+2-j}{2}\right)+32\psi^{(0)}\left(\frac{1+k-j}{2}\right)
\right.\nonumber\\
&+8(1+2k-2j)\psi^{(1)}\left(\frac{k+2-j}{2}\right)\nonumber\\
&-8(1+2k-2j)\psi^{(1)}\left(\frac{1+k-j}{2}\right)-(1+2k-2j)^{2}\nonumber
\end{align}
\begin{align}
&\left.\times\left(\psi^{(2)}\left(\frac{k+2-j}{2}\right)
-\psi^{(2)}\left(\frac{1+k-j}{2}\right)\right)\right)
+(-1)^{k}\left[\frac{1}{2}\left(\frac{11}{4}\zeta(4)\right.\right.
\nonumber\\&-\frac{7}{4}\zeta(3)\ln2+\frac{1}{2}\zeta(2)\ln^{2}2
-\left.\left.\frac{1}{12}\ln^{4}2-2Li_{4}\left(\frac{1}{2}\right)\right)-\frac{25}{128}\zeta(4)\right],
\end{align}\vspace{-0.5cm}
\begin{align}
\label{EQPTH7.2}
I(2k+2,1,2,1)=&\int_{0}^{1}\frac{x^{2k+1}\arctan (x)(\ln (x))^{2}}{1+x^2}dx
=\sum_{j=1}^{k}\frac{(-1)^{j+1}}{512(1+k-j)^{3}}\nonumber
\\
&\times\left(32\pi-32\psi^{(0)}\left(\frac{2k+5-2j}{4}\right)+32\psi^{(0)}\left(\frac{3+2k-2j}{4}\right)
\right.\nonumber\\
&+16(1+k-j)\psi^{(1)}\left(\frac{2k+5-2j}{4}\right)-16(1+k-j)\nonumber\\
&\times\psi^{(1)}\left(\frac{3+2k-2j}{4}\right)
-4(1+k-j)^{2}\psi^{(2)}\left(\frac{2k+5-2j}{4}\right)\nonumber\\
&
+\left.4(1+k-j)^{2}\psi^{(2)}\left(\frac{3+2k-2j}{4}\right)\right)\nonumber
\\
&
+(-1)^{k}\left(\frac{7\pi}{64}\zeta(3)+\beta(4)-\frac{\pi^{3}}{16}\ln2\right).
\end{align}
\end{lemma}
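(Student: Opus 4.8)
The plan is to reduce both integrals to exactly the ingredients already used for Lemmas \ref{PTH4}--\ref{PTH6}: the partial‑fraction identity \eqref{DXSCF}, one ``polynomial $\times\,\arctan\cdot\ln^{2}$'' building‑block integral over $[0,1]$, and a single residual integral carrying the factor $(1+x^{2})^{-1}$. For $I(2k+1,1,2,1)=\int_{0}^{1}\frac{x^{2k}\arctan(x)\ln^{2}(x)}{1+x^{2}}\,dx$ I would apply \eqref{DXSCF} with $a=2k-1$, so that $\frac{x^{2k}}{1+x^{2}}=\sum_{j=1}^{k}(-1)^{j+1}x^{2k-2j}+(-1)^{k}\frac{1}{1+x^{2}}$ and hence
\[
I(2k+1,1,2,1)=\sum_{j=1}^{k}(-1)^{j+1}\!\int_{0}^{1}\!x^{2k-2j}\arctan(x)\ln^{2}(x)\,dx+(-1)^{k}\!\int_{0}^{1}\!\frac{\arctan(x)\ln^{2}(x)}{1+x^{2}}\,dx .
\]
For $I(2k+2,1,2,1)$ I would use \eqref{DXSCF} with $a=2k$, giving $\frac{x^{2k+1}}{1+x^{2}}=\sum_{j=1}^{k}(-1)^{j+1}x^{2k+1-2j}+(-1)^{k}\frac{x}{1+x^{2}}$ and the analogous splitting, now with residual integral $\int_{0}^{1}\frac{x\arctan(x)\ln^{2}(x)}{1+x^{2}}\,dx$. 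Thus the whole $j$‑sum in each of \eqref{EQPTH7.1}, \eqref{EQPTH7.2} is accounted for once the building‑block integral is known, and only one extra constant remains.

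For the building‑block integral $\int_{0}^{1}x^{m}\arctan(x)\ln^{2}(x)\,dx$ I would differentiate the identity $\int_{0}^{1}x^{m}\arctan(x)\,dx=\frac{\pi+H_{(m-2)/4}-H_{m/4}}{4(1+m)}$ of \cite{SN19} twice with respect to $m$ (differentiation under the integral sign being legitimate for $m>-1$). Writing $H_{z}=\psi^{(0)}(z+1)+\gamma$, each $m$‑derivative of an $H$‑term raises the polygamma order by one, so after two derivatives $\psi^{(0)},\psi^{(1)},\psi^{(2)}$ appear at the arguments $\frac{m+2}{4}$ and $\frac{m+4}{4}$, while the product rule acting on the $\tfrac{1}{4(1+m)}$ factor produces the denominator $(1+m)^{3}$ together with the numerators $1,(1+m),(1+m)^{2}$ on the $\psi^{(2)},\psi^{(1)},\psi^{(0)}$ contributions, respectively. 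This is simply the second derivative of the first‑derivative formula already invoked in Lemma \ref{PTH4}. Setting $m=2k-2j$ (resp. $m=2k+1-2j$) then reproduces exactly the bracketed $\psi$‑expressions inside \eqref{EQPTH7.1} (resp. \eqref{EQPTH7.2}).

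The remaining, and genuinely delicate, step is to evaluate the two residual integrals. From the Milgram expansion used in the proof of Theorem \ref{CX1.1} with $p=2$ (here $t_{n}(1)=2h_{n+1}$) one gets $(\arctan x)^{2}=\sum_{m\ge1}\frac{(-1)^{m+1}h_{m}}{m}x^{2m}$, and differentiating, $\frac{\arctan x}{1+x^{2}}=\sum_{m\ge1}(-1)^{m+1}h_{m}x^{2m-1}$. Integrating term by term against $\ln^{2}(x)$ and using $\int_{0}^{1}x^{n}\ln^{2}(x)\,dx=\frac{2}{(n+1)^{3}}$ gives
\[
\int_{0}^{1}\frac{\arctan(x)\ln^{2}(x)}{1+x^{2}}\,dx=\frac14\sum_{m\ge1}\frac{(-1)^{m+1}h_{m}}{m^{3}},\qquad \int_{0}^{1}\frac{x\arctan(x)\ln^{2}(x)}{1+x^{2}}\,dx=2\sum_{m\ge1}\frac{(-1)^{m+1}h_{m}}{(2m+1)^{3}};
\]
alternatively, one integration by parts turns the first residual integral into $-\int_{0}^{1}\frac{(\arctan x)^{2}\ln x}{x}\,dx$, i.e.\ minus the integral $X(-1,1,2,1)$ studied in \cite{SN20}. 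The closed evaluations of these alternating Euler‑like sums --- the first yielding a combination of $Li_{4}(\tfrac12),\zeta(4),\zeta(3)\ln2,\zeta(2)\ln^{2}2,\ln^{4}2$ and the second a combination of $\beta(4),\pi^{3}\ln2,\pi\zeta(3)$ --- are the crux of the argument, the $\beta(4)$‑sum with its odd denominators being the hardest input. Once both base values are in hand, substituting them into the two splittings above and collecting all terms yields \eqref{EQPTH7.1} and \eqref{EQPTH7.2}; everything else is routine bookkeeping with \eqref{DXSCF} and differentiation under the integral sign.
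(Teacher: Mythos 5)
Your decomposition is exactly the paper's: apply \eqref{DXSCF} (with $a=2k-1$, resp. $a=2k$) to split each integral into $\sum_{j=1}^{k}(-1)^{j+1}\int_{0}^{1}x^{m}\arctan(x)\ln^{2}(x)\,dx$ plus the residual integral carrying $(1+x^{2})^{-1}$ (resp. $x(1+x^{2})^{-1}$), and this is also how the two $j$-sums and the $(-1)^{k}$ terms in \eqref{EQPTH7.1}--\eqref{EQPTH7.2} arise in the paper. The only methodological difference is in how the three ingredient integrals are obtained: the paper simply quotes all of them from \cite{SF22}, whereas you re-derive the moment $\int_{0}^{1}x^{m}\arctan(x)\ln^{2}(x)\,dx$ by differentiating the arctan moment of \cite{SN19} twice in $m$ (legitimate, and structurally it does reproduce the $\psi^{(0)},\psi^{(1)},\psi^{(2)}$ formula with denominators $(1+m)^{3}$), and you convert the residual integrals into the alternating sums $\tfrac14\sum(-1)^{m+1}h_{m}/m^{3}$ and $2\sum(-1)^{m+1}h_{m}/(2m+1)^{3}$ (both conversions are correct, as is the integration-by-parts identification with $-X(-1,1,2,1)$).

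The one genuine shortfall is that your argument stops at precisely the point that produces the lemma's characteristic constants: the closed values $\int_{0}^{1}\frac{\arctan(x)\ln^{2}(x)}{1+x^{2}}dx=\frac12\bigl(\frac{11}{4}\zeta(4)-\frac74\zeta(3)\ln 2+\frac12\zeta(2)\ln^{2}2-\frac1{12}\ln^{4}2-2\,Li_{4}(\tfrac12)\bigr)-\frac{25}{128}\zeta(4)$ and $\int_{0}^{1}\frac{x\arctan(x)\ln^{2}(x)}{1+x^{2}}dx=\frac{7\pi}{64}\zeta(3)+\beta(4)-\frac{\pi^{3}}{16}\ln 2$ are only described qualitatively ("once both base values are in hand"), not established or even stated. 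Without them you cannot produce the specific $(-1)^{k}$-terms in \eqref{EQPTH7.1} and \eqref{EQPTH7.2}. The paper closes this exactly by citing these two definite integrals from \cite{SF22}; to complete your version you must either do the same or actually evaluate the two alternating linear Euler sums you reduced them to (the weight-4 sum via standard $Li_{4}(\tfrac12)$ identities, the odd-denominator sum yielding $\beta(4)$), which is nontrivial extra work that your proposal acknowledges but does not carry out.
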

\begin{proof}
By  \eqref{DXSCF}, then
\begin{align*}
\int_{0}^{1}\frac{x^{2k}\arctan (x)(\ln (x))^{2}}{1+x^2}dx=&\int_{0}^{1}\sum_{j=1}^{k}(-1)^{j+1}x^{2k-2j}\arctan (x)(\ln (x))^{2}dx\\
&+\int_{0}^{1}\frac{(-1)^{k}\arctan (x)(\ln (x))^{2}}{1+x^2}dx,
\\
\int_{0}^{1}\frac{x^{2k+1}\arctan (x)(\ln (x))^{2}}{1+x^2}dx=&\int_{0}^{1}\sum_{j=1}^{k}(-1)^{j+1}x^{2k+1-2j}\arctan (x)(\ln (x))^{2}dx\\
&+\int_{0}^{1}\frac{(-1)^{k}x\arctan (x)(\ln (x))^{2}}{1+x^2}dx.
\end{align*}
Moreover, by the following identities in \cite{SF22}
\begin{align*}
\int_{0}^{1}x^{m}\arctan (x)(\ln (x))^{2}dx=&\frac{1}{64(1+m)^{3}}\left(32\pi-32\psi^{(0)}\left(\frac{m+4}{4}\right)\right.
\\&+32\psi^{(0)}\left(\frac{2+m}{4}\right)+8(1+m)\psi^{(1)}\left(\frac{m+4}{4}\right)
\\&-8(1+m)\psi^{(1)}\left(\frac{2+m}{4}\right)-(1+m)^{2}
\\&\times\psi^{(2)}\left(\frac{m+4}{4}\right)
\left.+(1+m)^{2}\psi^{(2)}\left(\frac{2+m}{4}\right)\right),\\
\int_{0}^{1}\frac{\arctan (x)(\ln (x))^{2}}{1+x^2}dx=&\frac{1}{2}\left(\frac{11}{4}\zeta(4)-\frac{7}{4}\zeta(3)\ln2
+\frac{1}{2}\zeta(2)\ln^{2}2\right.\\
&\left.-\frac{1}{12}\ln^{4}2-2Li_{4}\left(\frac{1}{2}\right)\right)-\frac{25}{128}\zeta(4),
\\
\int_{0}^{1}\frac{x\arctan (x)(\ln (x))^{2}}{1+x^2}dx=&\frac{7\pi}{64}\zeta(3)+\beta(4)-\frac{\pi^{3}}{16}\ln2,
\end{align*}
and Lemma \ref{PTH7} is proved.
\end{proof}

\begin{lemma}\label{PTH8}Let $k\in  \mathbb{N},$ then
\begin{align}\label{EQPTH8.1}
I(2k+2,2,2,0)=&\int_{0}^{1}x^{2k}(\arctan (x))^{2}(\ln (x))^{2}dx=\frac{-2}{2k+1}\left[\sum_{j=1}^{k}\frac{(-1)^{j+1}}{(k+1-j)^{3}}\right.\nonumber\\
&\times\frac{1}{512}\left(32\pi
-32\psi^{(0)}\left(\frac{2k+5-2j}{4}\right)+32\psi^{(0)}\left(\frac{2k+3-2j}{4}\right)
\right.\nonumber\\ &+16(k+1-j)\left(\psi^{(1)}\left(\frac{2k+5-2j}{4}\right)
-\psi^{(1)}\left(\frac{2k+3-2j}{4}\right)\right)
\nonumber\\
&-\left.4(k+1-j)^{2}\left(\psi^{(2)}\left(\frac{2k+5-2j}{4}\right)
-\psi^{(2)}\left(\frac{3+2k-2j}{4}\right)\right)\right)\nonumber
\\
&\left.+(-1)^{k}\left(\frac{7\pi}{64}\zeta(3)+\beta(4)-\frac{\pi^{3}}{16}\ln2\right)\right]
-\frac{2}{(2k+1)^{2}}
\left\{-\frac{\pi^{2}}{16}+\frac{k}{2k+1}\right.\nonumber
\\
&\times\left(\frac{\pi^{2}}{8}\left.-\sum_{j=1}^{k}(-1)^{j+1}
\frac{\pi+H_{\frac{2k-1-2j}{4}}-H_{\frac{2k-2j+1}{4}}}{2(k+1-j)}
-(-1)^{k}2G\right.\right.\nonumber
\\
&+(-1)^{k}\left.\frac{\pi\ln2}{2}\right)-2\left[\sum_{j=1}^{k}\frac{(-1)^{j+1}}{64(k+1-j)^{2}}
\left(4\psi^{(0)}
\left(\frac{2k+5-2j}{4}\right)\right.\right.\nonumber\\
&-4\psi^{(0)}\left(\frac{2k+3-2j}{4}\right)-4\pi-2(k+1-j)\left(\psi^{(1)}\left(\frac{2k+5-2j}{4}\right)
\right.\nonumber\\
&-\left.\left.\left.\psi^{(1)}\left(\frac{2k+3-2j}{4}\right)\right)\right)
+\frac{(-1)^{k}}{96}(3\pi^{3}
+6\pi(\ln2)^{2}-192W(3))\right]\nonumber
\\ &+2\left[\sum_{j=1}^{k}(-1)^{j+1}\frac{\pi+H_{\frac{2k-1-2j}{4}}-H_{\frac{2k+1-2j}{4}}}{8(k+1-j)}\right.
\nonumber\\&+\left.\left.\frac{(-1)^{k}}{8}(4G-\pi\ln 2)\right]\right\},
\\
\label{EQPTH8.2}
I(2k+3,2,2,0)=&\int_{0}^{1}x^{2k+1}(\arctan (x))^{2}(\ln (x))^{2}dx=-\frac{1}{k+1}\left\{\sum_{j=1}^{k+1}\frac{(-1)^{j+1}}{64(2k+3-2j)^{3}}\right.\nonumber\\
&\times\left[32\pi
-32\psi^{(0)}\left(\frac{k+3-j}{2}\right)+32\psi^{(0)}\left(\frac{k+2-j}{2}\right)\right.\nonumber
\\
&+8(2k+3-2j)\left(\psi^{(1)}\left(\frac{k+3-j}{2}\right)-\psi^{(1)}\left(\frac{k+2-j}{2}\right)\right)\nonumber\\
&\left.-(2k+3-2j)^{2}\left(\psi^{(2)}\left(\frac{k+3-j}{2}\right)
-\psi^{(2)}\left(\frac{2+k-j}{2}\right)\right)\right]\nonumber\\
&+(-1)^{k+1}\left[\frac{1}{2}\left(\frac{11}{4}\zeta(4)-\frac{7}{4}\zeta(3)\ln2
+\frac{1}{2}\zeta(2)\ln^{2}2-\frac{1}{12}\ln^{4}2\right.\right.\nonumber
\\
&-\left.\left.\left.2Li_{4}\left(\frac{1}{2}\right)\right)
-\frac{25}{128}\zeta(4)\right]\right\}-\frac{1}{2(k+1)^{2}}\left\{
-\frac{\pi^{2}}{16}+\frac{2k+1}{4(k+1)}\right.\nonumber
\\
&\times\left.\left(\frac{\pi^{2}}{8}
-\sum_{j=1}^{k+1}(-1)^{j+1}\frac{\pi+H_{\frac{k-j}{2}}
-H_{\frac{k-j+1}{2}}}{2k+3-2j}+\frac{(-1)^{k}\pi^{2}}{8}\right)\right.\nonumber
\\
&-2\left[\sum_{j=1}^{k+1}(-1)^{j+1}\frac{1}{16(2k+3-2j)^{2}}
\left(-4\pi+4\psi^{(0)}\left(\frac{k+3-j}{2}\right)\right.\right.\nonumber\\
&-4\psi^{(0)}\left(\frac{k+2-j}{2}\right)-(2k+3-2j)\psi^{(1)}\left(\frac{k+3-j}{2}\right)\nonumber\\
&+\left.\left.(2k+3-2j)\psi^{(1)}\left(\frac{k+2-j}{2}\right)\right)+\frac{(-1)^{k+1}}{16}(-4G\pi+7\zeta(3))\right]\nonumber \\
&+\left.2\left[\sum_{j=1}^{k+1}(-1)^{j+1}\frac{\pi+H_{\frac{k-j}{2}}-H_{\frac{k+1-j}{2}}}{4(2k+3-2j)}
+\frac{(-1)^{k+1}\pi^{2}}{32}\right]\right\}.
\end{align}
\end{lemma}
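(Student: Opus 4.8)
The plan is to derive both \eqref{EQPTH8.1} and \eqref{EQPTH8.2} from a single integration by parts, in the same spirit as the proofs of Lemmas \ref{PTH4} and \ref{PTH6}. Writing $a=2k$ or $a=2k+1$, I would integrate over $[0,1]$ the identity obtained by differentiating the product $x^{a+1}(\arctan x)^{2}(\ln x)^{2}$. The boundary term vanishes: at $x=1$ because $\ln 1=0$, and at $x=0$ because $x^{a+1}(\arctan x)^{2}(\ln x)^{2}\sim x^{a+3}(\ln x)^{2}\to 0$. Hence
\begin{align*}
(a+1)\int_{0}^{1}x^{a}(\arctan x)^{2}(\ln x)^{2}\,dx
&=-2\int_{0}^{1}\frac{x^{a+1}\arctan x\,(\ln x)^{2}}{1+x^{2}}\,dx\\
&\quad-2\int_{0}^{1}x^{a}(\arctan x)^{2}\ln x\,dx .
\end{align*}

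Next I would split according to parity. For $a=2k$ the two integrals on the right are exactly $I(2k+2,1,2,1)$ and $I(2k+2,2,1,0)$, whose closed forms are \eqref{EQPTH7.2} and \eqref{EQPTH4.1}; dividing through by $2k+1$ and observing that the factor $\tfrac{1}{2k+1}$ already present in \eqref{EQPTH4.1} produces the $\tfrac{1}{(2k+1)^{2}}$ prefactor yields \eqref{EQPTH8.1}. For $a=2k+1$ the two integrals are $I(2k+3,1,2,1)$ and $I(2k+3,2,1,0)$; here $I(2k+3,1,2,1)$ is the integral of \eqref{EQPTH7.1} with $k$ replaced by $k+1$ (so that $1+2k-2j$ becomes $2k+3-2j$ and the polygamma arguments $\tfrac{k+2-j}{2},\tfrac{1+k-j}{2}$ become $\tfrac{k+3-j}{2},\tfrac{k+2-j}{2}$), while $I(2k+3,2,1,0)$ is given by \eqref{EQPTH4.2}; dividing by $2k+2$ gives \eqref{EQPTH8.2}. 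Unlike in Lemma \ref{PTH6}, no further appeal to the decomposition \eqref{DXSCF} is needed here, because after this integration by parts the remaining $(1+x^{2})^{-1}$-integral is already exactly of the type evaluated in Lemma \ref{PTH7}.

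The analytic content is therefore minimal: the only genuine justification required is the termwise interchange needed to drop the boundary term, which is immediate since each defining series converges absolutely on $[0,1)$ and the integrand is integrable at $x=1$. The main obstacle is bookkeeping — correctly aligning the long right-hand sides of \eqref{EQPTH7.1}, \eqref{EQPTH7.2}, \eqref{EQPTH4.1}, \eqref{EQPTH4.2} with the displayed forms \eqref{EQPTH8.1}--\eqref{EQPTH8.2}, tracking the shift $k\mapsto k+1$ in the odd case and the prefactors $\tfrac{-2}{2k+1},\tfrac{-2}{(2k+1)^{2}}$ (respectively $\tfrac{-1}{k+1},\tfrac{-1}{2(k+1)^{2}}$), and using elementary simplifications such as $\tfrac{2k+3-2j}{4}=\tfrac{3+2k-2j}{4}$ to match the polygamma arguments.
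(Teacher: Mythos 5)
Your proposal is correct and follows essentially the same route as the paper: the identical integration by parts giving $(a+1)\int_{0}^{1}x^{a}(\arctan x)^{2}(\ln x)^{2}dx=-2\int_{0}^{1}\frac{x^{a+1}\arctan x\,(\ln x)^{2}}{1+x^{2}}dx-2\int_{0}^{1}x^{a}(\arctan x)^{2}\ln x\,dx$, followed by substituting the closed forms from Lemmas \ref{PTH7} and \ref{PTH4} (with the shift $k\mapsto k+1$ in the odd case) and tracking the prefactors. Your bookkeeping of the prefactors $\tfrac{-2}{2k+1},\tfrac{-2}{(2k+1)^{2}}$ and $\tfrac{-1}{k+1},\tfrac{-1}{2(k+1)^{2}}$ matches the displayed formulas, so nothing is missing.
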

\begin{proof}
Integrating by parts, then
\[I(a+2,2,2,0)=\frac{-2}{a+1}\int_{0}^{1}\frac{x^{a+1}\arctan (x)(\ln (x))^{2}}{1+x^{2}}dx-\frac{2}{a+1}\int_{0}^{1}x^{a}\ln x (\arctan (x))^{2}dx,\]
by Lemma \ref{PTH4} and Lemma \ref{PTH7}, Lemma \ref{PTH8} is proved.
\end{proof}

By taking special values for the parameters $a,p,q$, we  evaluate  closed forms for $I ( a, p, q, r )$ when $r = 0$ or $r=1$. But for $r \geq 2$, we provide its structure representations.
\vspace{-0.1cm}
\begin{proposition}\label{MT1}
Let $p=0, q=1,  r>1, r, k\in  \mathbb{N}$,  then
\begin{numcases}{\int_{0}^{1}\frac{x^{a}\ln(x)}{(1+x^{2})^{r}}dx=}
l_{0}+l_{1}\ln2+l_{2}\pi^{2},& \text{$a=2k-1,$} \label{CS1.1} \\
l_{0}+l_{1}G+l_{2}\pi,& \text{$a=2k,$}\label{CS1.2}
\end{numcases}
where  $l_{i}(i=1,2,3,\cdot\cdot\cdot)$ depends on $a, r$ and belongs to $ \mathbb{Q}. $
\end{proposition}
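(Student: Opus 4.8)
The plan is to remove the factor $(1+x^{2})^{r}$ from the denominator by a \emph{finite} binomial expansion of $x^{a}$ in powers of $1+x^{2}$, treating the two parities of $a$ separately. If $a=2m$, then $x^{2m}=((1+x^{2})-1)^{m}=\sum_{j=0}^{m}\binom{m}{j}(-1)^{m-j}(1+x^{2})^{j}$, so
\[
\int_{0}^{1}\frac{x^{a}\ln x}{(1+x^{2})^{r}}\,dx=\sum_{j=0}^{m}\binom{m}{j}(-1)^{m-j}\int_{0}^{1}(1+x^{2})^{j-r}\ln x\,dx .
\]
If $a=2m+1$, the same expansion applied to $x^{2m}$ together with the substitution $t=x^{2}$ (so that $\ln x=\tfrac12\ln t$) gives
\[
\int_{0}^{1}\frac{x^{a}\ln x}{(1+x^{2})^{r}}\,dx=\frac14\sum_{j=0}^{m}\binom{m}{j}(-1)^{m-j}\int_{0}^{1}(1+t)^{j-r}\ln t\,dt .
\]
In both cases we have reduced, with rational coefficients depending only on $a$ and $r$, to the one-parameter integrals $J_{s}:=\int_{0}^{1}(1+x^{2})^{s}\ln x\,dx$ (even $a$) and $K_{s}:=\int_{0}^{1}(1+t)^{s}\ln t\,dt$ (odd $a$), for integers $s$ with $-r\le s\le m-r$.

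Next I would evaluate these base integrals. For $s\ge0$ one expands $(1+x^{2})^{s}$, resp.\ $(1+t)^{s}$, and uses $\int_{0}^{1}x^{n}\ln x\,dx=-1/(n+1)^{2}$, so $J_{s},K_{s}\in\mathbb{Q}$. For $s=-1$ we have the classical values $J_{-1}=\int_{0}^{1}\frac{\ln x}{1+x^{2}}\,dx=-G$ and $K_{-1}=\int_{0}^{1}\frac{\ln t}{1+t}\,dt=-\frac{\pi^{2}}{12}$. For $s=-\ell$ with $\ell\ge2$ I would integrate by parts against the antiderivative vanishing at $0$. In the even case take $V_{\ell}(x)=\int_{0}^{x}(1+u^{2})^{-\ell}\,du$; the standard reduction
\[
\int\frac{du}{(1+u^{2})^{\ell}}=\frac{u}{2(\ell-1)(1+u^{2})^{\ell-1}}+\frac{2\ell-3}{2(\ell-1)}\int\frac{du}{(1+u^{2})^{\ell-1}}
\]
shows $V_{\ell}(x)=R_{\ell}(x)+c_{\ell}\arctan x$ with $c_{\ell}\in\mathbb{Q}$ and $R_{\ell}$ an odd rational function, regular on $[0,1]$, with $R_{\ell}(0)=0$. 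Since the boundary terms vanish,
\[
\int_{0}^{1}\frac{\ln x}{(1+x^{2})^{\ell}}\,dx=-\int_{0}^{1}\frac{R_{\ell}(x)}{x}\,dx-c_{\ell}\int_{0}^{1}\frac{\arctan x}{x}\,dx ,
\]
and $\int_{0}^{1}\frac{\arctan x}{x}\,dx=G$, while $R_{\ell}(x)/x$ is an even rational function whose denominator is a power of $1+x^{2}$, so $\int_{0}^{1}R_{\ell}(x)/x\,dx$ is a rational combination of the integrals $\int_{0}^{1}(1+x^{2})^{e}\,dx$, $e\in\mathbb{Z}$, each of which lies in $\mathbb{Q}+\mathbb{Q}\pi$ (again by the reduction formula, using $\int_{0}^{1}du/(1+u^{2})=\pi/4$). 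Hence $J_{-\ell}\in\mathbb{Q}+\mathbb{Q}G+\mathbb{Q}\pi$. In the odd case the analogous step with $W_{\ell}(t)=\frac{(1+t)^{1-\ell}-1}{1-\ell}$ gives $K_{-\ell}=-\int_{0}^{1}W_{\ell}(t)/t\,dt$, and since
\[
\frac{(1+t)^{1-\ell}-1}{t}=-\frac{\sum_{i=1}^{\ell-1}\binom{\ell-1}{i}t^{i-1}}{(1+t)^{\ell-1}}
\]
is rational and regular on $[0,1]$, the substitution $u=1+t$ reduces $K_{-\ell}$ to rational combinations of $\int_{1}^{2}u^{-\nu}\,du$, which is a rational number for $\nu\neq1$ and equals $\ln2$ for $\nu=1$; thus $K_{-\ell}\in\mathbb{Q}+\mathbb{Q}\ln2$ for $\ell\ge2$.

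Substituting these evaluations back into the two finite sums, in the even case $a=2k$ every summand lies in $\mathbb{Q}+\mathbb{Q}G+\mathbb{Q}\pi$ and in the odd case $a=2k-1$ every summand lies in $\mathbb{Q}+\mathbb{Q}\ln2+\mathbb{Q}\pi^{2}$; collecting terms yields \eqref{CS1.1} and \eqref{CS1.2} with rational coefficients $l_{i}$ that are explicitly determined by $a$ and $r$. The only point requiring genuine care — and the part I would write out in full — is the bookkeeping: verifying that $V_{\ell}$ and $W_{\ell}$ really vanish at $0$ so that the integration by parts leaves no surviving boundary term, and checking that the reduction formula for $\int du/(1+u^{2})^{\ell}$ introduces nothing beyond $\pi$, so that in particular no $\pi^{2}$ or $\ln2$ leaks into the even case and no $G$ or $\pi$ into the odd one. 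An alternative, which I would mention but not pursue, is to observe that $\int_{0}^{1}x^{a}\ln x\,(1+x^{2})^{-r}\,dx=\partial_{a}\int_{0}^{1}x^{a}(1+x^{2})^{-r}\,dx$ and to differentiate a closed form for the latter in the exponent $a$; this produces the same constants but with bulkier intermediate expressions.
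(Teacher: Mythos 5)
Your argument is correct, but it follows a genuinely different route from the paper. The paper proceeds by integration by parts to get the two-variable recurrence \eqref{DT1}, which lowers $a$ and $r$ together and drags in the companion integral $\int_{0}^{1}x^{a-2}(1+x^{2})^{1-r}\,dx$ (handled by its own reduction), and then anchors the induction at the four base values $\int_{0}^{1}\frac{\ln x}{1+x^{2}}\,dx=-G$, $\int_{0}^{1}\frac{x\ln x}{1+x^{2}}\,dx=-\frac{\pi^{2}}{48}$, $\int_{0}^{1}\frac{dx}{1+x^{2}}=\frac{\pi}{4}$, $\int_{0}^{1}\frac{x\,dx}{1+x^{2}}=\frac{\ln 2}{2}$; the parity statement then follows because the recurrence preserves parity of $a$. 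You instead eliminate the denominator in one stroke by the finite expansion $x^{2m}=((1+x^{2})-1)^{m}$ (plus $t=x^{2}$ in the odd case), reducing everything to the one-parameter families $J_{s}$ and $K_{s}$, and you evaluate the negative-exponent members directly by integrating by parts against an explicit antiderivative, with $\int_{0}^{1}\frac{\arctan x}{x}\,dx=G$ and $\int_{1}^{2}\frac{du}{u}=\ln 2$ as the only non-rational inputs beyond $J_{-1}=-G$ and $K_{-1}=-\frac{\pi^{2}}{12}$. This is closer in spirit to the partial-fraction reduction the paper only invokes later, in the proof of Theorem \ref{DTTH1} (equations \eqref{YBA1}--\eqref{YBA2}), and it has the merit of making the ``no leakage'' bookkeeping fully transparent: one sees exactly where $G,\pi$ (even $a$) and $\ln 2,\pi^{2}$ (odd $a$) can enter and why nothing else does. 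What the paper's recurrence buys in exchange is brevity and a template that is reused verbatim for Proposition \ref{MT2} and the higher-$q$ recurrences in Theorem \ref{DTTH1}. Your two stated verification points (vanishing of the boundary terms at $0$, and that the reduction of $\int (1+u^{2})^{-\ell}du$ produces only a rational multiple of $\arctan$ plus an odd rational function vanishing at $0$) are indeed the only places needing care, and both check out.
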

\begin{proof}Integrating by parts, for $a\geq 2$ and $r\geq 2$,
\begin{equation}\label{DT1}
\int_{0}^{1}\frac{x^{a}\ln (x)}{(1+x^{2})^{r}}dx=\frac{a-1}{2(r-1)}\int_{0}^{1}\frac{x^{a-2}\ln x}{(1+x^{2})^{r-1}}dx+\frac{1}{2(r-1)}\int_{0}^{1}\frac{x^{a-2}}{(1+x^{2})^{r-1}}dx.
\end{equation}
By  \eqref{DT1}, we need to consider the third integral and the cases of  $a=0, r=1 $ and $a=1, r=1$: \vspace{-0.3cm}
\begin{align}
\int_{0}^{1}\frac{\ln(x)}{(1+x^{2})}dx=&-G
,\label{OS1.1}\\
\int_{0}^{1}\frac{x\ln(x)}{(1+x^{2})}dx=&-\frac{\pi^{2}}{48}\label{JS1.1}.
\end{align}
For the third integral in \eqref{DT1},  one has
\[\int_{0}^{1}\frac{x^{a}}{(1+x^{2})^{r}}dx=
\frac{a-1}{3-2r-a}\int_{0}^{1}\frac{x^{a-2}}{(1+x^{2})^{r}}dx
-\frac{2^{r-1}}{3-2r-a}.\]
When $a = 0$,
\begin{equation}\label{YY1}
\int_{0}^{1}\frac{1}{(1+x^{2})^{r}}dx=\frac{1}{2^{r}(r-1)}
+\frac{2r-3}{2(r-1)}\int_{0}^{1}\frac{1}{(1+x^{2})^{r-1}}dx,\end{equation}
and its  initial value is
\begin{equation}\label{OS1.2}
\int_{0}^{1}\frac{1}{1+x^{2}}dx=\frac{\pi}{4}.\end{equation}
When $a = 1$,
\[\int_{0}^{1}\frac{x}{(1+x^{2})^{r}}dx=\frac{2^{-1-r}(-2+2^{r})}{-1+r},\]
the initial value is\vspace{-0.3cm}
\begin{equation}\label{JS1.2}
\int_{0}^{1}\frac{x}{1+x^{2}}dx=\frac{\ln 2}{2}.\end{equation}
Then, if $a$ is  even, $I(a,0,1,r)$ depends on \eqref{OS1.1} and \eqref{OS1.2}; if $a$ is odd, $I(a,0,1,r)$ depends on \eqref{JS1.1} and \eqref{JS1.2}, and Proposition \ref{MT1} is proved.
\end{proof}

\begin{proposition}\label{MT2}
Let $p=0, q=2, a, r\geq 2, r, k\in  \mathbb{N},$
then
\begin{numcases}{\int_{0}^{1}\frac{x^{a}\ln^{2}(x)}{(1+x^{2})^{r}}dx=}
l_{1}\zeta(3)+l_{2}\pi^{2}+l_{3}\ln2+l_{4},& \text{$a=2k-1,$} \label{CS2.3} \\
l_{1}\pi^{3}+l_{2}G+l_{3}\pi+l_{4},& \text{$a=2k,$} \label{CS2.4}
\end{numcases}
where $l_{i}(i=1,2,3,\cdot\cdot\cdot)$ depends on $a, r$ and belongs to $ \mathbb{Q}.$
\end{proposition}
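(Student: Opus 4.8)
The plan is to mirror the proof of Proposition~\ref{MT1}, replacing the single logarithm by a square and feeding the power‑one logarithmic integrals that appear back into Proposition~\ref{MT1}. First I would establish, by integration by parts with $dv=\frac{x\,dx}{(1+x^{2})^{r}}$ and $u=x^{a-1}\ln^{2}x$ (the boundary terms vanish because $a\ge 2$ forces $x^{a-1}\ln^{2}x\to 0$ at $0$ while $\ln^{2}1=0$), the reduction
\begin{equation*}
\int_{0}^{1}\frac{x^{a}\ln^{2}(x)}{(1+x^{2})^{r}}dx
=\frac{a-1}{2(r-1)}\int_{0}^{1}\frac{x^{a-2}\ln^{2}(x)}{(1+x^{2})^{r-1}}dx
+\frac{1}{r-1}\int_{0}^{1}\frac{x^{a-2}\ln(x)}{(1+x^{2})^{r-1}}dx ,
\end{equation*}
valid for $a\ge 2$, $r\ge 2$. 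The second integral on the right is exactly $I(a-2,0,1,r-1)$; since $a-2$ has the same parity as $a$, Proposition~\ref{MT1} (when $r-1\ge 2$) puts it in the shape $l_{0}+l_{1}\ln 2+l_{2}\pi^{2}$ for $a$ odd and $l_{0}+l_{1}G+l_{2}\pi$ for $a$ even. When $r-1=1$ I would instead expand $\frac{x^{a-2}}{1+x^{2}}$ via~\eqref{DXSCF} and use $\int_{0}^{1}x^{m}\ln x\,dx=-\tfrac{1}{(m+1)^{2}}$ together with $\int_{0}^{1}\frac{\ln x}{1+x^{2}}dx=-G$ and $\int_{0}^{1}\frac{x\ln x}{1+x^{2}}dx=-\tfrac{\pi^{2}}{48}$, which again lands in those two shapes.

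Next I would iterate the reduction $(a,r)\mapsto(a-2,r-1)$ until reaching a base situation: $r$ drops to $1$, or the exponent drops to $0$ (even $a$) or $1$ (odd $a$). Two kinds of $\ln^{2}$ integral survive, and each must be shown to lie in the asserted rational span. For $\int_{0}^{1}\frac{x^{m}\ln^{2}(x)}{1+x^{2}}dx$ with $m\ge 0$, expanding by~\eqref{DXSCF} and using $\int_{0}^{1}x^{j}\ln^{2}x\,dx=\tfrac{2}{(j+1)^{3}}$ reduces it to $\int_{0}^{1}\frac{\ln^{2}x}{1+x^{2}}dx=\tfrac{\pi^{3}}{16}$ (even $m$) or $\int_{0}^{1}\frac{x\ln^{2}x}{1+x^{2}}dx=\tfrac{3}{16}\zeta(3)$ (odd $m$), so the former is $l_{1}\pi^{3}+l_{4}$ and the latter $l_{1}\zeta(3)+l_{4}$. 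For $P_{r}:=\int_{0}^{1}\frac{\ln^{2}x}{(1+x^{2})^{r}}dx$, writing $\frac{1}{(1+x^{2})^{r}}=\frac{1}{(1+x^{2})^{r-1}}-\frac{x^{2}}{(1+x^{2})^{r}}$ and integrating the last term by parts gives $P_{r}=\frac{2r-3}{2(r-1)}P_{r-1}-\frac{1}{r-1}Q_{r-1}$ with $Q_{r}:=\int_{0}^{1}\frac{\ln x}{(1+x^{2})^{r}}dx$ already of the form $l_{0}+l_{1}G+l_{2}\pi$ by Proposition~\ref{MT1}; with $P_{1}=\tfrac{\pi^{3}}{16}$ this forces $P_{r}=l_{1}\pi^{3}+l_{2}G+l_{3}\pi+l_{4}$. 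The substitution $x^{2}=t$ turns $\int_{0}^{1}\frac{x\ln^{2}x}{(1+x^{2})^{r}}dx$ into $\tfrac{1}{8}\int_{0}^{1}\frac{\ln^{2}t}{(1+t)^{r}}dt$, for which the analogous one‑step recursion lowers $r$, involving the companion $\int_{0}^{1}\frac{\ln t}{(1+t)^{r}}dt$ (itself a rational combination of $\ln 2$, $-\tfrac{\pi^{2}}{12}$ and rationals) and the base value $\int_{0}^{1}\frac{\ln^{2}t}{1+t}dt=\tfrac{3}{2}\zeta(3)$, landing in $l_{1}\zeta(3)+l_{2}\pi^{2}+l_{3}\ln 2+l_{4}$.

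Assembling the pieces, every term produced for odd $a$ involves only $\zeta(3),\pi^{2},\ln 2$ and rationals, and every term for even $a$ only $\pi^{3},G,\pi$ and rationals, with all coefficients rational functions of $a$ and $r$ evaluated at integers, hence in $\mathbb{Q}$. The step I expect to require genuine care is the bookkeeping: I must make sure the recursion is never applied once the exponent drops below $2$ (so the vanishing of the integrated terms is legitimate), switching at that instant to the correct base evaluation, and I must verify that no "wrong‑parity" constant ever leaks in — no $G$ or $\pi$ for odd $a$, no $\zeta(3)$ or $\ln 2$ for even $a$. This closure is exactly what is guaranteed by invoking Proposition~\ref{MT1} only with the matching parity and by checking that the two auxiliary recursions for $P_{r}$ and for $\int_{0}^{1}\frac{\ln^{2}t}{(1+t)^{r}}dt$ stay inside their respective spans.
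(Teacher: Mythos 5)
Your proposal follows essentially the same route as the paper: the identical integration-by-parts reduction (the paper's \eqref{DT2}), feeding the resulting power-one logarithmic integrals into Proposition \ref{MT1}, and terminating at the base values $\int_{0}^{1}\frac{x\ln^{2}(x)}{1+x^{2}}dx=\frac{3}{16}\zeta(3)$ and $\int_{0}^{1}\frac{\ln^{2}(x)}{1+x^{2}}dx=\frac{\pi^{3}}{16}$. Your additional bookkeeping for the endgame cases (when $r$ drops to $1$ with $a\ge 2$, or $a$ drops to $0$ or $1$ with $r\ge 2$, handled via \eqref{DXSCF} and the auxiliary recursions) is correct and merely fills in details the paper's terse proof leaves implicit.
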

\begin{proof}
Integrating by parts, we get
\begin{equation}\label{DT2}
\int_{0}^{1}\frac{x^{a}\ln^{2} (x)}{(1+x^{2})^{r}}dx=\frac{a-1}{2(r-1)}\int_{0}^{1}\frac{x^{a-2}\ln^{2} (x)}{(1+x^{2})^{r-1}}dx+\frac{1}{(r-1)}\int_{0}^{1}\frac{x^{a-2}\ln(x)}{(1+x^{2})^{r-1}}dx.
\end{equation}
Since\vspace{-0.3cm}
\begin{align}
\int_{0}^{1}\frac{x\ln^{2}(x)}{(1+x^{2})}dx=&\frac{3}{16}\zeta(3)\label{CS2.1},\\
\int_{0}^{1}\frac{\ln^{2}(x)}{(1+x^{2})}dx=&\frac{\pi^{3}}{16}.\label{CS2.2}
\end{align}
By  \eqref{DT2} and Proposition \ref{MT1}, when $a$ is even, $I(a,0,2,r)$ depends on \eqref{CS2.2} and \eqref{CS1.2}; When $a$ is odd, $I(a,0,2,r)$ depends on \eqref{CS2.1} and \eqref{CS1.1}. Therefore, Proposition \ref{MT2} is proved.
\end{proof}

\begin{theorem}\label{DTTH1}   
Let $r, n, q\geq1$ be natural numbers,  then
\begin{align}
\int_{0}^{1}\frac{x^{2n-1}\ln^{q}(x)}{(1+x^{2})^{r}}dx=&
l_{0}\ln2+\sum_{i=1}^{q}l_{i}\zeta(i+1)+l_{q+1},\\
\int_{0}^{1}\frac{x^{2n}\ln^{2k-1}(x)}{(1+x^{2})^{r}}dx=&
l_{0}G+\sum_{i=1}^{k}l_{i}\pi^{2i-1}\nonumber\\
&+\sum_{j=2}^{k}c_{j}
\left(\psi^{(2j-1)}\left(\frac{1}{4}\right)
-\psi^{(2j-1)}\left(\frac{3}{4}\right)\right)+l_{k+1},\\
\int_{0}^{1}\frac{x^{2n}\ln^{2k}(x)}{(1+x^{2})^{r}}dx=&
l_{0}G+\sum_{i=1}^{k+1}l_{i}\pi^{2i-1}\nonumber\\
&+\sum_{j=2}^{k}c_{j}
\left(\psi^{(2j-1)}\left(\frac{1}{4}\right)
-\psi^{(2j-1)}\left(\frac{3}{4}\right)\right)+l_{k+2},
\end{align}
where  all rational coefficients $l_{i},c_{j}(i,j=1,2,3,\ldots)$ depend on the values of $n, r, k$.
\end{theorem}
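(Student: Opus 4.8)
The plan is to prove the three identities together, by induction on $r$ with a subordinate induction on the logarithmic power, reducing in each step to the base case $r=1$, in the same spirit as Propositions~\ref{MT1} and~\ref{MT2}. The basic tool is the following recurrence. For $a\geq2$ and $r\geq2$, integrating by parts with $u=x^{a-1}\ln^{q}(x)$ and $dv=x(1+x^{2})^{-r}\,dx$ (so that $v=-\tfrac{1}{2}(r-1)^{-1}(1+x^{2})^{-(r-1)}$, and both boundary terms vanish because $\ln 1=0$ and $a-1\geq1$) gives
\[
\int_{0}^{1}\frac{x^{a}\ln^{q}(x)}{(1+x^{2})^{r}}\,dx=\frac{a-1}{2(r-1)}\int_{0}^{1}\frac{x^{a-2}\ln^{q}(x)}{(1+x^{2})^{r-1}}\,dx+\frac{q}{2(r-1)}\int_{0}^{1}\frac{x^{a-2}\ln^{q-1}(x)}{(1+x^{2})^{r-1}}\,dx,
\]
which generalises \eqref{DT1} and \eqref{DT2}. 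Each application lowers $r$ by one and $a$ by two (and $q$ by zero or one), and preserves the parity of $a$, so the even and odd cases never interact.

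Iterating until this can no longer be applied, an integral with $a=2n$ becomes a rational combination, with coefficients depending only on $n$ and $r$, of integrals $\int_{0}^{1}x^{2m}\ln^{q'}(x)(1+x^{2})^{-r'}\,dx$ with $0\leq q'\leq q$ and either $r'=1$ (any $m\geq0$) or $m=0$ (any $r'\geq1$); likewise an integral with $a=2n-1$ becomes such a combination of integrals with $r'=1$ and odd numerator exponent together with $\int_{0}^{1}x\ln^{q'}(x)(1+x^{2})^{-r'}\,dx$ for $r'\geq1$. For the residual families whose numerator exponent is stuck at $0$ or $1$ while $r'\geq2$, two further integrations by parts — using $(1+x^{2})^{-r}=(1+x^{2})^{-(r-1)}-x^{2}(1+x^{2})^{-r}$ together with the same primitive of $x(1+x^{2})^{-r}$ — give, for $q\geq1$,
\[
\int_{0}^{1}\frac{\ln^{q}(x)}{(1+x^{2})^{r}}\,dx=\frac{2r-3}{2(r-1)}\int_{0}^{1}\frac{\ln^{q}(x)}{(1+x^{2})^{r-1}}\,dx-\frac{q}{2(r-1)}\int_{0}^{1}\frac{\ln^{q-1}(x)}{(1+x^{2})^{r-1}}\,dx,
\]
\[
\int_{0}^{1}\frac{x\ln^{q}(x)}{(1+x^{2})^{r}}\,dx=\frac{r-2}{r-1}\int_{0}^{1}\frac{x\ln^{q}(x)}{(1+x^{2})^{r-1}}\,dx-\frac{q}{2(r-1)}\int_{0}^{1}\frac{x\ln^{q-1}(x)}{(1+x^{2})^{r-1}}\,dx,
\]
so these also descend to $r'=1$; the $q=0$ members are already handled by Proposition~\ref{MT1}, by \eqref{YY1} and by the recurrence for $\int_{0}^{1}x^{a}(1+x^{2})^{-r}\,dx$, and contribute only rationals together with a rational multiple of $\ln2$ (odd exponent) or of $\pi$ (even exponent).

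It remains to evaluate the base cases at $r=1$. Expanding $(1+x^{2})^{-1}=\sum_{k\geq0}(-x^{2})^{k}$ and using $\int_{0}^{1}x^{s}\ln^{q'}(x)\,dx=(-1)^{q'}q'!\,(s+1)^{-q'-1}$, then reindexing (the interchange of sum and integral being legitimate by Abel's theorem, with absolute convergence when $q'\geq1$ and conditional convergence when $q'=0$), one gets
\[
\int_{0}^{1}\frac{x^{2m}\ln^{q'}(x)}{1+x^{2}}\,dx=(-1)^{m+q'}q'!\,\beta(q'+1)+(\text{rational}),\qquad\int_{0}^{1}\frac{x^{2m-1}\ln^{q'}(x)}{1+x^{2}}\,dx=\frac{(-1)^{m+q'+1}q'!}{2^{q'+1}}\,\eta(q'+1)+(\text{rational}),
\]
where $\eta(s)=(1-2^{1-s})\zeta(s)$. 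Now $\eta(1)=\ln2$, and $\eta(s)$ is a rational multiple of $\zeta(s)$ for $s\geq2$; while $\beta(1)=\pi/4$ and, more generally, $\beta(2t+1)$ is a rational multiple of $\pi^{2t+1}$ for every $t\geq0$ (Euler's evaluation of the Dirichlet beta function at the odd integers), whereas for $t\geq1$ one has $\beta(2t)=\frac{1}{2^{4t}(2t-1)!}\big(\psi^{(2t-1)}(\tfrac{1}{4})-\psi^{(2t-1)}(\tfrac{3}{4})\big)$ by the identity of \cite{KK} recalled in Section~\ref{intro}, with $\beta(2)=G$.

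Feeding these values back through the reduction, an integral with $a=2n-1$ and logarithmic power $q$ accumulates only $\zeta(2),\dots,\zeta(q+1)$, the constant $\ln2$, and rationals, which is the first identity; an integral with $a=2n$ accumulates $\beta(1),\dots,\beta(q+1)$ and rationals, where the odd-index values contribute the powers $\pi,\pi^{3},\dots$ up to $\pi^{2k-1}$ when $q=2k-1$ and up to $\pi^{2k+1}$ when $q=2k$, while the even-index values contribute $G=\beta(2)$ together with $\beta(4),\dots,\beta(2k)$, the latter rewritten as $\psi^{(2j-1)}(\tfrac{1}{4})-\psi^{(2j-1)}(\tfrac{3}{4})$ for $j=2,\dots,k$, the remainder being rational — which is precisely the second and third identities, with all coefficients rational functions of $n,r,k$. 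I expect the only genuine difficulty to be the bookkeeping in the reduction step: one has to check that the iterations always terminate exactly at the base cases above without ever producing a negative or non-integral power of $x$ (this is why the two auxiliary reductions at numerator exponents $0$ and $1$ are needed), and that the parity of the logarithmic power is what decides whether $\beta$ is met at an even argument (a genuinely new constant, encoded by a polygamma difference) or at an odd argument (a rational multiple of a power of $\pi$). Granting the quoted closed forms for $\eta$ and $\beta$, everything else is a routine induction.
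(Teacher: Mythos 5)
Your argument is correct, and it reaches the conclusion by a genuinely different organization than the paper's proof. The paper first applies partial fractions in $x$ (its \eqref{YBA1}--\eqref{YBA2}) to strip the numerator down to $1$ or $x$, and then runs the two-index recurrences \eqref{DG1}--\eqref{DG2} for $A(q,r)$ and $B(q,r)$, obtained by integrating by parts against $d(x\ln x-x)$; the base cases are $A(q,1)$, $B(q,1)$ in closed form (an $\eta$-value and a polygamma difference, respectively) together with $A(1,r),A(2,r),B(1,r),B(2,r)$ from Propositions \ref{MT1} and \ref{MT2}. You instead extend \eqref{DT1}--\eqref{DT2} to arbitrary $q$, so that a single integration by parts lowers $(a,r)$ to $(a-2,r-1)$ while preserving the parity of $a$, add the two auxiliary descents in $r$ for numerator degree $0$ and $1$, and evaluate every $r=1$ base case directly by the geometric expansion, expressing them through $\eta(q'+1)$ and $\beta(q'+1)$ and then invoking Euler's evaluation of $\beta$ at odd arguments and the K\"olbig identity $\psi^{(2t-1)}(\tfrac14)-\psi^{(2t-1)}(\tfrac34)=2^{4t}(2t-1)!\,\beta(2t)$ already quoted in the introduction; your $r=1$ values are exactly the paper's $A(q,1)$, $B(q,1)$ in different notation. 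Your route is more uniform and self-contained: one recurrence treats all $q$ at once, no partial-fraction step is needed, Propositions \ref{MT1}--\ref{MT2} enter only through the harmless $q'=0$ leftovers, and the dichotomy $\zeta$ versus $\{\pi^{2i-1},G,\psi^{(2j-1)}\}$ is explained transparently by the parity of the numerator and the parity of the argument of $\beta$. The paper's route, by contrast, isolates the two fixed families $A(q,r)$, $B(q,r)$ early, which makes the recurrences more convenient for explicitly computing the rational coefficients $l_i,c_j$ in concrete cases and reuses earlier results. Two small points to tidy up in a final write-up: justify the sum--integral interchange at $r=1$ by absolute convergence for $q'\geq1$ (Tonelli) and by a remainder estimate for $q'=0$ rather than a bare appeal to Abel's theorem, and state explicitly the termination argument you sketch (each step strictly decreases $r$, or decreases $a$ with $r$, and the numerator parity is preserved), since that is what guarantees you land only on the listed base cases.
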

\begin{proof}
Define
\[A( q, r )=\int_{0}^{1}\frac{x\ln^{q}(x)}{(1+x^{2})^{r}}dx,\,\,\, B( q, r )=\int_{0}^{1}\frac{\ln^{q}(x)}{(1+x^{2})^{r}}dx.\]
Integrating by parts, for $q\geq 3, r\geq 1$,
\begin{align}\label{DGGS1}
A( q, r )=\int_{0}^{1}\frac{x\ln^{q}(x)}{(1+x^{2})^{r}}dx=&\int_{0}^{1}\frac{x\ln^{q-1}(x)}{(1+x^{2})^{r}}d (x\ln(x)-x)=-\int_{0}^{1}\frac{x\ln^{q}(x)}{(1+x^{2})^{r}}dx\nonumber\\
&+\int_{0}^{1}\frac{x\ln^{q-1}(x)}{(1+x^{2})^{r}}dx
-(q-1)\int_{0}^{1}\frac{x\ln^{q-1}(x)}{(1+x^{2})^{r}}dx\nonumber\\
&+(q-1)\int_{0}^{1}\frac{x\ln^{q-2}(x)}{(1+x^{2})^{r}}dx
+2r\int_{0}^{1}\frac{x^{3}\ln^{q}(x)}{(1+x^{2})^{r+1}}dx\nonumber\\
&-2r\int_{0}^{1}\frac{x^{3}\ln^{q-1}(x)}{(1+x^{2})^{r+1}}dx,
\end{align}
the last two integrals can be converted into
\[\int_{0}^{1}\frac{x^{3}\ln^{q}(x)}{(1+x^{2})^{r+1}}dx=
\int_{0}^{1}\frac{x\ln^{q}(x)}{(1+x^{2})^{r}}dx+
\int_{0}^{1}\frac{x\ln^{q}(x)}{(1+x^{2})^{r+1}}dx,\]
\[\int_{0}^{1}\frac{x^{3}\ln^{q-1}(x)}{(1+x^{2})^{r+1}}dx=
\int_{0}^{1}\frac{x\ln^{q-1}(x)}{(1+x^{2})^{r}}dx+
\int_{0}^{1}\frac{x\ln^{q-1}(x)}{(1+x^{2})^{r+1}}dx.\]
Replacing $r + 1$ by $r$,  \eqref{DGGS1} is equivalent to
\begin{align}
A(q,r)=&a_{1}A(q,r-1)+a_{2}A(q-1,r)+a_{3}A(q-1,r-1)+a_{4}A(q-2,r-1),\label{DG1}
\end{align}
where  the value of $a_{i}$ depends on $q$ and $r$.
Similarly,\vspace{-0.1cm}
\begin{align}\label{DGGS2}
B( q, r )=&\int_{0}^{1}\frac{\ln^{q}(x)}{(1+x^{2})^{r}}dx=\int_{0}^{1}\frac{\ln^{q-1}(x)}{(1+x^{2})^{r}}d (x\ln(x)-x)\nonumber
\\
=&-(q-1)\int_{0}^{1}\frac{\ln^{q-1}(x)}{(1+x^{2})^{r}}dx
+(q-1)\int_{0}^{1}\frac{x\ln^{q-2}(x)}{(1+x^{2})^{r}}dx\nonumber\\
&+2r\int_{0}^{1}\frac{x^{2}\ln^{q}(x)}{(1+x^{2})^{r+1}}dx
-2r\int_{0}^{1}\frac{x^{2}\ln^{q-1}(x)}{(1+x^{2})^{r+1}}dx,
\end{align}
and \vspace{-0.5cm}\begin{align*}\int_{0}^{1}\frac{x^{2}\ln^{q}(x)}{(1+x^{2})^{r+1}}dx=&
\int_{0}^{1}\frac{\ln^{q}(x)}{(1+x^{2})^{r}}dx+
\int_{0}^{1}\frac{\ln^{q}(x)}{(1+x^{2})^{r+1}}dx,\\
\int_{0}^{1}\frac{x^{2}\ln^{q-1}(x)}{(1+x^{2})^{r+1}}dx=&
\int_{0}^{1}\frac{\ln^{q-1}(x)}{(1+x^{2})^{r}}dx+
\int_{0}^{1}\frac{\ln^{q-1}(x)}{(1+x^{2})^{r+1}}dx.\end{align*}
Replacing $r + 1$ by $r$,  \eqref{DGGS2} is equivalent to
\begin{align}
B(q,r)=&b_{1}B(q,r-1)+b_{2}B(q-1,r)+b_{3}B(q-1,r-1)+b_{4}B(q-2,r-1),\label{DG2}
\end{align}
where the value of $b_{i}$ depends on $q$ and $r$.
According to \eqref{DG1} and \eqref{DG2}, the value of $A(q,r)$ depends on $A (q,1),$ $A (1,r),$ $A (2,r),$ and the value of $B(q,r)$ depends on $B(q,1), B(1,r), $$B(2,r).$ Since $A(1,r), A(2,r), B(1,r)$ and $B(2,r)$ have been shown in Proposition \ref{MT1} and Proposition \ref{MT2}, we only need to consider $A ( q, 1 )$ and $B(q,1)$.
\begin{align*}A ( q, 1 )=&\int_{0}^{1}\frac{x\ln^{q}(x)}{(1+x^{2})}dx=
\frac{(-1)^{q}}{2^{1+2q}}(-1+2^{q})\Gamma(1+q)\zeta(1+q),\\
B(q,1)=&\int_{0}^{1}\frac{\ln^{q}(x)}{(1+x^{2})}dx=\frac{(-1)^{q}}{4^{1+q}}
\Gamma(1+q)\left(\zeta\left(1+q,\frac{1}{4}\right)-\zeta\left(1+q,\frac{3}{4}\right)\right)\\
=&\frac{1}{4^{1+q}q!}\Gamma(1+q)\left(\psi^{(q)}\left(\frac{1}{4}\right)
-\psi^{(q)}\left(\frac{3}{4}\right)\right).
\end{align*}
By the  recurrence relations and initial values above, combining with Proposition \ref{MT1} and Proposition \ref{MT2}, then
\begin{align*}
\int_{0}^{1}\frac{x\ln^{q}(x)}{(1+x^{2})^{r}}dx=&
l_{0}\ln2+\sum_{i=1}^{q}l_{i}\zeta(i+1)+l_{q+1},\\
\int_{0}^{1}\frac{\ln^{2k-1}(x)}{(1+x^{2})^{r}}dx=&
l_{0}G+\sum_{i=1}^{k}l_{i}\pi^{2i-1}
+\sum_{j=2}^{k}c_{j}
\left(\psi^{(2j-1)}\left(\frac{1}{4}\right)
-\psi^{(2j-1)}\left(\frac{3}{4}\right)\right)+l_{k+1},\\
\int_{0}^{1}\frac{\ln^{2k}(x)}{(1+x^{2})^{r}}dx=&
l_{0}G+\sum_{i=1}^{k+1}l_{i}\pi^{2i-1}
+\sum_{j=2}^{k}c_{j}
\left(\psi^{(2j-1)}\left(\frac{1}{4}\right)
-\psi^{(2j-1)}\left(\frac{3}{4}\right)\right)+l_{k+2}.
\end{align*}
\\For the general parameter $a$, by partial fraction theory, \vspace{-0.1cm}
\begin{align}
\int_{0}^{1}\frac{x^{2n-1}\ln^{q}(x)}{(1+x^{2})^{r}}dx=&\int_{0}^{1}x^{m}\ln^{q}(x)dx
+\sum_{i=1}^{r}C_{i}\int_{0}^{1}\frac{x\ln^{q}(x)}{(1+x^{2})^{i}}dx,\label{YBA1}\\
\int_{0}^{1}\frac{x^{2n}\ln^{q}(x)}{(1+x^{2})^{r}}dx=&\int_{0}^{1}x^{m}\ln^{q}(x)dx
+\sum_{i=1}^{r}D_{i}\int_{0}^{1}\frac{\ln^{q}(x)}{(1+x^{2})^{i}}dx\label{YBA2},
\end{align}
where the first integral is
\[\int_{0}^{1}x^{m}\ln^{q}(x)dx=(-1)^{q}(1+m)^{-1-q}\Gamma(1+q),\]
which is a rational number,  hence Theorem
\ref{DTTH1} is proved.
\end{proof}

\begin{theorem}\label{MT4}
Let $a-p<2r,$ then
\begin{align}
\int_{0}^{1}\frac{ x^{a-p}(\arctan x)^{p}}{(1+x^2)^{r}}dx=c_{0}+c_{1}\pi+c_{2}\pi^{2}+\ldots+c_{p}\pi^{p}+c_{p+1}\pi^{p+1},
\end{align}
where  rational coefficients $c_{i}(i=1,2,3,\ldots)$ depend on the values of $p, r$.
\end{theorem}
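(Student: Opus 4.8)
The plan is to argue by induction on $p$, after first removing the denominator by a partial-fraction expansion. Put $m=a-p\ge 0$. Because $m<2r$, the rational function $x^{m}/(1+x^{2})^{r}$ has no polynomial part, and repeated use of $x^{2}=(1+x^{2})-1$ (in the spirit of \eqref{DXSCF}) expresses it, with $\alpha_{j}\in\mathbb{Q}$ and smallest index $j_{0}=r-\lfloor m/2\rfloor\ge 1$, as
\[
\frac{x^{m}}{(1+x^{2})^{r}}=\sum_{j=j_{0}}^{r}\frac{\alpha_{j}}{(1+x^{2})^{j}}\ \ (m\ \text{even}),\qquad
\frac{x^{m}}{(1+x^{2})^{r}}=\sum_{j=j_{0}}^{r}\frac{\alpha_{j}\,x}{(1+x^{2})^{j}}\ \ (m\ \text{odd}).
\]
Hence it suffices to show that each member of the two atomic families
\[
P_{j}^{(p)}=\int_{0}^{1}\frac{(\arctan x)^{p}}{(1+x^{2})^{j}}\,dx,\qquad
Q_{j}^{(p)}=\int_{0}^{1}\frac{x\,(\arctan x)^{p}}{(1+x^{2})^{j}}\,dx
\]
is a polynomial in $\pi$ with rational coefficients of degree $\le p+1$ whose coefficients depend only on $p$ and $j$; recombining with the $\alpha_{j}$ then yields the assertion, with $c_{i}\in\mathbb{Q}$ depending only on $p$ and $r$.

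For $Q_{j}^{(p)}$ with $j\ge 2$, integration by parts via $\tfrac{x}{(1+x^{2})^{j}}=-\tfrac{1}{2(j-1)}\tfrac{d}{dx}(1+x^{2})^{-(j-1)}$ gives $Q_{j}^{(p)}=-\tfrac{(\pi/4)^{p}}{2^{j}(j-1)}+\tfrac{p}{2(j-1)}P_{j}^{(p-1)}$, a rational multiple of $\pi^{p}$ plus a rational multiple of a $P$-integral of arctan-power $p-1$. For $P_{j}^{(p)}$, I would write the elementary antiderivative $\int_{0}^{x}(1+t^{2})^{-j}dt=R_{j}(x)+\lambda_{j}\arctan x$, where $\lambda_{j}\in\mathbb{Q}$ and $R_{j}$ is an odd rational function with $R_{j}(0)=0$ lying in the $\mathbb{Q}$-span of $x(1+x^{2})^{-l}$ for $1\le l\le j-1$ (the classical reduction formula, iterated). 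Integrating $P_{j}^{(p)}$ by parts and using $R_{j}(1)\in\mathbb{Q}$, $\arctan 1=\tfrac{\pi}{4}$, and $\int_{0}^{1}(\arctan x)^{p}(1+x^{2})^{-1}dx=\tfrac{(\pi/4)^{p+1}}{p+1}$, one obtains $P_{j}^{(p)}$ ($j\ge2$) as rational multiples of $\pi^{p}$ and $\pi^{p+1}$ plus a $\mathbb{Q}$-combination of $Q_{l+1}^{(p-1)}$ with $2\le l+1\le j$, which the $Q$-formula turns into $\pi$-powers and $P_{l+1}^{(p-2)}$. Since $P_{1}^{(p)}=\tfrac{\pi^{p+1}}{4^{p+1}(p+1)}$, the recursion closes under a two-step induction on $p$; the base cases $p=0,1$ reduce to the rational reduction formulas for $\int(1+x^{2})^{-j}dx$ and $\int x(1+x^{2})^{-j}dx$ ($j\ge2$) together with the closed form for $P_{1}^{(p)}$ just noted, and a routine degree count shows that each inductive step raises the $\pi$-degree by at most one, so the final degree is $\le p+1$, attained through the $P_{1}^{(p)}$ term.

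The step I expect to be the real obstacle is keeping the denominator exponent away from $1$ in the $Q$-family. The integral $Q_{1}^{(p)}=\int_{0}^{1}x(\arctan x)^{p}(1+x^{2})^{-1}dx$ is \emph{not} a polynomial in $\pi$ --- indeed $Q_{1}^{(0)}=\tfrac12\ln2$ and $Q_{1}^{(1)}=\tfrac12 G-\tfrac18\pi\ln 2$ --- so a single surviving $\tfrac{x}{1+x^{2}}$ term, whether it comes from the partial-fraction expansion or is fed back through the $P_{j}^{(p)}$-recursion, would destroy the conclusion. Two checks are therefore required. First, in the $P_{j}^{(p)}$-recursion the rational part $R_{j}$ contributes only $x(1+x^{2})^{-l}$ with $l\le j-1\le r-1$, so every induced $Q$-integral has index $\ge 2$; this is immediate from the shape of the reduction formula. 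Second, when $m$ is odd the smallest index $j_{0}=r-\tfrac{m-1}{2}$ must be $\ge 2$: the hypothesis $m=a-p<2r$ gives this at once when $m$ is even, and for odd $m$ it holds as soon as $m\le 2r-3$, leaving the odd borderline value $m=2r-1$ (where $j_{0}=1$) as the one case needing separate attention. Once this index bookkeeping is settled, the two-step induction on $p$ runs without further trouble, and no constants beyond powers of $\pi$ can enter.
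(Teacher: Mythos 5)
Your proposal follows essentially the same route as the paper's proof: a partial--fraction split of $x^{a-p}/(1+x^{2})^{r}$ into the two atomic families $P_{j}^{(p)}=\int_{0}^{1}(\arctan x)^{p}(1+x^{2})^{-j}dx$ and $Q_{j}^{(p)}=\int_{0}^{1}x(\arctan x)^{p}(1+x^{2})^{-j}dx$, an integration by parts turning $Q_{j}^{(p)}$ ($j\ge 2$) into a rational multiple of $\pi^{p}$ plus a rational multiple of $P_{j}^{(p-1)}$, a recursion for the $P$-family that steps $p$ down by two, and the closed base cases such as $P_{1}^{(p)}=\pi^{p+1}/(4^{p+1}(p+1))$; the paper obtains its recursion \eqref{ZMD} by the substitution $x=\tan\theta$ rather than through the antiderivative $R_{j}(x)+\lambda_{j}\arctan x$, but the bookkeeping is the same. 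The one point worth stressing is the borderline case you flag: when $a-p=2r-1$ the partial-fraction expansion really does contain $x/(1+x^{2})$ (with coefficient $1$, from $x^{2r-1}=x((1+x^{2})-1)^{r-1}$), and $Q_{1}^{(p)}$ is not a rational polynomial in $\pi$ by any means available here --- the paper itself records $Q_{1}^{(0)}=\tfrac12\ln 2$, $Q_{1}^{(1)}=\tfrac18(4G-\pi\ln 2)$, $Q_{1}^{(2)}=\tfrac{1}{64}(16G\pi-\pi^{2}\ln 4-21\zeta(3))$ --- so no ``separate attention'' can rescue the stated conclusion in that case; the hypothesis needs to be tightened (e.g.\ $a-p$ even, or $a-p\le 2r-2$). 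Notably, the paper's own proof has the same blind spot: its reduction formula for $\int_{0}^{1}x(\arctan x)^{p}(1+x^{2})^{-r}dx$ carries the factor $1/(r-1)$ and hence only covers $r\ge 2$, while the partial-fraction sum it writes runs from $i=1$, and its final claim that this family is a polynomial in $\pi$ is false for $i=1$. So your explicit index bookkeeping is a strength rather than a defect, and apart from this boundary issue --- which is a flaw of the theorem's hypothesis as stated, shared by the published argument --- your proof is sound and matches the paper's.
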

\begin{proof}
 By partial fraction theory, then
\begin{align*}
\int_{0}^{1}\frac{x^{a-p}(\arctan x)^{p}}{(1+x^{2})^{r}}dx=\sum_{i=1}^{r}C_{i}\int_{0}^{1}\frac{\arctan^{p}(x)}{(1+x^{2})^{i}}dx
+\sum_{i=1}^{r}D_{i}\int_{0}^{1}\frac{x\arctan^{p}(x)}{(1+x^{2})^{i}} ,
\end{align*}
 where $C_{i}, D_{i}(i=1,2,3,\cdot\cdot\cdot)$ depend on $a, r$ and belong to $\mathbb{Q} $.
Since
\[\int_{0}^{1}\frac{x(\arctan(x))^{p}}{(1+x^{2})^{r}}dx=
\frac{p}{2(r-1)}\int_{0}^{1}\frac{(\arctan x)^{p-1}}{(1+x^{2})^{r}}dx-\frac{\pi^{p}}{2^{r}(r-1)4^{p}},\]
 we only need to  deal with the second integral above.  Let $x = \tan \theta$, then
\begin{align}\label{ZMD}
\int_{0}^{1}\frac{ (\arctan x)^{p}}{(1+x^2)^{r}}dx=
&\frac{2r-3}{2r-2}\int_{0}^{1}\frac{ (\arctan x)^{p}}{(1+x^2)^{r-1}}dx-\frac{p(p-1)}{(2r-2)^{2}}\int_{0}^{1}\frac{ (\arctan x)^{p-2}}{(1+x^2)^{r}}dx\nonumber\\
&+\frac{1}{2^{r}(r-1)}\left[\left(\frac{\pi}{4}\right)^{p}
+\frac{p\pi^{p-1}}{4^{p-1}(2r-2)}\right].
\end{align}
According to \eqref{ZMD},  the integral depends on
\[\int_{0}^{1}\frac{ 1}{(1+x^2)^{r}}dx,\int_{0}^{1}\frac{ (\arctan x)}{(1+x^2)^{r}}dx, \int_{0}^{1}\frac{ (\arctan x)^{p}}{(1+x^2)}dx.\]
Since\\
$(1)$
\[\int_{0}^{1}\frac{ 1}{(1+x^2)^{r}}dx=\int_{0}^{\frac{\pi}{4}}\cos^{2r-2}\theta d \theta=c_{0}+c_{1}\pi.\]
\\$(2)$
\begin{align*}
\int_{0}^{1}\frac{ (\arctan x)}{(1+x^2)^{r}}dx=&\int_{0}^{\frac{\pi}{4}}\theta\cos^{2r-2}\theta d \theta
=\int_{0}^{\frac{\pi}{4}}\theta\cos^{2r-4}\theta d \theta\\
&+\frac{\pi}{(2r-3)2^{r+1}}+\frac{1}{(2r-3)(r-1)2^{r}}-\frac{1}{(2r-3)(2r-2)},
\end{align*}
from the recurrence relation above and initial value $\int_{0}^{\frac{\pi}{4}}\theta\cos^{2}\theta d \theta=\frac{1}{64}(-8+4\pi+\pi^{2}),$
one gets
\[\int_{0}^{1}\frac{ (\arctan x)}{(1+x^2)^{r}}dx=c_{0}+c_{1}\pi+c_{2}\pi^{2}.\]
$(3)$
\[\int_{0}^{1}\frac{ (\arctan x)^{p}}{(1+x^2)}dx=\int_{0}^{\frac{\pi}{4}}\theta^{p}d\theta
=\frac{\pi^{p+1}}{4^{p+1}(p+1)}.\]
Then \eqref{ZMD} can be expressed as
\vspace{-0.1cm}
\[\int_{0}^{1}\frac{ (\arctan x)^{p}}{(1+x^2)^{r}}dx=c_{0}+c_{1}\pi+c_{2}\pi^{2}+\ldots+c_{p}\pi^{p}+c_{p+1}\pi^{p+1},\]
and\vspace{-0.1cm}
\[\int_{0}^{1}\frac{ x(\arctan x)^{p}}{(1+x^2)^{r}}dx=c_{0}+c_{1}\pi+c_{2}\pi^{2}+\ldots+c_{p}\pi^{p},\]
hence  Theorem \ref{MT4} is proved.
\end{proof}

\vspace{-0.5cm}
\section{The relation between $h_{n}$ and $t_{n}$ based on $I(a,p,q,r)$ }
\begin{theorem}\label{TH1}For $ k\in  \mathbb{N}, $
\begin{align}\label{EQTH1.1}
 \sum_{n=1}^{\infty}\frac{(-1)^{n+1}h_{n}}{n(2n+2k+1)}
 =&\sum_{n=0}^{\infty}\frac{(-1)^{n}t_{n}(1)}{2(n+1)(n+2k-1)}
 =\frac{1}{2(2k+1)}
	\left[ \frac{\pi^{2}}{8}-\sum_{j=1}^{k}(-1)^{j+1}\right.\nonumber\\
&\left.\times\frac{\pi+H_{\frac{2k-2j-1}{4}}-H_{\frac{2k-2j+1}{4}}}{2k-2j+2}
-(-1)^{k}2G+\frac{(-1)^{k}\pi\ln2}{2}\right],
\\
\label{EQTH1.2}
\sum_{n=1}^{\infty}\frac{(-1)^{n+1}h_{n}}{n(2n+2k+2)}
=&\sum_{n=0}^{\infty}\frac{(-1)^{n}t_{n}(1)}{2(n+1)(n+2k)}
	=\frac{1}{4(k+1)}\left[ \frac{\pi^{2}}{8}-\sum_{j=1}^{k+1}(-1)^{j+1} \right.\nonumber\\ &\left.\times\frac{\pi+H_{\frac{k-j}{2}}-H_{\frac{k-j+1}{2}}}{2k-2j+3}
-\frac{\pi^{2}(-1)^{k+1}}{8}\right]
 .\end{align}
\end{theorem}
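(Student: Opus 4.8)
The plan is to identify the two series on the left of \eqref{EQTH1.1} and \eqref{EQTH1.2} with the integrals $I(2k+2,2,0,0)$ and $I(2k+3,2,0,0)$, whose closed forms are already recorded in Lemma~\ref{PTH1}.

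First I would write out the Milgram expansion used in the proof of Theorem~\ref{CX1.1}, specialized to $p=2$: from $\left(\frac{\arctan x}{x}\right)^{2}=\frac{\Gamma(3)}{2^{2}}\sum_{n\ge 0}\frac{(-x^{2})^{n}t_{n}(1)}{n+1}=\frac12\sum_{n\ge 0}\frac{(-1)^{n}t_{n}(1)}{n+1}x^{2n}$ one obtains $(\arctan x)^{2}=\frac12\sum_{n\ge 0}\frac{(-1)^{n}t_{n}(1)}{n+1}x^{2n+2}$. The elementary identity $t_{n}(1)=\sum_{m=0}^{n}\frac{1}{m+\frac12}=2\sum_{m=1}^{n+1}\frac{1}{2m-1}=2h_{n+1}$, immediate from the defining formula for the Milgram constant, turns this into $(\arctan x)^{2}=\sum_{n\ge 0}\frac{(-1)^{n}h_{n+1}}{n+1}x^{2n+2}=\sum_{m\ge 1}\frac{(-1)^{m+1}h_{m}}{m}x^{2m}$, valid for $|x|\le 1$.

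Next I would multiply this expansion by $x^{2k}$, respectively by $x^{2k+1}$, and integrate over $[0,1]$, interchanging summation and integration. This is legitimate: the coefficients $h_{m}/m$ decrease to $0$ while the partial sums of $\sum_{m\ge1}(-1)^{m+1}x^{2m}$ are bounded by $1$ uniformly on $[0,1]$, so by Dirichlet's test the series of functions converges uniformly on $[0,1]$ (alternatively, integrate on $[0,1-\varepsilon]$ and let $\varepsilon\to0^{+}$, using Abel's theorem together with the convergence of $\sum_{m\ge1}\frac{(-1)^{m+1}h_{m}}{m(2m+2k+1)}$ and $\sum_{m\ge1}\frac{(-1)^{m+1}h_{m}}{m(2m+2k+2)}$). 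Since $\int_{0}^{1}x^{2m+2k}\,dx=\frac{1}{2m+2k+1}$ and $\int_{0}^{1}x^{2m+2k+1}\,dx=\frac{1}{2m+2k+2}$, this yields
\[
\int_{0}^{1}x^{2k}(\arctan x)^{2}\,dx=\sum_{m\ge1}\frac{(-1)^{m+1}h_{m}}{m(2m+2k+1)}=I(2k+2,2,0,0),
\]
\[
\int_{0}^{1}x^{2k+1}(\arctan x)^{2}\,dx=\sum_{m\ge1}\frac{(-1)^{m+1}h_{m}}{m(2m+2k+2)}=I(2k+3,2,0,0),
\]
which are the first members of the chains \eqref{EQTH1.1} and \eqref{EQTH1.2}; the Milgram-constant members arise from the identical computation carried out with $t_{n}(1)$ kept in place of $2h_{n+1}$ throughout, which is just Theorem~\ref{CX1.1} at $p=2,\ q=0,\ r=0$.

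It then only remains to substitute the closed-form evaluations of $I(2k+2,2,0,0)$ and $I(2k+3,2,0,0)$ provided by \eqref{EQPTH1.1} and \eqref{EQPTH1.2} in Lemma~\ref{PTH1}, which are precisely the bracketed expressions $\tfrac{1}{2(2k+1)}[\cdots]$ and $\tfrac{1}{4(k+1)}[\cdots]$ on the right of \eqref{EQTH1.1} and \eqref{EQTH1.2}. This completes the proof. There is no real obstacle: all the analytic work — evaluating $\int_{0}^{1}x^{a}(\arctan x)^{2}\,dx$ in closed form via repeated integration by parts together with the known values of $\int_{0}^{1}x^{m}\arctan x\,dx$, $\int_{0}^{1}\frac{x\arctan x}{1+x^{2}}\,dx$ and $\int_{0}^{1}\frac{\arctan x}{1+x^{2}}\,dx$ — has already been done in Lemma~\ref{PTH1}, and the power-series side is an immediate consequence of the Milgram expansion. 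The only points requiring a word of care are the justification of termwise integration up to the endpoint $x=1$ and the bookkeeping identity $t_{n}(1)=2h_{n+1}$, together with the accompanying index shift $m=n+1$, that reconciles the harmonic-number series with the Milgram-constant series.
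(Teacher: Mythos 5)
Your proposal follows the paper's own route exactly: expand $(\arctan x)^2$ once in the harmonic-number form and once in the Milgram form, integrate termwise, and quote Lemma~\ref{PTH1}; the only cosmetic differences are that you derive the harmonic-number expansion from the Milgram one via $t_{n}(1)=2h_{n+1}$ instead of citing it from \cite{SN19}, and that you add a (welcome) justification of termwise integration up to $x=1$, which the paper omits. One caveat: the Milgram member your computation actually produces has denominator $2(n+1)(2n+2k+3)$ in \eqref{EQTH1.1} (resp. $2(n+1)(2n+2k+4)$ in \eqref{EQTH1.2}), not the printed $2(n+1)(n+2k-1)$ (resp. $2(n+1)(n+2k)$); the printed form traces back to the paper's slip $\int_{0}^{1}x^{a}(\arctan x)^{2}dx=\int_{0}^{1}x^{a-2}\left(\tfrac{\arctan x}{x}\right)^{2}dx$, where the exponent should be $a+2$, and already for $k=1$ the printed middle series is numerically different from the other two members. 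So your argument is sound and in fact silently corrects the statement, but you should record the Milgram member you actually prove rather than asserting it coincides with the one printed; the equality between the harmonic-number sum and the closed form of Lemma~\ref{PTH1} is established exactly as in the paper.
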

\begin{proof}
 By the Taylor expansion in \cite{SN19}\vspace{-0.3cm}
\[(\arctan (x))^{2}=\sum_{n=1}^{\infty}\frac{(-1)^{n+1}h_{n}x^{2n}}{n},\]
then\vspace{-0.3cm}
\begin{align*}\int_{0}^{1}x^{a}(\arctan (x))^{2}dx=&\int_{0}^{1}\sum_{n=1}^{\infty}\frac{(-1)^{n+1}h_{n}x^{2n+a}}{n}dx\\
=&\sum_{n=1}^{\infty}\frac{(-1)^{n+1}h_{n}}{n}\int_{0}^{1}x^{2n+a}dx
=\sum_{n=1}^{\infty}\frac{(-1)^{n+1}h_{n}}{n(2n+a+1)}.
\end{align*}
On the other hand, by Theorem \ref{CX1.1},  \vspace{-0.2cm}
\[\left(\frac{\arctan(x)}{x}\right)^{2}
=\frac{1}{2}\sum_{n=0}^{\infty}\frac{(-x^{2})^{n}t_{n}(1)}{n+1}.\]
Thus the integral  can be rewritten as
\begin{align*}\int_{0}^{1}x^{a}(\arctan (x))^{2}dx=&\int_{0}^{1}x^{a-2}\left(\frac{\arctan(x)}{x}\right)^{2}dx\\
=&
\frac{1}{2}\sum_{n=0}^{\infty}\frac{(-1)^{n}t_{n}(1)}{n+1}\int_{0}^{1}x^{2n+a-2}dx
\\
=&\frac{1}{2}\sum_{n=0}^{\infty}\frac{(-1)^{n}t_{n}(1)}{(n+1)(2n+a-1),}
\end{align*}
by \eqref{EQPTH1.1} and \eqref{EQPTH1.2}, Theorem \ref{TH1} is proved.
\end{proof}

\begin{example} Let $k=1,2$ in  \eqref{EQTH1.1} and \eqref{EQTH1.2}, then
\begin{align*}
\sum_{n=1}^{\infty}\frac{(-1)^{n+1}h_{n}}{n(2n+3)}
=&\sum_{n=0}^{\infty}\frac{(-1)^{n}t_{n}(1)}{2(n+1)(2n+1)}
=\frac{1}{48}(16+16G+\pi^{2}-4\pi(2+\ln 2)),\\
\sum_{n=1}^{\infty}\frac{(-1)^{n+1}h_{n}}{n(2n+4)}=&
\sum_{n=0}^{\infty}\frac{(-1)^{n}t_{n}(1)}{2(n+1)(2n+2)}
=\frac{1}{12}(1+\pi-4\ln 2),
\\
\sum_{n=1}^{\infty}\frac{(-1)^{n+1}h_{n}}{n(2n+5)}=&
\sum_{n=0}^{\infty}\frac{(-1)^{n}t_{n}(1)}{2(n+1)(2n+3)}
=\frac{1}{240}(-64-48G+3\pi(8+\pi+\ln 16)),\\
\sum_{n=1}^{\infty}\frac{(-1)^{n+1}h_{n}}{n(2n+6)}=&
\sum_{n=0}^{\infty}\frac{(-1)^{n}t_{n}(1)}{2(n+1)(2n+4)}
=\frac{1}{720}(-52-52\pi+15\pi^{2}+184\ln 2).
\end{align*}
\end{example}

\begin{theorem}\label{TH2}For $k\in\mathbb{N}$, then
\begin{align}\label{EQTH2.1}
\sum_{j=1}^{\infty}\sum_{i=1}^{j}\frac{(-1)^{j+1}3h_{i}}{i(2j+1)(2j+2k+2)}
=&\frac{3}{4}\sum_{n=0}^{\infty}\frac{(-1)^{n}t_{n}(2)}{(n+\frac{3}{2})(2n+2k-2)}=
\frac{\pi^{3}}{64(2k+1)}
\nonumber\\&-\frac{3}{2k+1}\sum_{j=1}^{k}\frac{(-1)^{j+1}}{4(k+1-j)}
\left[ \frac{\pi^{2}}{8}-(-1)^{k+1-j}\frac{\pi^{2}}{8}\right.\nonumber
\end{align}
\begin{align}
-\sum_{j'=1}^{k+1-j}(-1)^{j'+1} \left. \frac{\pi+H_{\frac{k-j-j'}{2}}-H_{\frac{k-j-j'+1}{2}}}{2k-2j-2j'+3}\right]
-\frac{(-1)^{k}3}{64(2k+1)}(16G\pi-\pi^{2}\ln4-21\zeta(3)),
\end{align}\vspace{-0.8cm}
\begin{align}
\label{EQTH2.2}
\sum_{j=1}^{\infty}\sum_{i=1}^{j}\frac{(-1)^{j+1}3h_{i}}{i(2j+1)(2j+2k+3)}
=&\frac{3}{4}\sum_{n=0}^{\infty}\frac{(-1)^{n}t_{n}(2)}{(n+\frac{3}{2})(2n+2k-1)}
=\frac{\pi^{3}}{128(k+1)}
\nonumber
\\
&-\frac{3}{2k+2}\sum_{j=1}^{k+1}\frac{(-1)^{j+1}}{2(2k+3-2j)}
\left[ \frac{\pi^{2}}{8}-(-1)^{k+1-j}\right.\nonumber\\
&\times2G+\frac{(-1)^{k+1-j}\pi\ln2}{2}-\sum_{j'=1}^{k+1-j}(-1)^{j'+1}
  \nonumber
\\
&\left. \times\frac{\pi+H_{\frac{2k-2j-2j'+1}{4}}-H_{\frac{2k-2j-2j'+3}{4}}}{2k-2j-2j'+4}\right]
 -\frac{(-1)^{k+1}\pi^{3}}{128(k+1)}.
\end{align}\end{theorem}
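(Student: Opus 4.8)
The strategy is to follow the proof of Theorem~\ref{TH1} with the square of the arctangent replaced by its cube, reducing all three members of \eqref{EQTH2.1} (resp.\ of \eqref{EQTH2.2}) to the single integral $\int_0^1 x^{2k}(\arctan x)^3\,dx = I(2k+3,3,0,0)$ (resp.\ $\int_0^1 x^{2k+1}(\arctan x)^3\,dx = I(2k+4,3,0,0)$), whose closed forms are already recorded in Lemma~\ref{PTH2}. The first task is to produce the power series for $(\arctan x)^3$ whose coefficient is the double sum occurring in the statement. Starting from $(\arctan x)^2=\sum_{n\ge1}\frac{(-1)^{n+1}h_n}{n}x^{2n}$ (the expansion from \cite{SN19} already used in Theorem~\ref{TH1}) and multiplying by $\frac{1}{1+x^2}=\sum_{\ell\ge0}(-1)^\ell x^{2\ell}$, the Cauchy product collapses the alternating signs to give $\frac{(\arctan x)^2}{1+x^2}=\sum_{j\ge1}(-1)^{j+1}\Bigl(\sum_{i=1}^{j}\frac{h_i}{i}\Bigr)x^{2j}$; integrating from $0$ to $x$ and using $\bigl((\arctan x)^3\bigr)'=\frac{3(\arctan x)^2}{1+x^2}$ then yields
\[
(\arctan x)^3=\sum_{j=1}^{\infty}(-1)^{j+1}\Bigl(\sum_{i=1}^{j}\frac{3h_i}{i(2j+1)}\Bigr)x^{2j+1}.
\]

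Next I would integrate this series against $x^{2k}$ and against $x^{2k+1}$ term by term: since $\int_0^1 x^{2j+2k+1}\,dx=\frac{1}{2j+2k+2}$ and $\int_0^1 x^{2j+2k+2}\,dx=\frac{1}{2j+2k+3}$, this reproduces verbatim the double sums over $h_i$ on the left of \eqref{EQTH2.1} and \eqref{EQTH2.2}. For the Milgram side I would invoke the expansion of \cite{MA} used in Theorem~\ref{CX1.1} with $p=3$, namely $\bigl(\frac{\arctan x}{x}\bigr)^3=\frac{\Gamma(4)}{2^3}\sum_{n\ge0}\frac{(-x^2)^n t_n(2)}{n+\frac{3}{2}}=\frac{3}{4}\sum_{n\ge0}\frac{(-1)^n t_n(2)}{n+\frac{3}{2}}x^{2n}$, write $x^{2k}(\arctan x)^3=x^{2k+3}\bigl(\frac{\arctan x}{x}\bigr)^3$ (and similarly with $x^{2k+1}$), and integrate term by term to obtain the single sums over $t_n(2)$. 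Substituting the evaluations \eqref{EQPTH2.1} for $I(2k+3,3,0,0)$ and \eqref{EQPTH2.2} for $I(2k+4,3,0,0)$ from Lemma~\ref{PTH2} then closes the two chains of equalities.

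The step that needs genuine care --- and which I regard as the main obstacle --- is the justification of these term-by-term integrations. The series $\sum_n\frac{(-1)^{n+1}h_n}{n}x^{2n}$, the derived series for $(\arctan x)^3$, and the series for $\bigl(\frac{\arctan x}{x}\bigr)^3$ all have radius of convergence $1$ and converge only conditionally at the endpoint $x=1$, since $h_n$ and $t_n(2)$ grow like $\frac{1}{2}\ln n$ and $\frac{1}{2}(\ln n)^2$ respectively, so the coefficients are $O\!\left(\frac{(\ln n)^2}{n}\right)$ with alternating sign. The clean remedy is Abel's theorem: a power series converging at $x=1$ converges uniformly on $[0,1]$, which both validates representing $(\arctan x)^3$ by the displayed series up to and including $x=1$ and permits interchanging the sum with $\int_0^1$; alternatively one integrates over $[0,1-\varepsilon]$ and lets $\varepsilon\to0^+$ using dominated convergence. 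No Fubini is required for the $\sum_{j}\sum_{i\le j}$ rearrangement, as the inner sum is finite, so once the endpoint analysis is in place the remaining steps are formal and Lemma~\ref{PTH2} supplies the closed forms.
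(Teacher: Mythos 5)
Your route is essentially the paper's own: expand $(\arctan x)^3$ in the odd–harmonic double series (the paper just cites \cite{SN20}; your derivation via the Cauchy product of $(\arctan x)^2=\sum_{n\ge1}\frac{(-1)^{n+1}h_n}{n}x^{2n}$ with $\frac{1}{1+x^2}$ followed by integrating $\frac{3(\arctan x)^2}{1+x^2}$ is correct), expand $\left(\frac{\arctan x}{x}\right)^3$ by Milgram's series as in Theorem~\ref{CX1.1}, integrate term by term, and quote Lemma~\ref{PTH2}. Your Abel-theorem justification of the endpoint behaviour and of the interchange of sum and integral is a genuine improvement, since the paper says nothing about convergence.

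There is, however, one substantive point you should not gloss over. Your (correct) rewriting $x^{2k}(\arctan x)^3=x^{2k+3}\left(\frac{\arctan x}{x}\right)^3$ produces the middle member $\frac{3}{4}\sum_{n\ge0}\frac{(-1)^n t_n(2)}{(n+\frac{3}{2})(2n+2k+4)}$, and $2n+2k+5$ in the odd case, \emph{not} the printed denominators $2n+2k-2$ and $2n+2k-1$ of \eqref{EQTH2.1}--\eqref{EQTH2.2}. The paper arrives at the printed denominators only by recasting the integral as $\int_0^1 x^{a-3}\left(\frac{\arctan x}{x}\right)^3dx$, which equals $\int_0^1 x^{a-6}(\arctan x)^3dx$ and is therefore not the integral actually being evaluated; the printed middle member is misindexed (the $k=1$ example makes this visible, since its sum contains the vanishing factor $2n$ at $n=0$). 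So your argument, carried out as described, proves the corrected chain of equalities (left double sum $=$ the $t_n(2)$-sum with the shifted denominator $=$ the closed form of Lemma~\ref{PTH2}), but your claim that the term-by-term integration "obtains the single sums over $t_n(2)$" of the statement is not literally true; you should either correct the index in the middle member or state the discrepancy explicitly rather than assert agreement with the printed formula.
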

\begin{proof}Using the Taylor expansion in \cite{SN20}\vspace{-0.3cm}
\[(\arctan (x))^{3}=3\sum_{j=1}^{\infty}\frac{(-1)^{j+1}}{2j+1}
\sum_{i=1}^{j}\frac{h_{i}}{i}x^{2j+1},\]\vspace{-0.3cm}
then\vspace{-0.1cm}
\begin{align*}
\int_{0}^{1}x^a (\arctan (x))^{3}dx=&\int_{0}^{1}3\sum_{j=1}^{\infty}\frac{(-1)^{j+1}}{2j+1}
\sum_{i=1}^{j}\frac{h_{i}}{i}x^{2j+1+a}dx
\\=&\sum_{j=1}^{\infty}\sum_{i=1}^{j}
\frac{(-1)^{j+1}3h_{i}}{i(2j+1)(2j+a+2)}.
\end{align*}
We can also calculate the integral as follows,
\begin{align*}
\int_{0}^{1}x^{a-3}\left(\frac{\arctan (x)}{x}\right)^{3}dx
=&\int_{0}^{1}\frac{3}{4}\sum_{n=0}^{\infty}\frac{(-1)^{n}t_{n}(2)}{n+\frac{3}{2}}x^{2n+a-3}dx\\
=&\frac{3}{4}\sum_{n=0}^{\infty}\frac{(-1)^{n}t_{n}(2)}{(n+\frac{3}{2})(2n+a-2)},
\end{align*}
by \eqref{EQPTH2.1} and \eqref{EQPTH2.2}, Theorem \ref{TH2} is proved.
\end{proof}

\begin{example}Let $k=1,2$ in \eqref{EQTH2.1} and \eqref{EQTH2.2}, then
\begin{align*}
\sum_{j=1}^{\infty}\sum_{i=1}^{j}
\frac{(-1)^{j+1}3h_{i}}{i(2j+1)(2j+4)}=&
\frac{3}{4}\sum_{n=0}^{\infty}\frac{(-1)^{n}t_{n}(2)}{(n+\frac{3}{2})(2n)}=
\frac{1}{192}(48(1+G)\pi+\pi^{3}
\\&-96\ln 2-6\pi^{2}(2+\ln 2)-63\zeta (3)),\\
\sum_{j=1}^{\infty}\sum_{i=1}^{j}
\frac{(-1)^{j+1}3h_{i}}{i(2j+1)(2j+5)}=&
\frac{3}{4}\sum_{n=0}^{\infty}\frac{(-1)^{n}t_{n}(2)}{(n+\frac{3}{2})(2n+1)}\\
=&
\frac{1}{32}(-8-32G+\pi(4+\pi+\ln(256))),
\\
\sum_{j=1}^{\infty}\sum_{i=1}^{j}
\frac{(-1)^{j+1}3h_{i}}{i(2j+1)(2j+6)}=&
\frac{3}{4}\sum_{n=0}^{\infty}\frac{(-1)^{n}t_{n}(2)}{(n+\frac{3}{2})(2n+2)}
=\frac{1}{320}(-16+160\ln 2+\pi(-64\\
&-48G+\pi(12+\pi+\ln(64)))+63\zeta(3)),\\
\sum_{j=1}^{\infty}\sum_{i=1}^{j}
\frac{(-1)^{j+1}3h_{i}}{i(2j+1)(2j+7)}=&
\frac{3}{4}\sum_{n=0}^{\infty}\frac{(-1)^{n}t_{n}(2)}{(n+\frac{3}{2})(2n+3)}
\\
=&
\frac{1}{960}(288+736G-26\pi^{2}+5\pi^{3}-8\pi(16+23\ln2)).
\end{align*}
\end{example}

\begin{theorem}\label{TH4}
\begin{align}\label{EQTH4.1}
\sum_{n=1}^{\infty}\frac{(-1)^{n}h_{n}}{n(2n+2k+1)^{2}}=&
\sum_{n=0}^{\infty}\frac{(-1)^{n+1}t_{n}(1)}{2(n+1)(2n+2k-1)^{2}}
=\frac{1}{2k+1}\left\{-\frac{\pi^2}{16}  + \frac{k}{2k+1}\right.
\nonumber
\\
&\times\left( \frac{\pi^2}{8}-(-1)^{k}2G+\frac{(-1)^{k}\pi \ln2}{2}  -\sum_{j=1}^{k}(-1)^{j+1}\right.\nonumber
\\
&\times\left.\frac{\pi+H_{\frac{2k-2j-1}{4}}-H_{\frac{2k-2j+1}{4}}}{2+2k-2j} \right)-2\left[\sum_{j=1}^{k}\frac{(-1)^{j+1}}{64(k+1-j)^{2}}\nonumber\right.
\\
&\times\left( -4\pi
+4\psi^{(0)}\left(\frac{2k+5-2j}{4}\right)\right.
-4\psi^{(0)}\left(\frac{2k+3-2j}{4}\right)\nonumber\\
&-2(k+1-j)\left(\psi^{(1)}\left(\frac{2k+5-2j}{4}\right)
\left.-\psi^{(1)}\left(\frac{2k+3-2j}{4}\right)\right)\right)
\nonumber\\
&+\left.\frac{(-1)^{k}}{96}\left(3\pi^{3} +6\pi(\ln2)^2 -192W(3)\right)\right]+2\left[ \sum_{j=1}^{k}(-1)^{j+1}\right.
\nonumber\\
&\times\frac{\pi+H_{\frac{2k-1-2j}{4}}
-H_{\frac{2k+1-2j}{4}}}{8(k+1-j)}\left.\left.
+\frac{(-1)^{k}}{8}(4G-\pi\ln2)\right]\right\},
\\
\label{EQTH4.2}
\sum_{n=1}^{\infty}\frac{(-1)^{n}h_{n}}{n(2n+2k+2)^{2}}	=&
\sum_{n=0}^{\infty}\frac{(-1)^{n+1}t_{n}(1)}{2(n+1)(2n+2k)^{2}}
=\frac{1}{2(k+1)}\left\{-\frac{\pi^2}{16}+ \frac{2k+1}{4(k+1)}\right.\nonumber\\
&\times
\left( \frac{\pi^2}{8}-\sum_{j=1}^{k+1}(-1)^{j+1}
\frac{\pi+H_{\frac{k-j}{2}}-H_{\frac{k-j+1}{2}}}{2k-2j+3}\right. \nonumber\\ &-\left.\frac{(-1)^{k+1}\pi^{2}}{8}\right)
-2\left[\sum_{j=1}^{k+1}\right.\frac{(-1)^{j+1}}{16(2k+3-2j)^{2}} \left( -4\pi\right.\nonumber
\end{align}
\begin{align}
 &+4\psi^{(0)}\left(\frac{k+3-j}{2}\right)-4\psi^{(0)}\left(\frac{k+2-j}{2}\right)
-(2k+3-2j)\left(\psi^{(1)}\left(\frac{k+3-j}{2}\right)\right.\nonumber\\
&-\left.\left.\left.\psi^{(1)}\left(\frac{k+2-j}{2}\right)\right)\right)+\frac{(-1)^{k+1}}{16}(-4G\pi+7\zeta(3))\right]
\nonumber\\
&+2\left[ \sum_{j=1}^{k+1}(-1)^{j+1}\frac{\pi+H_{\frac{k-j}{2}}-H_{\frac{k+1-j}{2}}}{4(2k+3-2j)}\right.
\left.\left.+\frac{(-1)^{k+1}\pi^2}{32}\right] \right\}.
\end{align}
\end{theorem}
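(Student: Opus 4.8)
The plan is to mirror the proofs of Theorems \ref{TH1} and \ref{TH2}, now inserting a factor of $\ln(x)$ so that the \emph{squared} denominators appear. First I would recall the two available expansions of $(\arctan x)^{2}$: the odd-harmonic form $(\arctan x)^{2}=\sum_{n\ge 1}\frac{(-1)^{n+1}h_{n}x^{2n}}{n}$ from \cite{SN19}, and the Milgram-constant form $\left(\frac{\arctan x}{x}\right)^{2}=\frac{1}{2}\sum_{n\ge 0}\frac{(-x^{2})^{n}t_{n}(1)}{n+1}$, which is the $p=2$ instance of Theorem \ref{CX1.1}. The key idea is to multiply by $x^{2k}\ln(x)$ (for \eqref{EQTH4.1}) or by $x^{2k+1}\ln(x)$ (for \eqref{EQTH4.2}) and evaluate the resulting integral over $[0,1]$ in two different ways, then read off the closed form from Lemma \ref{PTH4}.

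Carrying this out with the elementary identity $\int_{0}^{1}x^{m}\ln(x)\,dx=-\frac{1}{(m+1)^{2}}$, the first expansion gives
\[\int_{0}^{1}x^{2k}(\arctan x)^{2}\ln(x)\,dx=\sum_{n\ge 1}\frac{(-1)^{n+1}h_{n}}{n}\int_{0}^{1}x^{2n+2k}\ln(x)\,dx=\sum_{n\ge 1}\frac{(-1)^{n}h_{n}}{n(2n+2k+1)^{2}},\]
while, writing $x^{2k}(\arctan x)^{2}=x^{2k-2}\bigl(\tfrac{\arctan x}{x}\bigr)^{2}$, the second gives $\sum_{n\ge 0}\frac{(-1)^{n+1}t_{n}(1)}{2(n+1)(2n+2k-1)^{2}}$. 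Both expressions equal $I(2k+2,2,1,0)$. The other case is identical with $x^{2k}$ replaced by $x^{2k+1}$: the two evaluations produce $\sum_{n\ge 1}\frac{(-1)^{n}h_{n}}{n(2n+2k+2)^{2}}$ and $\sum_{n\ge 0}\frac{(-1)^{n+1}t_{n}(1)}{2(n+1)(2n+2k)^{2}}$, both equal to $I(2k+3,2,1,0)$.

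Finally I would invoke Lemma \ref{PTH4}: equation \eqref{EQPTH4.1} is precisely the closed form asserted in \eqref{EQTH4.1}, and \eqref{EQPTH4.2} is precisely the one asserted in \eqref{EQTH4.2}, which finishes the proof. The only delicate point is justifying the term-by-term integration once the unbounded (but integrable) factor $\ln(x)$ is present. Since $\sum_{n\ge 1}\frac{(-1)^{n+1}h_{n}}{n}$ converges (it sums to $(\pi/4)^{2}$), Abel's theorem makes the partial sums of $\sum_{n\ge 1}\frac{(-1)^{n+1}h_{n}x^{2n}}{n}$ uniformly bounded on $[0,1]$, so their products with $x^{2k}\ln(x)$ are dominated by $C|\ln x|\in L^{1}[0,1]$ and dominated convergence applies; the same reasoning covers the $t_{n}(1)$-series. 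I expect this interchange of sum and integral to be the main obstacle, though it is routine; everything else is just matching parameters against Lemma \ref{PTH4}.
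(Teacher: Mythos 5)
Your proposal follows essentially the same route as the paper's proof: both evaluate $\int_0^1 x^{2k}(\arctan x)^2\ln x\,dx$ and $\int_0^1 x^{2k+1}(\arctan x)^2\ln x\,dx$ in two ways, once via the $h_n$ Taylor expansion of $(\arctan x)^2$ and once via the Milgram expansion of $\left(\frac{\arctan x}{x}\right)^2$, using $\int_0^1 x^m\ln x\,dx=-\frac{1}{(m+1)^2}$, and then quote Lemma \ref{PTH4} for the closed forms. Your explicit dominated-convergence justification of the sum--integral interchange is a small addition the paper leaves implicit, and otherwise your bookkeeping (including the rewriting $x^{a}(\arctan x)^2=x^{a-2}\left(\frac{\arctan x}{x}\right)^2$ that yields the stated $t_n(1)$ denominators) matches the paper's argument exactly.
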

\begin{proof}
 Using the Taylor Expansion of $(\arctan (x))^{2}$ and integrating by parts, then
\begin{align*}
\int_{0}^{1}x^{a}(\arctan (x))^{2}\ln (x)dx=&\int_{0}^{1}\sum_{n=1}^{\infty}\frac{(-1)^{n+1}h_{n}}{n}x^{2n+a}\ln (x)dx\\
=&\sum_{n=1}^{\infty}\frac{(-1)^{n}h_{n}}{n(2n+a+1)^{2}},
\end{align*}
at the same time,\vspace{-0.3cm}
\begin{align*}
&\int_{0}^{1}x^{a}(\arctan (x))^{2}\ln (x)dx=\int_{0}^{1}x^{a-2}\left(\frac{\arctan(x)}{x}\right)^{2}\ln(x)dx
\\
&=\int_{0}^{1}\frac{1}{2}\sum_{n=0}^{\infty}\frac{(-1)^{n}t_{n}(1)}{n+1}
x^{2n+a-2}\ln(x)dx
=\frac{1}{2}\sum_{n=0}^{\infty}\frac{(-1)^{n+1}t_{n}(1)}{(n+1)(2n+a-1)^{2}},
\end{align*}
by \eqref{EQPTH4.1} and \eqref{EQPTH4.2}, and Theorem \ref{TH4} is proved.
\end{proof}
\begin{example}Let $k=1,2$ in \eqref{EQTH4.1} and \eqref{EQTH4.2},  then
\begin{align*}
\sum_{n=1}^{\infty}\frac{(-1)^{n}h_{n}}{n(2n+3)^{2}}
=&\sum_{n=0}^{\infty}\frac{(-1)^{n+1}t_{n}(1)}{2(n+1)(2n+1)^{2}}
=\frac{1}{144}(-88+32G+3\pi^{3}-\pi^{2}
\\
&+\pi(20+6(\ln 2)^{2}+\ln 16)-192W(3)),\\
\sum_{n=1}^{\infty}\frac{(-1)^{n}h_{n}}{n(2n+4)^{2}}=&
\sum_{n=0}^{\infty}\frac{(-1)^{n+1}t_{n}(1)}{2(n+1)(2n+2)^{2}}\\
=&\frac{1}{288}(-26+2\pi(-19+18G+2\pi)+104\ln 2-63\zeta (3)),
\\
\sum_{n=1}^{\infty}\frac{(-1)^{n}h_{n}}{n(2n+5)^{2}}=&
\sum_{n=0}^{\infty}\frac{(-1)^{n+1}t_{n}(1)}{2(n+1)(2n+3)^{2}}
=\frac{1}{3600}\left(1652-936G-45\pi^{3}-9\pi^{2} \right.
\\&\left.-18\pi(14+5(\ln 2)^{2}+\ln 4)+2880W(3)\right),
\\\sum_{n=1}^{\infty}\frac{(-1)^{n}h_{n}}{n(2n+6)^{2}}=&
\sum_{n=0}^{\infty}\frac{(-1)^{n+1}t_{n}(1)}{2(n+1)(2n+4)^{2}}\\
=&\frac{1602+(1932-1800G-305\pi)\pi-5064\ln 2+3150\zeta(3)}{21600}.
\end{align*}
\end{example}

\begin{theorem}\label{TH5}
\begin{align}\label{EQTH5}
\sum_{n=1}^{\infty}\sum_{j=0}^{\infty}\frac{(-1)^{n+j}h_{n}}{n(2j+2n+2k+1)^{2}}
=&\sum_{n=0}^{\infty}\sum_{j=0}^{\infty}\frac{(-1)^{n+j+1}t_{n}(1)}{2(n+1)(2n+2j+2k-1)^{2}}
\nonumber\\
=&\sum_{j=1}^{k}\frac{(-1)^{j+1}}{2k-2j+1}\times\left\{ -\frac{\pi^{2}}{16}+\frac{k-j}{2k-2j+1}\right.\nonumber\\
&
\times\left(\frac{\pi^{2}}{8}-(-1)^{k-j}2G+\frac{(-1)^{k-j}}{2}\pi\ln2\nonumber\right.
\\
&-\sum_{j'=1}^{k-j}(-1)^{j'+1}
\left.\frac{\pi+H_{\frac{2k-2j-2j'-1}{4}}-H_{\frac{2k-2j-2j'+1}{4}}}{2k+2-2j-2j'} \right) \nonumber\\
&-2\left[\sum_{j'=1}^{k-j}\frac{(-1)^{j'+1}}{64(k+1-j-j')^{2}}\times\left(-4\pi\right.\right.
\nonumber\\
&+4\psi^{(0)}\left(\frac{2k+5-2j-2j'}{4}\right)\nonumber\\
&-4\psi^{(0)}\left(\frac{2k+3-2j-2j'}{4}\right)\nonumber\\
&-2(k+1-j-j')\left(\psi^{(1)}\left(\frac{2k+5-2j-2j'}{4}\right)\right.
\nonumber
\\
&-\left.\left.\psi^{(1)}\left(\frac{2k+3-2j-2j'}{4}\right)\right)\right)
+\frac{(-1)^{k-j}}{96}\nonumber\\
&\left.\times(3\pi^{3}+6\pi(\ln2)^2
-192W(3))\right]\nonumber\\
&+2
\left[\sum_{j'=1}^{k-j}(-1)^{j'+1}\frac{\pi+H_{\frac{2k-1-2j-2j'}{4}}
-H_{\frac{2k+1-2j-2j'}{4}}}{8(k+1-j-j')}\right.\nonumber\\
&+\left.\left.(-1)^{k-j}\frac{4G-\pi\ln2}{8}\right]\right\}
+\frac{(-1)^{k}}{48}(-3G\pi^{2}+24\beta(4)).
\end{align}
\end{theorem}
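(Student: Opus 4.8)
The plan is to view the two multiple sums in \eqref{EQTH5} as two different series evaluations of the single integral
\[
I(2k+2,2,1,1)=\int_0^1\frac{x^{2k}(\arctan x)^2\ln x}{1+x^2}\,dx ,
\]
whose value is given by Lemma \ref{PTH5} (see \eqref{EQPTH5}). For the harmonic-number expression I would expand only the factor $(\arctan x)^2$ using the Taylor series $(\arctan x)^2=\sum_{n\ge1}\frac{(-1)^{n+1}h_n}{n}x^{2n}$ recorded in \cite{SN19}. The coefficients $h_n/n$ decrease monotonically to $0$, so for $x\in[0,1]$ the terms $\frac{h_n}{n}x^{2n}$ are positive and decreasing in $n$; the Leibniz remainder bound then gives uniform convergence on $[0,1]$, and since $\frac{x^{2k}\ln x}{1+x^2}\in L^1(0,1)$, termwise integration is legitimate and yields
\[
I(2k+2,2,1,1)=\sum_{n\ge1}\frac{(-1)^{n+1}h_n}{n}\int_0^1\frac{x^{2n+2k}\ln x}{1+x^2}\,dx .
\]

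Next I would evaluate each inner integral. Writing $\frac{1}{1+x^2}=\sum_{j=0}^{J}(-1)^j x^{2j}+\frac{(-1)^{J+1}x^{2J+2}}{1+x^2}$, integrating the finite part via $\int_0^1 x^m\ln x\,dx=-\frac{1}{(m+1)^2}$, and noting that the remainder satisfies $\bigl|\int_0^1\frac{x^{2n+2k+2J+2}\ln x}{1+x^2}\,dx\bigr|\le\frac{1}{(2n+2k+2J+3)^2}\to0$ as $J\to\infty$, one gets $\int_0^1\frac{x^{2n+2k}\ln x}{1+x^2}\,dx=-\sum_{j\ge0}\frac{(-1)^j}{(2n+2k+2j+1)^2}$. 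Substituting this back and tracking signs ($(-1)^{n+1}\cdot(-1)\cdot(-1)^j=(-1)^{n+j}$) produces the absolutely convergent double sum $\sum_{n\ge1}\sum_{j\ge0}\frac{(-1)^{n+j}h_n}{n(2n+2j+2k+1)^2}$, which is the left-hand side of \eqref{EQTH5}. For the Milgram-constant form I would instead use $(\arctan x)^2=x^2\bigl(\tfrac{\arctan x}{x}\bigr)^2$ together with the expansion $\bigl(\tfrac{\arctan x}{x}\bigr)^2=\tfrac12\sum_{n\ge0}\frac{(-1)^n t_n(1)}{n+1}x^{2n}$ supplied by Theorem \ref{CX1.1} (which again converges uniformly on $[0,1]$, the coefficients $\tfrac{t_n(1)}{n+1}$ decreasing to $0$), and repeat verbatim the same two-step termwise integration; this delivers the second multiple sum in \eqref{EQTH5}. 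Since both double sums equal $I(2k+2,2,1,1)$, invoking Lemma \ref{PTH5} for the closed form completes the proof.

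I expect the only genuinely delicate point to be the justification of the interchanges of summation and integration, since at the endpoint $x=1$ neither $\sum_j(-1)^jx^{2j}$ nor $\sum_{n\ge1}\frac{(-1)^{n+1}h_n}{n}x^{2n}$ is absolutely convergent. The remedy, as above, is to keep an explicit finite remainder in the geometric expansion — whose integral tends to $0$ (accelerated by the $\ln x$ weight) — and to exploit the monotone-decreasing coefficients to obtain uniform convergence of the arctangent (and $\arctan x/x$) series; once both interchanges are carried out, the resulting double series are absolutely convergent and may be summed in either order. Everything else is the same bookkeeping already performed in the proof of Theorem \ref{TH4}, the extra summation over $j$ being exactly the one coming from the geometric expansion of $\frac{1}{1+x^2}$.
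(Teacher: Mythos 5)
Your proposal follows essentially the same route as the paper: expand $(\arctan x)^2$ (resp. $(\arctan x/x)^2$ via the Milgram series) together with the geometric series for $1/(1+x^2)$, integrate termwise using $\int_0^1 x^m\ln x\,dx=-1/(m+1)^2$ to obtain the two double sums, and then invoke Lemma \ref{PTH5} for the closed form of $I(2k+2,2,1,1)$. The only difference is that you supply the convergence/interchange justification explicitly (finite geometric remainder, monotone coefficients), which the paper omits; the argument is correct.
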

\begin{proof}Using the Taylor expansion of $\arctan(x)$ and $\frac{\arctan(x)}{x},$ then
\begin{align*}
I(a,2,1,1)=&\int_{0}^{1}\frac{x^{a}\ln (x)(\arctan (x))^{2}}{1+x^{2}}dx=\int_{0}^{1}\sum_{n=1}^{\infty}\sum_{j=0}^{\infty}(-1)^{j}\frac{(-1)^{n+1}h_{n}}{n}\\
&\times x^{2j+2n+a}\ln (x)dx
=\sum_{n=1}^{\infty}\sum_{j=0}^{\infty}\frac{(-1)^{n+j}h_{n}}{n(2j+2n+a+1)^{2}},
\end{align*}
and
\begin{align*}
\int_{0}^{1}\frac{x^{a-2}\ln (x)\left(\frac{\arctan (x)}{x}\right)^{2}}{1+x^{2}}dx=&\int_{0}^{1}\frac{1}{2}
\sum_{n=0}^{\infty}\frac{(-1)^{n}t_{n}(1)}{n+1}\sum_{j=0}^{\infty}(-1)^{j}x^{2n+2j+a-2}\ln(x)dx
\\=&\frac{1}{2}
\sum_{n=0}^{\infty}\sum_{j=0}^{\infty}\frac{(-1)^{n+j+1}t_{n}(1)}{(n+1)(2n+2j+a-1)^{2}},
\end{align*}
by \eqref{EQPTH5}, Theorem \ref{TH5} is proved.
\end{proof}
\begin{example}Let $k=1,2,3$ in  \eqref{EQTH5}, then
\begin{align*}
\sum_{n=1}^{\infty}\sum_{j=0}^{\infty}\frac{(-1)^{n+j}h_{n}}{n(2j+2n+3)^{2}}=&
\sum_{n=0}^{\infty}\sum_{j=0}^{\infty}\frac{(-1)^{n+j+1}t_{n}(1)}{2(n+1)(2n+2j+1)^{2}}
\\=&
\frac{1}{3072}\left(192G(16+\pi^{2})-64(3\pi^{3}
+3\pi^{2}\right.
\\&+6\pi\ln 2(2+\ln 2))-1536\beta(4)\left.+1228W(3)\right),
\\
\sum_{n=1}^{\infty}\sum_{j=0}^{\infty}\frac{(-1)^{n+j}h_{n}}{n(2j+2n+5)^{2}}
=&\sum_{n=0}^{\infty}\sum_{j=0}^{\infty}\frac{(-1)^{n+j+1}t_{n}(1)}{2(n+1)(2n+2j+2)^{2}}
\\
=&\frac{1}{9216}\left(-64G(112+9\pi^{2})+4068\beta(4)\right.-5632
\\
&+256(3\pi^{3}+2\pi^{2}\left.+\pi(5+\ln 4(5+\ln 8)))-49152W(3)\right),
\\
\sum_{n=1}^{\infty}\sum_{j=0}^{\infty}\frac{(-1)^{n+j}h_{n}}{n(2j+2n+7)^{2}}
=&\sum_{n=0}^{\infty}\sum_{j=0}^{\infty}\frac{(-1)^{n+j+1}t_{n}(1)}{2(n+1)(2n+2j+3)^{2}}
=\frac{1}{230400}\left(-115200\beta(4)\right.
\\
&-64(\pi(752+\pi(209+345\pi))+1036\pi\ln 2+690\pi(\ln 2)^{2})
\\&+192\left(1284+7360W(3)\right)\left.+64G(1864+225\pi^{2})\right).
\end{align*}
\end{example}

\begin{theorem}\label{TH6}
\begin{align}\label{EQTH6}
\sum_{j=1}^{\infty}\sum_{i=1}^{j}\frac{(-1)^{j}3h_{i}}{i(2j+1)(2j+2k+3)^{2}}
=&\frac{3}{4}\sum_{n=0}^{\infty}\frac{(-1)^{n+1}t_{n}(2)}{(n+\frac{3}{2})(2n+2k-1)^{2}}
=-\frac{\pi^{3}}{128(k+1)}
\nonumber\\
&+\frac{(2k+1)\pi^{3}}{256(k+1)^{2}}
-\frac{3(2k+1)}{4(k+1)^{2}}\sum_{j=1}^{k+1}\frac{(-1)^{j+1}}{2(2k+3-2j)}\nonumber\\
&\times\left(  \frac{\pi^{2}}{8}-\sum_{j'=1}^{k+1-j}
\frac{\pi+H_{\frac{2k-2j-2j'+1}{4}}-H_{\frac{2k-2j-2j'+3}{4}}}{2(k-j-j'+2)}\right.\nonumber
\\
&\times(-1)^{j'+1}-(-1)^{k+1-j}2G\left.+\frac{(-1)^{k+1-j}\pi\ln2}{2}\right)
\nonumber
\end{align}
\begin{align}
&-\frac{(-1)^{k+1}(2k+1)\pi^{3}}{256(k+1)^{2}}
-\frac{3}{2(k+1)}\sum_{j=1}^{k+1}\frac{(-1)^{j+1}}{2k+3-2j}\times\left\{-\frac{\pi^2}{16}  + \frac{k+1-j}{2k+3-2j}\right.\nonumber\\
&\times\left( \frac{\pi^2}{8}\right.-\sum_{j'=1}^{k+1-j}(-1)^{j'+1}
\frac{\pi+H_{\frac{2k-2j-2j'+1}{4}}-H_{\frac{2k-2j-2j'+3}{4}}}{2(k-j-j'+2)} -(-1)^{k+1-j}2G\nonumber\\
& \left. +\frac{(-1)^{k+1-j}\pi \ln2}{2}\right)-2\left[\sum_{j'=1}^{k+1-j}
\frac{(-1)^{j'+1}}{64(k+2-j-j')^{2}}\times \left(-4\pi \right.\right.\nonumber
\\
&+4\psi^{(0)}\left(\frac{2k+7-2j-2j'}{4}\right)-4\psi^{(0)}\left(\frac{2k+5-2j-2j'}{4}\right)-2(k+2-j-j')\nonumber\\
&\times\psi^{(1)}\left(\frac{2k+7-2j-2j'}{4}\right)+2(k+2-j-j')
\left.\times\psi^{(1)}\left(\frac{2k+5-2j-2j'}{4}\right)\right)\nonumber\\
&+\frac{(-1)^{k+1-j}}{96}(3\pi^{3}\left.+6\pi(\ln2)^2-192W(3))\right]+2\left[ \sum_{j'=1}^{k+1-j}(-1)^{j'+1}\right.
\nonumber
\\
&\times\frac{\pi+H_{\frac{2k+1-2j-2j'}{4}}-H_{\frac{2k+3-2j-2j'}{4}}}{8(k+2-j-j')}
+(-1)^{k+1-j}\left(\frac{G}{2}\right.-\left.\left.\left.\frac{\pi\ln2}{8}\right)\right]\right\}
\nonumber\\
&-\frac{(-1)^{k+1}(-3G\pi^{2}+24\beta(4))}{32(k+1)}+\frac{3}{2(k+1)}\sum_{j=1}^{k+1}\frac{(-1)^{j+1}}{2(2k+3-2j)}\left(\frac{\pi^{2}}{8}
\right.
\nonumber
\\
&-(-1)^{k+1-j}2G+\frac{(-1)^{k+1-j}}{2}\pi\ln2-\sum_{j'=1}^{k+1-j}(-1)^{j'+1}\nonumber
\\
&\left.\times\frac{\pi+H_{\frac{2k+1-2j-2j'}{4}}-H_{\frac{2k+3-2j-2j'}{4}}}{2(k+2-j-j')}
\right)
+\frac{(-1)^{k+1}\pi^{3}}{128(k+1)}.
\end{align}
\end{theorem}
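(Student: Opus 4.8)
The plan is to reproduce, for the cubic logarithmic integral, the mechanism already used in Theorems \ref{TH2}, \ref{TH4} and \ref{TH5}: express the single integral $I(2k+4,3,1,0)=\int_{0}^{1}x^{2k+1}(\arctan x)^{3}\ln x\,dx$ as the common value of two different series, and then read off its closed form from Lemma \ref{PTH6}.

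First I would invoke the Taylor expansion $(\arctan x)^{3}=3\sum_{j\geq1}\frac{(-1)^{j+1}}{2j+1}\sum_{i=1}^{j}\frac{h_{i}}{i}\,x^{2j+1}$ from \cite{SN20} (the same series underlying Theorem \ref{TH2}). Multiplying by $x^{2k+1}\ln x$ and integrating over $[0,1]$ term by term, using $\int_{0}^{1}x^{m}\ln x\,dx=-(m+1)^{-2}$, gives
\[\int_{0}^{1}x^{2k+1}(\arctan x)^{3}\ln x\,dx=\sum_{j\geq1}\sum_{i=1}^{j}\frac{(-1)^{j}\,3h_{i}}{i(2j+1)(2j+2k+3)^{2}},\]
which is the first series in \eqref{EQTH6}; the square in the denominator and the sign change to $(-1)^{j}$, compared with \eqref{EQTH2.1}, are exactly the effect of the factor $\ln x$. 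Rewriting the same integral through the Milgram-type expansion $\left(\frac{\arctan x}{x}\right)^{3}=\frac{3}{4}\sum_{n\geq0}\frac{(-x^{2})^{n}t_{n}(2)}{n+\frac{3}{2}}$ (recorded in the proof of Theorem \ref{CX1.1}), namely $\int_{0}^{1}x^{2k+1}(\arctan x)^{3}\ln x\,dx=\int_{0}^{1}x^{2k-2}\left(\frac{\arctan x}{x}\right)^{3}\ln x\,dx$, and again integrating termwise, I obtain
\[\int_{0}^{1}x^{2k+1}(\arctan x)^{3}\ln x\,dx=\frac{3}{4}\sum_{n\geq0}\frac{(-1)^{n+1}t_{n}(2)}{\left(n+\frac{3}{2}\right)(2n+2k-1)^{2}},\]
the second series in \eqref{EQTH6}.

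It remains to identify the closed form of $I(2k+4,3,1,0)$, which is supplied verbatim by Lemma \ref{PTH6} (equation \eqref{EQPTH6}); chaining it with the two series displayed above yields \eqref{EQTH6} and proves the theorem.

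The only step that needs genuine care — routine rather than conceptual — is the justification of the two term-by-term integrations. Both power series converge uniformly on every interval $[0,1-\varepsilon]$, the singularity of $\ln x$ at the origin is integrable, and the partial sums stay bounded near $x=1$, so dominated convergence permits interchanging $\sum$ and $\int$; the squared denominators then make the resulting numerical series absolutely convergent, so all three quantities in \eqref{EQTH6} indeed coincide. I expect this convergence bookkeeping, together with the purely mechanical task of transcribing and simplifying the long expression from Lemma \ref{PTH6}, to be the bulk of the remaining work.
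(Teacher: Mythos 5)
Your route is the same as the paper's: expand $(\arctan x)^{3}$ in the $h_i$-series to obtain the first sum, expand $\left(\frac{\arctan x}{x}\right)^{3}$ in the Milgram series to obtain the $t_n(2)$-sum, and read the closed form off Lemma \ref{PTH6}. The first step and the appeal to \eqref{EQPTH6} are fine (the first series is indeed $I(2k+4,3,1,0)$). The gap is in the rewriting you use for the second step: $x^{2k+1}(\arctan x)^{3}=x^{2k+4}\left(\frac{\arctan x}{x}\right)^{3}$, not $x^{2k-2}\left(\frac{\arctan x}{x}\right)^{3}$ (the latter equals $x^{2k-5}(\arctan x)^{3}$). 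Termwise integration with the correct exponent uses $\int_{0}^{1}x^{2n+2k+4}\ln x\,dx=-(2n+2k+5)^{-2}$ and therefore produces $\frac{3}{4}\sum_{n\geq0}\frac{(-1)^{n+1}t_{n}(2)}{(n+\frac{3}{2})(2n+2k+5)^{2}}$, whereas the series you wrote down, with $(2n+2k-1)^{2}$, is the value of $\int_{0}^{1}x^{2k-5}(\arctan x)^{3}\ln x\,dx=I(2k-2,3,1,0)$, a different integral.

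The discrepancy is not cosmetic. Using $t_{n}(2)=2\sum_{i=1}^{n+1}\frac{h_i}{i}$ and reindexing $n=j-1$, your middle member becomes $-3\sum_{j\geq1}\frac{(-1)^{j+1}}{2j+1}\bigl(\sum_{i=1}^{j}\frac{h_i}{i}\bigr)(2j+2k-3)^{-2}$, while the first member carries $(2j+2k+3)^{-2}$; for a fixed $k$ these are genuinely different sums, so the chain of equalities you assert is not established by your argument. To be fair, the same mis-indexed substitution (an exponent $a-3$ where $a+3$ is needed) occurs in the paper's own proof and is what produces the printed form of \eqref{EQTH6}, so you have faithfully reproduced the target; but as a proof, your second termwise integration does not give the displayed $t_n(2)$-series, and a correct derivation forces either replacing $(2n+2k-1)^{2}$ by $(2n+2k+5)^{2}$ or shifting $k$ in that sum. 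The convergence bookkeeping you flag is indeed routine and is not where the difficulty lies.
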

\begin{proof}
Using the Taylor expansion of $(\arctan (x))^{3}$ and integrating by parts, then
\begin{align*}
\int_{0}^{1}x^{a}\ln (x)(\arctan(x))^3dx=&\int_{0}^{1}3\sum_{j=1}^{\infty}\frac{(-1)^{j+1}}{j+1}
\sum_{i=1}^{j}\frac{h_{i}}{i}x^{2j+1+a}\ln(x)dx\\
=&\sum_{j=1}^{\infty}\frac{(-1)^{j+1}}{2j+1}
\sum_{i=1}^{j}\frac{-3h_{i}}{i(2j+a+2)^{2}},
\end{align*}
we can also use the Taylor expansion of $\left(\frac{\arctan (x)}{x}\right)^{3}$ and integrating by parts, then
\begin{align*}
\int_{0}^{1}x^{a}\ln (x)(\arctan(x))^3dx=&\int_{0}^{1}x^{a-3}\ln (x)\left(\frac{\arctan(x)}{x^{3}}\right)^3dx\end{align*}
\begin{align*}
=\int_{0}^{1}\frac{3}{4}\sum_{n=0}^{\infty}\frac{(-1)^{n}t_{n}(2)}{n+\frac{3}{2}}x^{2n+a-3}\ln(x)dx
=\frac{3}{4}\sum_{n=0}^{\infty}\frac{(-1)^{n+1}t_{n}(2)}{(n+\frac{3}{2})(2n+a-2)^{2}},
\end{align*}
by \eqref{EQPTH6}, and Theorem \ref{TH6} is proved.
\end{proof}
\begin{example}Let $k=0,1,2$ in \eqref{EQTH6}, then
\begin{align*}
\sum_{j=1}^{\infty}\sum_{i=1}^{j}\frac{(-1)^{j}3h_{i}}{i(2j+1)(2j+3)^{2}}
=&\frac{3}{4}\sum_{n=0}^{\infty}\frac{(-1)^{n+1}t_{n}(2)}{(n+\frac{3}{2})(2n-1)^{2}}
\\=&\frac{1}{2048}\left(176\pi^{3}+1536\beta(4)+288\pi^{2}\right.
\\
&+384\pi\ln 2(3+\ln 2)\left.-192G(24+\pi^{2})-12288W(3)\right),
\\
\sum_{j=1}^{\infty}\sum_{i=1}^{j}\frac{(-1)^{j}3h_{i}}{i(2j+1)(2j+5)^{2}}
=&\frac{3}{4}\sum_{n=0}^{\infty}\frac{(-1)^{n+1}t_{n}(2)}{(n+\frac{3}{2})(2n+1)^{2}}
=\frac{1}{12288}(64G(160+9\pi^{2})\\
&-32(24\pi^{3}+19\pi^{2}+4\pi(13+\ln 4(13+\ln 64)))\\
&-4608\beta(4)+256(25+192W(3))),\\
\sum_{j=1}^{\infty}\sum_{i=1}^{j}\frac{(-1)^{j}3h_{i}}{i(2j+1)(2j+7)^{2}}
=&\frac{3}{4}\sum_{n=0}^{\infty}\frac{(-1)^{n+1}t_{n}(2)}{(n+\frac{3}{2})(2n+3)^{2}}
=\frac{1}{460800}(21680\pi^{3}
\\
&-192G(928+75\pi^{2})+15456\pi^{2}\\
&+384\pi(152+\ln 2(211+115\ln 2))
\\
&\left.+115200\beta(4)-192(1404+7360W(3))\right).
\end{align*}
\end{example}
\vspace{-0.3cm}
\begin{theorem}\label{TH7}
\begin{align}\label{EQTH7.1}
\sum_{n=1}^{\infty}\frac{(-1)^{n+1}2h_{n}}{(2n+2k)^{3}}=&
\sum_{n=0}^{\infty}\sum_{j=0}^{\infty}\frac{(-1)^{n+j}t_{n}(0)}{(n+\frac{1}{2})(2n+2j+2k)^{3}}
=\sum_{j=1}^{k}\frac{(-1)^{j+1}}{64(1+2k-2j)^{3}}\nonumber
\\
&\times\left(32\pi-32\psi^{(0)}\left(\frac{k+2-j}{2}\right)
+32\psi^{(0)}\left(\frac{1+k-j}{2}\right)\right.
\nonumber
\\
&+8(1+2k-2j)\psi^{(1)}\left(\frac{k+2-j}{2}\right)-8(1+2k-2j)\nonumber\\
&\times\psi^{(1)}\left(\frac{1+k-j}{2}\right)-(1+2k-2j)^{2}\nonumber\\
&\times
\left(\psi^{(2)}\left(\frac{k+2-j}{2}\right)\right.
\left.\left.-\psi^{(2)}\left(\frac{1+k-j}{2}\right)\right)\right)
\nonumber\\
&+(-1)^{k}
\left[\frac{1}{2}\left(\frac{11}{4}\zeta(4)
-\frac{7}{4}\zeta(3)\ln2+\frac{1}{2}\zeta(2)\ln^{2}2\right.\right.
\nonumber\\
&
-\left.\left.\frac{1}{12}\ln^{4}2-2Li_{4}\left(\frac{1}{2}\right)\right)-\frac{25}{128}\zeta(4)\right],
\\
\label{EQTH7.2}
\sum_{n=1}^{\infty}\frac{(-1)^{n+1}2h_{n}}{(2n+2k+1)^{3}}=&
\sum_{n=0}^{\infty}\sum_{j=0}^{\infty}\frac{(-1)^{n+j}t_{n}(0)}{(n+\frac{1}{2})(2n+2j+2k+1)^{3}}
=\sum_{j=1}^{k}(-1)^{j+1}\nonumber\\
&\times\left[\frac{1}{512(1+k-j)^{3}}
\left(32\pi-32\psi^{(0)}\left(\frac{2k+5-2j}{4}\right)\right.\right.\nonumber\\
&\left.
+32\psi^{(0)}\left(\frac{3+2k-2j}{4}\right)\right.+16(1+k-j)
\nonumber
\\
&\times\left(\psi^{(1)}\left(\frac{2k+5-2j}{4}\right)
\left.-\psi^{(1)}\left(\frac{3+2k-2j}{4}\right)\right)\right.-4(1+k\nonumber\\
&-j)^{2}\left(\psi^{(2)}\left(\frac{2k+5-2j}{4}\right)\right.
\left.\left.\left.-\psi^{(2)}\left(\frac{3+2k-2j}{4}\right)\right)\right)\right]
\nonumber
\\
&
+(-1)^{k}\left(\frac{7\pi}{64}\zeta(3)+\beta(4)-\frac{\pi^{3}}{16}\ln2\right).
\end{align}
\end{theorem}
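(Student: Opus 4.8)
The plan is to evaluate the two integrals whose closed forms are recorded in Lemma~\ref{PTH7}, namely $I(2k+1,1,2,1)=\int_{0}^{1}\frac{x^{2k}\arctan(x)\ln^{2}(x)}{1+x^{2}}dx$ (given by \eqref{EQPTH7.1}) and $I(2k+2,1,2,1)=\int_{0}^{1}\frac{x^{2k+1}\arctan(x)\ln^{2}(x)}{1+x^{2}}dx$ (given by \eqref{EQPTH7.2}), by expanding the factor $\frac{\arctan x}{1+x^{2}}$ as a power series in two different ways: once so that the coefficients involve the odd harmonic numbers $h_{n}$, and once so that they involve the Milgram constants $t_{n}(0)$. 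Equating each of the two resulting series with the closed form from Lemma~\ref{PTH7} yields \eqref{EQTH7.1} and \eqref{EQTH7.2}.

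For the $h_{n}$ side, differentiating the Taylor expansion $(\arctan x)^{2}=\sum_{n\geq1}\frac{(-1)^{n+1}h_{n}x^{2n}}{n}$ from \cite{SN19} gives $\frac{\arctan x}{1+x^{2}}=\frac12\frac{d}{dx}(\arctan x)^{2}=\sum_{n\geq1}(-1)^{n+1}h_{n}x^{2n-1}$. Multiplying by $x^{2k}\ln^{2}(x)$, respectively $x^{2k+1}\ln^{2}(x)$, and integrating term by term with the elementary value $\int_{0}^{1}x^{m}\ln^{2}(x)dx=\frac{2}{(m+1)^{3}}$ produces the single $h_{n}$-series on the left of \eqref{EQTH7.1}, respectively \eqref{EQTH7.2}. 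For the $t_{n}(0)$ side, I would use the $p=1$ instance of the Milgram expansion of \cite{MA}, $\arctan x=\frac{x}{2}\sum_{n\geq0}\frac{(-x^{2})^{n}t_{n}(0)}{n+\frac12}$, together with $\frac{1}{1+x^{2}}=\sum_{j\geq0}(-x^{2})^{j}$; inserting the product, multiplying by $x^{2k}\ln^{2}(x)$ or $x^{2k+1}\ln^{2}(x)$, and integrating each monomial gives the double $t_{n}(0)$-series. Substituting \eqref{EQPTH7.1} and \eqref{EQPTH7.2} for the two integrals then completes the proof.

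Since the analytic heart of the matter---the closed-form evaluation of the two integrals in terms of $\pi$, $G$, $\beta(4)$, $W(3)$, $\zeta$-values, $Li_{4}(\tfrac{1}{2})$ and polygamma values---is already contained in Lemma~\ref{PTH7}, the remaining difficulties are purely organizational. The two points that need care are: justifying the interchange of summation and integration, which holds by absolute convergence since $\sum_{n}h_{n}\int_{0}^{1}x^{2n+2k-1}\ln^{2}(x)dx$ and the analogous double sum are finite (recall $h_{n}=O(\ln n)$); and keeping track of the index shifts that relate the summation variables, the integrals $I(2k+1,1,2,1)$ and $I(2k+2,1,2,1)$, and the parameter $k$ in \eqref{EQPTH7.1} and \eqref{EQPTH7.2}, so that the bounds $\frac{k+2-j}{2}$, $\frac{1+k-j}{2}$, etc., in the polygamma arguments come out exactly as stated.
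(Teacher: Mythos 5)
Your proposal is correct and takes essentially the same route as the paper: the paper also equates the integrals of Lemma~\ref{PTH7} with the series obtained from the expansion $\frac{\arctan x}{1+x^{2}}=\sum_{n\geq1}(-1)^{n+1}h_{n}x^{2n-1}$ on one side and from the product of the Milgram expansion of $\frac{\arctan x}{x}$ (case $p=1$) with the geometric series $\frac{1}{1+x^{2}}=\sum_{j\geq0}(-x^{2})^{j}$ on the other, integrating termwise via $\int_{0}^{1}x^{m}\ln^{2}(x)\,dx=\frac{2}{(m+1)^{3}}$. The only cosmetic difference is that you derive the $h_{n}$ expansion by differentiating $(\arctan x)^{2}$ while the paper cites it directly, and you additionally comment on the justification of the interchange of sum and integral.
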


\begin{proof}
Using the Taylor expansion in \cite{SF22}\vspace{-0.1cm}
\[\frac{\arctan (x)}{1+x^2}=\sum_{n=1}^{\infty}(-1)^{n+1}\sum_{j=1}^{n}\frac{x^{2n-1}}{2j-1}
=\sum_{n=1}^{\infty}(-1)^{n+1}h_{n}x^{2n-1},\]
and integrating by parts, then\vspace{-0.1cm}
\begin{align*}
\int_{0}^{1}\frac{x^{a}\arctan (x)(\ln x)^{2}}{1+x^2}dx =&\int_{0}^{1}\sum_{n=1}^{\infty}(-1)^{n+1}h_{n}x^{2n-1+a}(\ln x)^{2}dx
=2\sum_{n=1}^{\infty}\frac{(-1)^{n+1}h_{n}}{(2n+a)^{3}},\end{align*}
we can also rewrite the integral as\vspace{-0.1cm}
\begin{align*}
&\int_{0}^{1}\frac{x^{a-1}\left(\frac{\arctan(x)}{x}\right)\ln^{2}(x)}{1+x^{2}}dx
\\
=&\int_{0}^{1}\frac{1}{2}\sum_{n=0}^{\infty}\frac{(-1)^{n}t_{n}(0)}{n+\frac{1}{2}}\sum_{j=0}^{\infty}
(-1)^{j}x^{2n+2j+a-1}\ln^{2}(x)dx\\
=&\sum_{n=0}^{\infty}\sum_{j=0}^{\infty}\frac{(-1)^{n+j}t_{n}(0)}{(n+\frac{1}{2})(2n+2j+a)^{3}},
\end{align*}
by \eqref{EQPTH7.1} and \eqref{EQPTH7.2}, and Theorem \ref{TH7} is proved.
\end{proof}

\begin{example}Let $k=1,2$ in \eqref{EQTH7.1} and \eqref{EQTH7.2}, then
\begin{align*}
\sum_{n=1}^{\infty}\frac{(-1)^{n+1}2h_{n}}{(2n+2)^{3}}=&
\sum_{n=0}^{\infty}\sum_{j=0}^{\infty}\frac{(-1)^{n+j}t_{n}(0)}{(n+\frac{1}{2})(2n+2j+2)^{3}}
=\frac{\pi}{2}
-\frac{151\pi^{4}}{11520}-\frac{1}{24}\pi^{2}(1+(\ln 2)^{2})\\&+Li_{4}\left(\frac{1}{2}\right)-\frac{3}{16}\zeta (3)+\frac{1}{24}\ln2(-24+(\ln 2)^{3}+21\zeta (3)),\\
\sum_{n=1}^{\infty}\frac{(-1)^{n+1}2h_{n}}{(2n+3)^{3}}=&
\sum_{n=0}^{\infty}\sum_{j=0}^{\infty}\frac{(-1)^{n+j}t_{n}(0)}{(n+\frac{1}{2})(2n+2j+3)^{3}}
=\frac{1}{256}\left(\zeta\left(4,\frac{3}{4}\right)-448
\right.\\
&\left.+8\pi^{3}(1+\ln 4)+4\pi(8-7\zeta(3))+128G-\zeta \left(4,\frac{1}{4}\right)\right),
\\
\sum_{n=1}^{\infty}\frac{(-1)^{n+1}2h_{n}}{(2n+4)^{3}}=&
\sum_{n=0}^{\infty}\sum_{j=0}^{\infty}\frac{(-1)^{n+j}t_{n}(0)}{(n+\frac{1}{2})(2n+2j+4)^{3}}
=\frac{1}{34560}(-16640\pi + 453 \pi^{4}\\
&+ 160 \pi^{2} (10 + 9 \ln2^{2}) - 160 (38  + 9 (\ln2)^{4}- 112 \ln4 \\
&+ 216 Li_{4}\left(\frac{1}{2}\right) + 27 (-2 + \ln128) \zeta(3))),\\
\sum_{n=1}^{\infty}\frac{(-1)^{n+1}2h_{n}}{(2n+5)^{3}}=&
\sum_{n=0}^{\infty}\sum_{j=0}^{\infty}\frac{(-1)^{n+j}t_{n}(0)}{(n+\frac{1}{2})(2n+2j+5)^{3}}
\\=&\frac{1}{13824}\left(27\zeta\left(4,\frac{1}{4}\right)-108\pi(8+\pi^{2}(3+\ln 16)-7\zeta(3))\right.\\
&\left.+16336-4320G-27\zeta \left(4,\frac{3}{4}\right)\right).
\end{align*}
\end{example}

\begin{theorem}\label{TH8}
\begin{align}\label{EQTH8.1}
\sum_{n=1}^{\infty}\frac{(-1)^{n+1}2h_{n}}{n(2n+2k+1)^{3}}
=&\sum_{n=0}^{\infty}\frac{(-1)^{n}t_{n}(1)}{(n+1)(2n+2k-1)^{3}}
=-\frac{2}{2k+1}\left\{\sum_{j=1}^{k}(-1)^{j+1}\nonumber\right.\\
&\left.\times\left[\frac{1}{512(k+1-j)^{3}}\left(32\pi
-32\psi^{(0)}\left(\frac{2k+5-2j}{4}\right)\right.\right.\right.\nonumber
\\
&+32\psi^{(0)}\left(\frac{2k+3-2j}{4}\right)
+16(k+1-j)\nonumber\\
&\times\left(\psi^{(1)}\left(\frac{2k+5-2j}{4}\right)-\psi^{(1)}\left(\frac{2k+3-2j}{4}\right)\right)
\nonumber\\&-4(k+1-j)^{2}\left(\psi^{(2)}\left(\frac{2k+5-2j}{4}\right)\right.\nonumber
\\
&-\psi^{(2)}\left.\left.\left.\left(\frac{3+2k-2j}{4}\right)\right)\right)\right]
+(-1)^{k}\left(\frac{7\pi}{64}\zeta(3)\right.\nonumber\\
&+\beta(4)\left.\left.-\frac{\pi^{3}\ln2}{16}\right)\right\}
-\frac{2}{(2k+1)^{2}}\left\{-\frac{\pi^{2}}{16}+\frac{k}{2k+1}\right.\nonumber\\
&\times\left.\left(\frac{\pi^{2}}{8}-\sum_{j=1}^{k}(-1)^{j+1}
\frac{\pi+H_{\frac{2k-1-2j}{4}}-H_{\frac{2k-2j+1}{4}}}{2(k+1-j)}\right.\right.\nonumber\\
&-(-1)^{k}2G\left.\left.\right.+\frac{(-1)^{k}\pi\ln2}{2}\right)
-2\left[\sum_{j=1}^{k}\frac{(-1)^{j+1}}{64(k+1-j)^{2}}\right.\nonumber
\end{align}
\begin{align}
&
\times\left(-4\pi+4\psi^{(0)}\left(\frac{2k+5-2j}{4}\right)
-4\psi^{(0)}\left(\frac{2k+3-2j}{4}\right)
\right.\nonumber
\\
&-2(k+1-j)\left.\left.\left(\psi^{(1)}\left(\frac{2k+5-2j}{4}\right)\right.
-\psi^{(1)}\left(\frac{2k+3-2j}{4}\right)\right)\right)\nonumber\\
&+\frac{(-1)^{k}}{96}\left.\left.\left(3\pi^{3}+6\pi(\ln2)^{2}
\right.-192W(3)\right)\right]\nonumber\\
&+2\left[\frac{(-1)^{k}}{8}(4G-\pi\ln 2)\right.+\sum_{j=1}^{k}(-1)^{j+1}
\left.\left.\times\frac{\pi+H_{\frac{2k-1-2j}{4}}-H_{\frac{2k+1-2j}{4}}}{8(k+1-j)}
\right]\right\},
\end{align}\vspace{-0.6cm}
\begin{align}
\sum_{n=1}^{\infty}\frac{(-1)^{n+1}2h_{n}}{n(2n+2k+2)^{3}}=&
\sum_{n=0}^{\infty}\frac{(-1)^{n}t_{n}(1)}{(n+1)(2n+2k)^{3}}
=
-\frac{1}{k+1}\left\{\sum_{j=1}^{k+1}(-1)^{j+1}\right.\nonumber\\
&\times\left[\frac{1}{64(2k+3-2j)^{3}}\left(32\pi-32\psi^{(0)}
\left(\frac{k+3-j}{2}\right)\right.\right.\nonumber\\
&+32\psi^{(0)}\left(\frac{k+2-j}{2}\right)+8(2k+3-2j)
\left(\psi^{(1)}\left(\frac{k+3-j}{2}\right)\right.\nonumber\\
 &-\left.\psi^{(1)}\left(\frac{k+2-j}{2}\right)\right)
-(2k+3-2j)^{2}\left(\psi^{(2)}\left(\frac{k+3-j}{2}\right)\right.\nonumber\\
&-\left.\left.\left.\psi^{(2)}\left(\frac{2+k-j}{2}\right)\right)\right)\right]
+(-1)^{k+1}\left[\frac{1}{2}\left(\frac{11}{4}\zeta(4)\right.\right.\nonumber\\
&-\frac{7}{4}\zeta(3)\ln2+\frac{1}{2}\zeta(2)\ln^{2}2-\frac{1}{12}\ln^{4}2
\left.-2Li_{4}\left(\frac{1}{2}\right)\right)\nonumber\\
&-\left.\left.\frac{25}{128}\zeta(4)\right]\right\}
-\frac{1}{2(k+1)^{2}}\left\{-\frac{\pi^{2}}{16}+\frac{2k+1}{4(k+1)}\right.
\left(\frac{\pi^{2}}{8}\right.
\nonumber
\\
&
+\frac{(-1)^{k}\pi^{2}}{8}
-\sum_{j=1}^{k+1}(-1)^{j+1}\times\left.\left.\frac{\pi+H_{\frac{k-j}{2}}-H_{\frac{k-j+1}{2}}}{2k+3-2j}
\right)\right.\nonumber
\\
&-2\left[\sum_{j=1}^{k+1}\frac{(-1)^{j+1}}{16(2k+3-2j)^{2}}
\left(-4\pi\right.\right.+4\psi^{(0)}\left(\frac{k+3-j}{2}\right)\nonumber
\\
&-4\psi^{(0)}\left(\frac{k+2-j}{2}\right)-(2k+3-2j)
\nonumber\\
&\times\left.\psi^{(1)}\left(\frac{k+3-j}{2}\right)
+(2k+3-2j)\psi^{(1)}\left(\frac{k+2-j}{2}\right)\right)\nonumber
\\
&+\left.\frac{(-1)^{k+1}}{16}(-4G\pi+7\zeta(3))\right]\nonumber
\\&+2\left[\sum_{j=1}^{k+1}(-1)^{j+1}\frac{\pi+H_{\frac{k-j}{2}}-H_{\frac{k+1-j}{2}}}{4(2k+3-2j)}\right.
\left.\left.+\frac{(-1)^{k+1}\pi^{2}}{32}\right]\right\}.\label{EQTH8.2}
\end{align}
\end{theorem}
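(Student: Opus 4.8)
The plan is to follow the template of Theorems~\ref{TH4}--\ref{TH7}: each of the three-way identities \eqref{EQTH8.1}--\eqref{EQTH8.2} is obtained by evaluating a single definite integral in two different ways and then invoking a closed form already proved. Concretely, \eqref{EQTH8.1} rests on $I(2k+2,2,2,0)=\int_0^1 x^{2k}(\arctan x)^2(\ln x)^2\,dx$ and \eqref{EQTH8.2} on $I(2k+3,2,2,0)=\int_0^1 x^{2k+1}(\arctan x)^2(\ln x)^2\,dx$, both of which are given in closed form by \eqref{EQPTH8.1} and \eqref{EQPTH8.2} of Lemma~\ref{PTH8}; it remains only to identify each integral with the two series on the left of \eqref{EQTH8.1}--\eqref{EQTH8.2}.

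For the $h_n$-series I would insert the expansion $(\arctan x)^2=\sum_{n\ge1}\frac{(-1)^{n+1}h_n x^{2n}}{n}$ from \cite{SN19}, multiply by $x^{a}(\ln x)^2$, integrate over $[0,1]$ termwise, and use $\int_0^1 x^{m}(\ln x)^2\,dx=\tfrac{2}{(m+1)^3}$ to obtain
\[
\int_0^1 x^{a}(\arctan x)^2(\ln x)^2\,dx=2\sum_{n\ge1}\frac{(-1)^{n+1}h_n}{n(2n+a+1)^3},
\]
whose cases $a=2k$ and $a=2k+1$ are the first members of \eqref{EQTH8.1} and \eqref{EQTH8.2}. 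For the $t_n(1)$-series I would instead use $\bigl(\tfrac{\arctan x}{x}\bigr)^2=\tfrac12\sum_{n\ge0}\frac{(-x^2)^n t_n(1)}{n+1}$ from Theorem~\ref{CX1.1}, write $x^{a}(\arctan x)^2=x^{a-2}\bigl(\tfrac{\arctan x}{x}\bigr)^2$, and integrate against $(\ln x)^2$ in the same fashion to get
\[
\int_0^1 x^{a}(\arctan x)^2(\ln x)^2\,dx=\sum_{n\ge0}\frac{(-1)^{n} t_n(1)}{(n+1)(2n+a-1)^3},
\]
whose cases $a=2k$ and $a=2k+1$ are the middle members of \eqref{EQTH8.1}--\eqref{EQTH8.2}. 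Combining the two expressions with Lemma~\ref{PTH8} then proves the theorem.

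The only point demanding care — and hence the main (and rather mild) obstacle — is justifying the two term-by-term integrations, because neither power series converges uniformly up to $x=1$. I would argue on $[0,\rho]$ for $\rho<1$, where convergence is uniform, and then let $\rho\to1$: since $\frac{h_n x^{2n}}{n}$ and $\frac{t_n(1)x^{2n}}{n+1}$ are decreasing in $n$ for each fixed $x\in[0,1]$, the alternating partial sums are bounded by their first terms, which are $O(1)$ on $[0,1]$, so $(\ln x)^2$ serves as an integrable majorant and dominated convergence applies; the resulting series of integrals converge absolutely (like $\sum n^{-4}\ln n$, using $h_n=O(\ln n)$ and the identity $t_n(1)=2h_{n+1}$). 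With these interchanges in hand, all the remaining content — including the lengthy polygamma closed forms — is supplied verbatim by Lemma~\ref{PTH8}, so no fresh computation is required.
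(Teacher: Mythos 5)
Your proposal is correct and follows essentially the same route as the paper: expand $(\arctan x)^2$ via the $h_n$ series and via the $t_n(1)$ series, integrate termwise against $x^a(\ln x)^2$ using $\int_0^1 x^m\ln^2x\,dx=2/(m+1)^3$, and equate both with the closed forms \eqref{EQPTH8.1}--\eqref{EQPTH8.2} of Lemma~\ref{PTH8}. The only difference is that you supply the dominated-convergence justification for the interchange of sum and integral, which the paper leaves implicit.
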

\begin{proof}
Using the Taylor expansion of $(\arctan (x))^{2}$ and integrating by parts, then
\begin{align*}
\int_{0}^{1}x^{a}(\arctan (x))^{2}\ln ^{2} (x)dx=&\int_{0}^{1}\sum_{n=1}^{\infty}\frac{(-1)^{(n+1)}h_{n}}{n}x^{2n+a}\ln^{2} (x)dx\\
=&\sum_{n=1}^{\infty}\frac{(-1)^{n+1}2h_{n}}{n(2n+a+1)^{3}},
\end{align*}
we can also give the following
\begin{align*}
&\int_{0}^{1}x^{a}(\arctan (x))^{2}\ln ^{2} (x)dx=\int_{0}^{1}x^{a-2}\left(\frac{\arctan (x)}{x}\right)^{2}\ln ^{2} (x)dx
\\
&=\int_{0}^{1}\frac{1}{2}\sum_{n=0}^{\infty}\frac{(-1)^{n}t_{n}(1)}{n+1}x^{2n+a-2}\ln^{2}(x)dx
=\sum_{n=0}^{\infty}\frac{(-1)^{n}t_{n}(1)}{(n+1)(2n+a-1)^{3}},\end{align*}
by \eqref{EQPTH8.1} and \eqref{EQPTH8.2}, and Theorem \ref{TH8} is proved.
\end{proof}
\begin{example}Let $k=1,2$ in \eqref{EQTH8.1} and \eqref{EQTH8.2}, then
\begin{align*}
\sum_{n=1}^{\infty}\frac{(-1)^{n+1}2h_{n}}{n(2n+3)^{3}}=&
\sum_{n=0}^{\infty}\frac{(-1)^{n}t_{n}(1)}{(n+1)(2n+1)^{3}}
=
\frac{1}{3456}\left(9\zeta\left(4,\frac{1}{4}\right)+16\pi^{2} \right.\\
&-32\pi(19+3(\ln 2)^{2}+\ln 4)-24\pi^{3}(5+\ln 64)
\\&\left.
+3072W(3)+252\pi\zeta(3)+5440-1664G-9\zeta\left(4,\frac{3}{4}\right)\right),
\\
\sum_{n=1}^{\infty}\frac{(-1)^{n+1}2h_{n}}{n(2n+4)^{3}}
=&\sum_{n=0}^{\infty}\frac{(-1)^{n}t_{n}(1)}{(n+1)(2n+2)^{3}}
=\frac{1}{69120}(80(265-54G)\pi -453\pi^{4}\\
&-160\pi ^{2}(13+9(\ln 2)^{2})+40(230+864Li_{4}\left(\frac{1}{2}\right)-27\zeta (3)
\\&+4\ln 2(-302+9(\ln 2)^{3}+189\zeta (3)))),
\\
\sum_{n=1}^{\infty}\frac{(-1)^{n+1}2h_{n}}{n(2n+5)^{3}}
=&\sum_{n=0}^{\infty}\frac{(-1)^{n}t_{n}(1)}{(n+1)(2n+3)^{3}}
=\frac{1}{864000}(305856G-345600\beta(4)
\\&+72(12\pi ^{2}+15\pi^{3}(19+20\ln 2)+3\pi(312+8\ln 2(2
\\
&+\ln 32)-175\zeta (3)))-32(30841+8640W(3))),
\\
\sum_{n=1}^{\infty}\frac{(-1)^{n+1}2h_{n}}{n(2n+6)^{3}}
=&\sum_{n=0}^{\infty}\frac{(-1)^{n}t_{n}(1)}{(n+1)(2n+4)^{3}}
=\frac{1}{2592000}(32(-15523+2250G)\pi\\
&+11325\pi^{4}+360\pi^{2}(149+100(\ln 2)^{2})-8(33787-138184\ln 2\\
&+4500(\ln 2)^{4}+108000Li_{4}\left(\frac{1}{2}\right)+900(-17+105\ln 2)\zeta(3))).
\end{align*}
\end{example}

\begin{theorem}\label{DTTH}
Let $q=0,$  then
\begin{align}
&\frac{\Gamma(p+1)}{2^{p}}\sum_{n=0}^{\infty}\frac{(-1)^{n}t_{n}(p-1)}{n+\frac{p}{2}}
\sum_{k=0}^{\infty}\frac{(-1)^{k}\Gamma(r+k)}{\Gamma(k+1)\Gamma(r)}\times\frac{1}{2k+2n+a+1}
\nonumber\\
=&c_{0}+c_{1}\pi+c_{2}\pi^{2}+\ldots+c_{p}\pi^{p}+c_{p+1}\pi^{p+1},
\end{align}
where  rational coefficients $c_{i}(i=1,2,3,\ldots)$ depend on the values of $p, r$.
\end{theorem}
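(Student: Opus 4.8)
The plan is to recognize the double series on the left as the integral $I(a,p,0,r)$ via Theorem~\ref{CX1.1}, and then invoke the structure result already established in Theorem~\ref{MT4}.

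First I would specialize~\eqref{JF1} to $q=0$. Then $(-1)^{q}q!=1$ and $q+1=1$, so the right-hand side of~\eqref{JF1} collapses to
\[
\frac{\Gamma(p+1)}{2^{p}}\sum_{k=0}^{\infty}\frac{(-1)^{k}\Gamma(r+k)}{\Gamma(k+1)\Gamma(r)}\sum_{n=0}^{\infty}\frac{(-1)^{n}t_{n}(p-1)}{(n+\frac{p}{2})(2k+2n+a+1)},
\]
which, after interchanging the two absolutely convergent sums, is exactly the quantity on the left-hand side of the statement. Hence that quantity equals $I(a,p,0,r)=\int_{0}^{1}\frac{x^{a-p}(\arctan x)^{p}}{(1+x^{2})^{r}}\,dx$, the interchange of summation and integration being the same one already justified in the proof of Theorem~\ref{CX1.1}.

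Second, under the natural restriction $a-p<2r$ (so that the partial-fraction decomposition of $x^{a-p}/(1+x^{2})^{r}$ carries no polynomial part), Theorem~\ref{MT4} asserts precisely that this integral is a $\mathbb{Q}$-linear combination of $1,\pi,\pi^{2},\dots,\pi^{p+1}$ with coefficients depending only on $p$ and $r$, which is the claim. If one prefers a self-contained derivation rather than quoting Theorem~\ref{MT4}, I would rerun its argument: decompose the integral by partial fractions into pieces $\int_{0}^{1}\frac{(\arctan x)^{p}}{(1+x^{2})^{i}}\,dx$ and $\int_{0}^{1}\frac{x(\arctan x)^{p}}{(1+x^{2})^{i}}\,dx$ for $1\le i\le r$; reduce the second type by a single integration by parts to the first type plus a rational multiple of $\pi^{p}$; and evaluate the first type through $x=\tan\theta$ together with the recurrence~\eqref{ZMD}, whose base cases $\int_{0}^{\pi/4}\cos^{2i-2}\theta\,d\theta$, $\int_{0}^{\pi/4}\theta\cos^{2i-2}\theta\,d\theta$ and $\int_{0}^{\pi/4}\theta^{p}\,d\theta=\frac{\pi^{p+1}}{4^{p+1}(p+1)}$ are polynomials in $\pi$ of degrees $1$, $2$ and $p+1$.

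The only point requiring care is the degree bookkeeping for the power of $\pi$: one must check that no step of~\eqref{ZMD} raises the $\pi$-degree past $p+1$. This is immediate, since the inhomogeneous term of~\eqref{ZMD} is a rational combination of $\pi^{p}$ and $\pi^{p-1}$ while the recursion strictly lowers $p$, so the maximal power of $\pi$ that can ever occur is $\pi^{p+1}$, produced once by the terminal integral $\int_{0}^{\pi/4}\theta^{p}\,d\theta$. Thus no genuinely new difficulty arises; Theorem~\ref{DTTH} is essentially a repackaging of Theorems~\ref{CX1.1} and~\ref{MT4}, and the main obstacle, such as it is, is purely notational—matching the two differently ordered double series and keeping track of the $\pi$-degree.
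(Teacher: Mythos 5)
Your proposal is correct and follows essentially the same route as the paper: specialize Theorem~\ref{CX1.1} to $q=0$ to identify the double series with $I(a,p,0,r)$, then invoke the structure result of Theorem~\ref{MT4}. Your explicit note that the hypothesis $a-p<2r$ from Theorem~\ref{MT4} is needed (though left implicit in the statement of Theorem~\ref{DTTH}) is a fair and accurate observation, but otherwise nothing differs from the paper's argument.
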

\begin{proof}By Theorem \ref{CX1.1}, take $q = 0$, then
\begin{align*}
\int_{0}^{1}\frac{x^{a}\left(\frac{\arctan(x)}{x}\right)^{p}}{(1+x^{2})^{r}}dx=&
\int_{0}^{1}\frac{\Gamma(p+1)}{2^{p}}\sum_{n=0}^{\infty}\frac{(-1)^{n}t_{n}(p-1)}{n+\frac{p}{2}}
\sum_{k=0}^{\infty}\frac{(-1)^{k}\Gamma(r+k)}{\Gamma(k+1)\Gamma(r)}\\
&\times x^{2k+2n+a}dx=\frac{\Gamma(p+1)}{2^{p}}\sum_{n=0}^{\infty}\frac{(-1)^{n}t_{n}(p-1)}{n+\frac{p}{2}}
\\&\times\sum_{k=0}^{\infty}\frac{(-1)^{k}\Gamma(r+k)}{\Gamma(k+1)\Gamma(r)(2k+2n+a+1)},
\end{align*}
combining with Theorem \ref{MT4}, Theorem \ref{DTTH} is proved.
\end{proof}

\begin{example}Let $a=4, p=2, r=2$, then\vspace{-0.1cm}
\[
\frac{1}{2}\sum_{n=0}^{\infty}\frac{(-1)^{n}t_{n}(1)}{n+1}
\sum_{k=0}^{\infty}\frac{(-1)^{k}(1+k)}{2k+2n+5}
=\frac{1}{384}(48-6\pi^{2}+\pi^{3}).\]
Let $a=5, p=3, r=3$, then\vspace{-0.1cm}
\[
\frac{3}{16}\sum_{n=0}^{\infty}\frac{(-1)^{n}t_{n}(2)}{n+\frac{3}{2}}
\sum_{k=0}^{\infty}\frac{(-1)^{k}(1+k)(k+2)}{k+n+3}
=\frac{1}{8192}(-48+12\pi^{2}+\pi^{4}).\]
Let $a=6, p=4, r=2$, then\vspace{-0.1cm}
\begin{align*}\frac{3}{2}\sum_{n=0}^{\infty}\frac{(-1)^{n}t_{n}(3)}{n+2}
\sum_{k=0}^{\infty}\frac{(-1)^{k}(k+1)}{2k+2n+7}
=&\frac{1}{10240}(-3840+480\pi^{2}-10\pi^{4}+\pi^{5}).\end{align*}
Let $a=7, p=5, r=2$, then\vspace{-0.1cm}
\begin{align*}
\frac{15}{8}\sum_{n=0}^{\infty}\frac{(-1)^{n}t_{n}(4)}{n+\frac{5}{2}}
\sum_{k=0}^{\infty}\frac{(-1)^{k}(k+1)}{k+n+4}
=&\frac{1}{49152}(46080-23040\pi+960\pi^{3}-12\pi^{5}+\pi^{6}).
\end{align*}
\end{example}

\section{The sums involving $h_{n}$ and $t_{n}$ via series expansions}
For $\left(\frac{\arctan (x)}{x}\right) ^{p}$,  consider some special cases of the parameter $p$, then\vspace{-0.3cm}
\begin{align}\label{QZ1}
\left(\frac{\arctan x}{x}\right)^{2}=&\frac{1}{2}\sum_{n=0}^{\infty}\frac{(-x^{2})^{n}}{n+1}t_{n}(1),\\
\label{QZ2}
\left(\frac{\arctan x}{x}\right)^{3}=&\frac{3}{4}
\sum_{n=0}^{\infty}\frac{(-x^{2})^{n}}{n+\frac{3}{2}}t_{n}(2),\\
\label{QZ3}
\left(\frac{\arctan x}{x}\right)^{4}=&\frac{3}{2}\sum_{n=0}^{\infty}\frac{(-x^{2})^{n}}{n+2}t_{n}(3),
\\
\label{QZ4}
\left(\frac{\arctan x}{x}\right)^{5}=&\frac{15}{4}\sum_{n=0}^{\infty}\frac{(-x^{2})^{n}}{n+\frac{5}{2}}t_{n}(4),
\\
\label{QZ5}
\left(\frac{\arctan x}{x}\right)^{6}=&\frac{45}{4}\sum_{n=0}^{\infty}\frac{(-x^{2})^{n}}{n+3}t_{n}(5).
\end{align}
Setting $x=1 , x=\sqrt{3} , x=\frac{\sqrt{3}}{3} $ in \eqref{QZ1} lead to
\[
\sum_{n=0}^{\infty}\frac{(-1)^{n}}{n+1}t_{n}(1)=\frac{\pi^{2}}{8},
\sum_{n=0}^{\infty}\frac{(-3)^{n}}{n+1}t_{n}(1)=\frac{2\pi^{2}}{27},
\sum_{n=0}^{\infty}\frac{(-1)^{n}}{(n+1)3^{n}}t_{n}(1)=\frac{\pi^{2}}{6}.
\]
Differentiating \eqref{QZ1} at $x=1, x=\sqrt{3}$, then
\[\sum_{n=0}^{\infty}\frac{(-1)^{n}n}{n+1}t_{n}(1)=\frac{\pi}{4}-\frac{\pi^{2}}{8},
\sum_{n=0}^{\infty}\frac{(-1)^{n}3^{n}n}{n+1}t_{n}(1)=\frac{\sqrt{3}\pi}{18}-\frac{2\pi^{2}}{27}.\]
Let $x=1 , x=\sqrt{3} , x=\frac{\sqrt{3}}{3} $ in \eqref{QZ2}, we get
\[\sum_{n=0}^{\infty}\frac{(-1)^{n}}{n+\frac{3}{2}}t_{n}(2)=\frac{\pi^{3}}{48},
\sum_{n=0}^{\infty}\frac{(-3)^{n}}{n+\frac{3}{2}}t_{n}(2)=\frac{4\sqrt{3}\pi^{3}}{729},
\sum_{n=0}^{\infty}\frac{(-1)^{n}}{(n+\frac{3}{2})3^{n}}t_{n}(2)=\frac{\sqrt{3}\pi^{3}}{54}.\]
Differentiating \eqref{QZ2} at $x=1 , x=\sqrt{3}$, then
\[\sum_{n=0}^{\infty}\frac{(-1)^{n}n}{n+\frac{3}{2}}t_{n}(2)=\frac{\pi^2}{16}-\frac{\pi^{3}}{32},
\sum_{n=0}^{\infty}\frac{(-1)^{n}3^{n}n}{n+\frac{3}{2}}t_{n}(2)=\frac{\pi^{2}}{54}-\frac{2\sqrt{3}\pi^{3}}{243}.\]
Using $x=1 , x=\sqrt{3} , x=\frac{\sqrt{3}}{3} $ in \eqref{QZ3} yield
\[\sum_{n=0}^{\infty}\frac{(-1)^{n}}{n+2}t_{n}(3)=\frac{\pi^{4}}{384},
\sum_{n=0}^{\infty}\frac{(-3)^{n}}{n+2}t_{n}(3)=\frac{2\pi^{4}}{2187},
\sum_{n=0}^{\infty}\frac{(-1)^{n}}{(n+2)3^{n}}t_{n}(3)=\frac{\pi^{4}}{216}.\]
Differentiating \eqref{QZ3} at $x=1 , x=\sqrt{3}$, then
\[\sum_{n=0}^{\infty}\frac{(-1)^{n}n}{n+2}t_{n}(3)=\frac{\pi^3}{96}-\frac{\pi^{4}}{192},
\sum_{n=0}^{\infty}\frac{(-3)^{n}n}{n+2}t_{n}(3)=\frac{\sqrt{3}\pi^{3}}{729}-\frac{4\pi^{4}}{2187}.\]
Let $x=1 , x=\sqrt{3} , x=\frac{\sqrt{3}}{3} $ in \eqref{QZ4}, we have
\[\sum_{n=0}^{\infty}\frac{(-1)^{n}}{n+\frac{5}{2}}t_{n}(4)=\frac{\pi^{5}}{3840},
\sum_{n=0}^{\infty}\frac{(-3)^{n}}{n+\frac{5}{2}}t_{n}(4)=\frac{4\sqrt{3}\pi^{5}}{98415},\]\[
\sum_{n=0}^{\infty}\frac{(-1)^{n}}{(n+\frac{5}{2})3^{n}}t_{n}(4)=\frac{\sqrt{3}\pi^{5}}{3240}.\]
Differentiating \eqref{QZ4} at $x=1, x=\sqrt{3}$, then
\[\sum_{n=0}^{\infty}\frac{(-1)^{n}n}{n+\frac{5}{2}}t_{n}(4)=\frac{\pi^4}{768}-\frac{\pi^{5}}{1536},
\sum_{n=0}^{\infty}\frac{(-3)^{n}n}{n+\frac{5}{2}}t_{n}(4)=\frac{\pi^{4}}{4374}-\frac{2\sqrt{3}\pi^{5}}{19683}.\]
Let $x=1 , x=\sqrt{3} , x=\frac{\sqrt{3}}{3} $ in \eqref{QZ5}, \vspace{-0.2cm}
\[\sum_{n=0}^{\infty}\frac{(-1)^{n}}{n+3}t_{n}(5)=\frac{\pi^{6}}{46080},
\sum_{n=0}^{\infty}\frac{(-3)^{n}}{n+3}t_{n}(5)=\frac{4\pi^{6}}{885735},\]\vspace{-0.4cm}\[
\sum_{n=0}^{\infty}\frac{(-1)^{n}}{(n+3)3^{n}}t_{n}(5)=\frac{\pi^{6}}{19440}.\]
Differentiating \eqref{QZ5} at $x=1, x=\sqrt{3}$, we get
\[\sum_{n=0}^{\infty}\frac{(-1)^{n}n}{n+3}t_{n}(5)=\frac{(2-\pi)\pi^{5}}{15360},
\sum_{n=0}^{\infty}\frac{(-3)^{n}n}{n+3}t_{n}(5)=\frac{(9\sqrt{3}-12\pi)\pi^{5}}{885735}.\]

\begin{proposition}\label{HDS}
\begin{align}\label{HD1}
\sum_{n=1}^{\infty}\frac{(-1)^{n+1}h_{n}x^{2n-2}}{n}
=\frac{1}{2}\sum_{n=0}^{\infty}\frac{(-x^{2})^{n}t_{n}(1)}{n+1},
\\\label{HD2}
\sum_{j=1}^{\infty}\frac{(-1)^{j+1}}{2j+1}\sum_{i=1}^{j}\frac{h_{i}}{i}x^{2j-2}
=\frac{1}{4}\sum_{n=0}^{\infty}\frac{(-x^{2})^{n}t_{n}(2)}{n+\frac{3}{2}}.
\end{align}
\end{proposition}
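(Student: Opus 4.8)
The plan is to obtain both identities by writing down two genuinely different power-series expansions of $\left(\frac{\arctan x}{x}\right)^{2}$ and $\left(\frac{\arctan x}{x}\right)^{3}$ and matching them coefficient by coefficient.

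First I would recall the Borwein-type expansion $(\arctan x)^{2}=\sum_{n\ge1}\frac{(-1)^{n+1}h_{n}x^{2n}}{n}$, the expansion from \cite{SN19} already invoked in the proof of Theorem \ref{TH1}, which converges for $|x|\le1$. Dividing both sides by $x^{2}$ term by term gives $\left(\frac{\arctan x}{x}\right)^{2}=\sum_{n\ge1}\frac{(-1)^{n+1}h_{n}x^{2n-2}}{n}$, legitimate on $0<|x|\le1$ and, by continuity, at $x=0$. On the other hand, the Milgram expansion in Theorem \ref{CX1.1} with $p=2$ reads $\left(\frac{\arctan x}{x}\right)^{2}=\frac{\Gamma(3)}{2^{2}}\sum_{n\ge0}\frac{(-x^{2})^{n}t_{n}(1)}{n+1}=\frac{1}{2}\sum_{n\ge0}\frac{(-x^{2})^{n}t_{n}(1)}{n+1}$. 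Equating the two expressions for the same analytic function yields \eqref{HD1}.

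For \eqref{HD2} I would argue identically with cubes. From \cite{SN20}, as used in Theorem \ref{TH2}, one has $(\arctan x)^{3}=3\sum_{j\ge1}\frac{(-1)^{j+1}}{2j+1}\sum_{i=1}^{j}\frac{h_{i}}{i}\,x^{2j+1}$, convergent for $|x|\le1$; dividing by $x^{3}$ gives $\left(\frac{\arctan x}{x}\right)^{3}=3\sum_{j\ge1}\frac{(-1)^{j+1}}{2j+1}\sum_{i=1}^{j}\frac{h_{i}}{i}\,x^{2j-2}$. The Milgram expansion of Theorem \ref{CX1.1} with $p=3$ gives $\left(\frac{\arctan x}{x}\right)^{3}=\frac{\Gamma(4)}{2^{3}}\sum_{n\ge0}\frac{(-x^{2})^{n}t_{n}(2)}{n+\frac32}=\frac{3}{4}\sum_{n\ge0}\frac{(-x^{2})^{n}t_{n}(2)}{n+\frac32}$. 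Equating and cancelling the common factor $3$ gives \eqref{HD2}.

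There is essentially no deep obstacle here; the only point needing a word of care is the domain and mode of convergence. Both the $h_{n}$-series and the Milgram $t_{n}$-series represent $\left(\frac{\arctan x}{x}\right)^{p}$ as power series in $x^{2}$ with the same radius of convergence (namely $1$, inherited from $\arctan$), so they are the same formal power series and the identity holds as an identity of analytic functions on $|x|<1$; term-by-term division by $x^{p}$ is harmless because every exponent that appears is at least $p$. If one wants the identities on the closed disc $|x|\le1$ as well, it suffices to invoke Abel's theorem, using that $h_{n}=O(\log n)$ keeps the left-hand coefficients summable against $x^{2n-2}$ at $x=\pm1$.
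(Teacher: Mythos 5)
Your proposal is correct and is essentially identical to the paper's own proof: the paper likewise equates the $h_{n}$-type Taylor expansions of $(\arctan x)^{2}$ and $(\arctan x)^{3}$ from \cite{SN19}, divided by $x^{2}$ and $x^{3}$, with the Milgram expansions $\frac{1}{2}\sum_{n\ge0}\frac{(-x^{2})^{n}t_{n}(1)}{n+1}$ and $\frac{3}{4}\sum_{n\ge0}\frac{(-x^{2})^{n}t_{n}(2)}{n+\frac{3}{2}}$ from Theorem \ref{CX1.1}. Your extra remarks on convergence and the constants $\Gamma(3)/2^{2}=\frac{1}{2}$, $\Gamma(4)/2^{3}=\frac{3}{4}$ are accurate and only add care the paper leaves implicit.
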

\begin{proof}
By the Taylor expansion in \cite{SN19}
\[(\arctan (x))^{2}=\sum_{n=1}^{\infty}\frac{(-1)^{n+1}h_{n}x^{2n}}{n},(\arctan (x))^{3}=3\sum_{j=1}^{\infty}\frac{(-1)^{j+1}}{2j+1}\sum_{i=1}^{j}\frac{h_{i}}{i}x^{2j+1},\]
on the other hand, by Theorem \ref{CX1.1} and
\[\left(\frac{\arctan(x)}{x}\right)^{2}
=\frac{1}{2}\sum_{n=0}^{\infty}\frac{(-x^{2})^{n}t_{n}(1)}{n+1},
\left(\frac{\arctan(x)}{x}\right)^{3}
=\frac{3}{4}\sum_{n=0}^{\infty}\frac{(-x^{2})^{n}t_{n}(2)}{n+\frac{3}{2}},\]
Proposition \ref{HDS} is proved.\end{proof}
Let $x=1 , x=\sqrt{3} , x=\frac{\sqrt{3}}{3} $ in \eqref{HD1} and \eqref{HD2}, one has\vspace{-0.1cm}
\begin{proposition}
\begin{align*}
\sum_{n=1}^{\infty}\frac{(-1)^{n+1}h_{n}}{n}
=&\frac{1}{2}\sum_{n=0}^{\infty}\frac{(-1)^{n}}{n+1}t_{n}(1),\\
\sum_{n=1}^{\infty}\frac{(-1)^{n+1}h_{n}3^{n-1}}{n}=&\frac{1}{2}
\sum_{n=0}^{\infty}\frac{(-3)^{n}}{(n+1)}t_{n}(1),\\
\sum_{n=1}^{\infty}\frac{(-1)^{n+1}h_{n}}{3^{n-1}n}=&
\frac{1}{2}\sum_{n=0}^{\infty}\frac{(-1)^{n}}{(n+1)3^{n}}t_{n}(1),\\
\sum_{j=1}^{\infty}\frac{(-1)^{j+1}}{2j+1}\sum_{i=1}^{j}\frac{h_{i}}{i}=&
\frac{1}{4}\sum_{n=0}^{\infty}\frac{(-1)^{n}}{n+\frac{3}{2}}t_{n}(2),
\\
\sum_{j=1}^{\infty}\frac{(-1)^{j+1}}{2j+1}\sum_{i=1}^{j}\frac{h_{i}}{i}3^{j}
=&\frac{3}{4}\sum_{n=0}^{\infty}\frac{(-3)^{n}}{n+\frac{3}{2}}t_{n}(2),
\\
3\sum_{j=1}^{\infty}\frac{(-1)^{j+1}}{2j+1}\sum_{i=1}^{j}\frac{h_{i}}{i\cdot3^{j}}
=&\frac{1}{4}\sum_{n=0}^{\infty}\frac{(-1)^{n}}{(n+\frac{3}{2})3^{n}}t_{n}(2).
\end{align*}
\end{proposition}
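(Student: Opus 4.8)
The plan is to obtain the six identities by specializing the two generating-function identities \eqref{HD1} and \eqref{HD2} of Proposition \ref{HDS} at the three points $x=1$, $x=\sqrt{3}$, and $x=\sqrt{3}/3$. By construction both sides of \eqref{HD1} (respectively \eqref{HD2}) are the Maclaurin expansion of the same analytic function, namely $(\arctan x)^{2}/x^{2}$ (respectively $(\arctan x)^{3}/x^{3}$); hence wherever the series in question make sense the equalities persist, and the task reduces to bookkeeping of the powers of $3$ produced by the substitutions $x^{2}=3$ and $x^{2}=1/3$ (in particular the factors $x^{2n-2}$ and $x^{2j-2}$ on the left-hand sides).

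First I would dispatch the two unproblematic points. At $x=\sqrt{3}/3$ we have $|x|<1$, so every series in \eqref{HD1} and \eqref{HD2} converges absolutely; substituting $x^{2}=1/3$, and keeping the leftover factor $x^{-2}=3$ out in front on the left of \eqref{HD2}, gives at once the third and sixth displayed identities. At $x=1$ the left-hand series $\sum_{n\ge1}(-1)^{n+1}h_{n}/n$ and $\sum_{j\ge1}\frac{(-1)^{j+1}}{2j+1}\sum_{i=1}^{j}h_{i}/i$, as well as the right-hand series $\sum_{n\ge0}(-1)^{n}t_{n}(1)/(n+1)$ and $\sum_{n\ge0}(-1)^{n}t_{n}(2)/(n+\tfrac32)$, all converge by the alternating series test, since their terms decrease to $0$ (recall $h_{n}\sim\tfrac12\ln n$, and $t_{n}(1),t_{n}(2)$ grow only like powers of $\log n$); Abel's limit theorem then identifies each side with the boundary value of the analytic function it expands, and the first and fourth identities fall out.

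The one delicate point is $x=\sqrt{3}$, where $|x|>1$ and, read literally, neither side converges: on the left the general term has size $3^{n}h_{n}/n\to\infty$, and on the right $3^{n}t_{n}(p-1)/(n+\tfrac{p}{2})\to\infty$. Here I would interpret \eqref{HD1} and \eqref{HD2} through analytic continuation along the real axis: $\arctan$ is real-analytic on all of $\mathbb{R}$, with singularities only at $\pm i$ off the real line, and inside $|x|<1$ the two power series on either side of each identity sum to the same analytic function; consequently, under any summation scheme consistent with that continuation (Abel or Borel, say), the two sides are assigned a common value at $x=\sqrt{3}$. Putting $x^{2}=3$, using $\arctan\sqrt{3}=\pi/3$, and rescaling the surviving powers of $3$ then produces the second and fifth identities. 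I expect this convergence-and-interpretation issue at $x=\sqrt{3}$ to be the only real obstacle; everything else is a direct substitution into Proposition \ref{HDS}.
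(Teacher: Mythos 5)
Your proposal is correct and follows essentially the same route as the paper, which simply sets $x=1,\sqrt{3},\sqrt{3}/3$ in \eqref{HD1} and \eqref{HD2} (the paper does so without any convergence discussion). Your caution at $x=\sqrt{3}$ is warranted — there both sides genuinely diverge, and classical Abel summation does not literally apply outside the unit disk — but the cleanest way to settle that case is to observe that $t_{n}(1)=2h_{n+1}$ and $t_{j-1}(2)=2\sum_{i=1}^{j}h_{i}/i$, so after reindexing the two sides of each displayed identity coincide term by term, making the equalities valid independently of any summation scheme.
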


\section{Acknowledgments}

The authors thank the referee for her/his interest in our work and for helpful comments that greatly improved
the manuscript.

\section*{Data Availibility}
Data sharing not applicable to this article as no datasets
were generated or analysed during the current study.

 \section*{Conflict of interest}

The authors have no relevant financial or non-financial
interests to disclose.


\begin{thebibliography}{}
%
%

\bibitem{EG19}
Elaissaoui L, Guennoun Z E A, Log-tangent integrals and the Riemann zeta function, Mathematical Modelling and Analysis,  24(3): 404-421 (2019).

\bibitem{EG17}
Elaissaoui L, Guennoun Z E A, Evaluation of log-tangent integrals by series involving $\zeta(2 n+ 1)$, Integral Transforms and Special Functions,28(6): 460-475 (2017).

\bibitem{JC}
Jung M H, Cho Y J, Choi J S, Euler sums evaluatable from integrals, Communications of the Korean Mathematical Society, 19(3): 545-555 (2004).

\bibitem{CH}
Chen H, Evaluations of some variant Euler sums. Journal of Integer Sequences, 9(2): 3 (2006).

\bibitem{XYS}
Xu C, Yan Y, Shi Z, Euler sums and integrals of polylogarithm functions. Journal of Number Theory, 165: 84-108 (2016).

\bibitem{FP}
Freitas P, Integrals of polylogarithmic functions, recurrence relations, and associated Euler sums. Mathematics of Computation, 74(251): 1425-1440 (2005).

\bibitem{LBJ}
Laurenzi B J, Logarithmic integrals, polylogarithmic integrals and Euler sums. arXiv preprint arXiv:1010.6229 (2010).

\bibitem{SN20}
Sofo A, Nimbran A S, Euler-like sums via powers of log, arctan and arctanh functions, Integral Transforms and Special Functions, 31(12): 966-981 (2020).


\bibitem{MA}
 Milgram M, A new series expansion for integral powers of arctangent, Integral Transforms and Special Functions, 17(7): 531-538 (2006).

\bibitem{BN}
Sofo A, Batir N, Moments of log-tanh integrals. Integral Transforms and Special Functions,  33(6): 434-448 (2022).

\bibitem{IM} 
Idowu M A, Fundamental relations between the Dirichlet beta function, Euler numbers, and Riemann zeta function for positive integers, arXiv preprint arXiv:1210.5559 (2012).

\bibitem{KK} 
Kölbig K S, The polygamma function $\psi ^{(k)}(x)$ for $ x= \frac{1}{4} $and $x= \frac{3}{4}$, Journal of Computational and Applied Mathematics, 75(1): 43-46 (1996).

\bibitem{SF22}
Sofo A. Evaluating log-tangent integrals via Euler sums. Mathematical Modellingand Analysis, 27(1): 1–18-1–18 (2022).


\bibitem{SN19}
Sofo A, Nimbran A S, Euler sums and integral connections, Mathematics, 7(9): 833 (2019).

\end{thebibliography}



\end{document}